%% file: main.tex
\documentclass[11pt]{article}

\usepackage[a4paper, total={6in, 8in}]{geometry}
\input{preamble.tex}

\usepackage{hyperref}

\usepackage{graphicx}

\title{Scaling Laws for Majority-based Opinion Dynamics in the Presence of Stubborn Agents}
\author{Luke Meredith and Arpan Mukhopadhyay\\
University of Warwick}

\date{}

\begin{document}
\maketitle

\begin{abstract}
In a multi-agent system, there are often stubborn followers of specific opinions or beliefs. Motivated by this observation, in this paper, we aim to understand how stubborn agents affect the distribution of opinions in a network where both stubborn and non-stubborn agents interact with each other. To do so, we assume that all agents have an opinion in the set $\{0,1\}$ and  each non-stubborn agent updates its opinion according to the $2k$\textit{-choices rule}, where the agent samples $2k$ neighbours (including both stubborn and non-stubborn neighbours) uniformly at random and adopts the majority opinion among the sampled group of neighbours and itself.  We assume that a proportion of agents, $\gamma_i$, are stubborn followers of opinion $i\in \{0,1\}$. It is natural to expect that the steady-state distribution of the opinions in the network will be dominated by the opinion with the larger proportion of stubborn followers. We show that while this is true, the time to reach steady-state depends heavily on the values of the parameters $\gamma_0$ and $\gamma_1$. When the individual values of these parameters, as well as their difference, are small, it can take an exponentially long time (in the network size) to reach the steady-state. In sharp contrast, when at least one of the parameters $\gamma_0$ and $\gamma_1$ is large, the network reaches the steady-state in a time that is only logarithmic in the network size. Hence, there exists a sharp phase transition in the network dynamics based on the proportions of stubborn agents. We also characterise the behaviour of the system when the parameters $\gamma_0$ and $\gamma_1$ lie on the boundary of the phase transition. In this boundary region, we show using Stein's method that the dynamics are driven by a diffusion process which takes polynomial time to mix.

\end{abstract}


\section{Introduction}

Consensus formation in multi-agent systems is a central topic across statistical physics \cite{castellano2009statistical} and computer science \cite{byzantine_generals}. Research in this domain investigates how collective behaviour emerges from simple local interactions, and how factors like noise, bias, faults, and communication constraints shape these dynamics \cite{noisy_voter_model, pineda2009noisy}. In this paper, we analyse how \textit{stubborn agents} \cite{yildiz2011discrete} or \textit{zealots} \cite{mobilia2003does}, whose opinions remain permanently fixed, affect  consensus formation. This is motivated by both social networks, where certain individuals hold immovable beliefs or fixed roles~\cite{acemouglu2013opinion}, and distributed systems, where {\em stuck-at faults} can occur, resulting in some nodes' states being frozen permanently~\cite{finkbeiner2015detecting}.

Specifically, we consider the $2k$\textit{-choices rule}, a generalisation of the \textit{2-choices rule}, which is a well-studied distributed consensus protocol \cite{cooper_two_choices}. We assume that all agents/nodes initially have opinions in the set $\{0,1\}$. Furthermore, a fraction $\gamma_i$ of {\em stubborn} agents have a fixed opinion $i\in \{0,1\}$ at all times. The non-stubborn agents update their opinions according to independent unit-rate Poisson processes with the $2k$-choices algorithm, where an agent samples $2k$ agents from their neighbourhood uniformly at random (including both stubborn and non-stubborn agents), and then updates their own opinion to the majority opinion among the sampled agents and itself. Unlike in the classical $2k$-choices dynamics, a consensus state is no longer absorbing for these modified dynamics due to the presence of stubborn agents. It is natural to expect that, at steady-state, the opinion with the larger number of stubborn followers will hold the majority support among non-stubborn agents. We show that while this is true, the time to reach this steady-state can differ significantly, depending on the values of $\gamma_0$ and $\gamma_1$. 

In particular, we identify three distinct regimes for the parameters $(\gamma_0,\gamma_1)$. In the first regime, referred to as the {\em sub-critical regime}, the system exhibits {\em metastability} \cite{bovier2016metastability} wherein the opinion initially holding the majority support among non-stubborn agents may continue to hold its majority for an exponentially long time (in the network size $n$) before the opinion with the higher number of stubborn followers starts to dominate. In sharp contrast, in the second regime, referred to as the {\em super-critical regime}, the opinion with the higher number of stubborn followers emerges as the majority opinion among non-stubborn agents in only $O(\log n)$ time. Finally, in the {\em critical regime}, we show that the initial majority opinion among non-stubborn agents may continue to hold its majority for $\Theta(n^p)$ time where $p\in \{1/3,1/2\}$. 

One of the main challenges in our analysis comes from the fact that the dynamics of the $2k$-choices rule under stubborn agents lacks the {\em global monotonicity property} that ensures that the distance between two independent copies of the process started at two different initial states will always reduce on average. 
This makes proving mixing time bounds more challenging.
Furthermore, obtaining precise rates of convergence at the critical boundary becomes quite challenging due to the presence of degenerate roots in the drift. We overcome this challenge using Stein's method~\cite{braverman2021prelimit} which allows us to expand the generator of the underlying process and compare the drift and diffusion terms precisely. We also characterise the boundary at which the phase transition occurs. Hence, our contributions are as follows:


\begin{enumerate}
    \item We characterise the limiting steady-state distribution of the dynamics and show that it concentrates on a point where the opinion with the higher number of stubborn followers becomes the majority opinion.
    \item We identify parameter regimes where the system exhibits exponential and logarithmic convergence rates to steady-state. We also analytically characterise the phase transition boundary.
    \item At the phase transition boundary, we derive exact convergence rates that are polynomial in the network size. To derive these rates, we use Stein's framework based on generator expansion. 
\end{enumerate}

\subsection{Related Literature}

A foundational model in distributed computing is the voter model \cite{holley1975ergodic, clifford1973model} where a node simply copies the opinion of another randomly chosen node in its neighbourhood. In this model, the total number of agents holding a specific opinion is a martingale, and hence the consensus probability of an opinion is equal to its initial fraction. Due to its simplicity and duality with coalescing random walks, several variants of the voter model have been studied in the literature including models on lattices and expanders \cite{cox1989coalescing, cooper2013coalescing},  models where agents exhibit bias~\cite{arpan_JSP_20}, and models where spontaneous opinion switching/mutation occurs independent of the regular update \cite{noisy_voter_model}. 

In \cite{yildiz2011discrete}, stubborn agents were incorporated into the voter model. The authors show that, in the presence of stubborn agents with opposing opinions, convergence to consensus is not possible. Instead, if the underlying network is connected, the dynamics converge to a stationary distribution. The authors analyse the first two moments of the stationary distribution and characterise its dependence on the network structure and the positions of the stubborn individuals. Also studied is the optimal placement of agents on a graph such that stubborn agents have a maximal effect on the bias of the society toward a specific opinion.

A natural extension of the voter model is the \textit{2-choices rule}, where instead of simply copying other agents, an agent samples two other agents from their neighbourhood and adopts the majority opinion among sampled agents and itself. This rule was first analysed on complete graphs in~\cite{redner_two_choices} and on expander graphs in~\cite{cooper_two_choices,cooper2015fast}. Several variants of this rule have been considered in the recent literature. In \cite{ijcai2020p8,cruciani2021phase,mukhopadhyay2016binary} agents are assumed to exhibit a bias toward a specific opinion. A variant of this rule with competing biases was recently analysed in~\cite{capannoli2026phase}. A model where agents may occasionally fail to follow the rule was analysed in~\cite{meredith2026robustness}.
Many of these models exhibit rich phase transition properties. However, one fundamental difference between these models and our present work is that the existing models assume that all agents behave in exactly the same way, whereas in our model the agents are inherently heterogeneous due to the presence of stubborn agents with opposing opinions and non-stubborn agents. 


In \cite{arpan_JSP_20}, the authors introduce stubborn agents into the \textit{2-choices rule}.  The main focus of this work is the analysis of the mean-field limit of the system and characterisation of the stationary distribution in this limit; no result is provided for finite population sizes. Our work provides a major extension to this by fully analysing the \textit{2$k$-choices rule} with stubborn agents for finite populations. We are able to characterise the stationary distribution for all parameter configurations, identify three distinct parameter regimes, and provide scaling laws for the rate of convergence to stationarity in each regime.

\subsection{General Notations}
We use $\RR$ and $\NN$ to denote the set of real and natural numbers, respectively. For $x\in \RR$, $\floor{x}$ and $\ceil{x}$  denote the highest integer not exceeding $x$ and the smallest integer above $x$, respectively. Similarly, $[x]_+=\max(0,x)$ and $[x]_-=\max(0,-x)$. For two positive functions $f$ and $g$ defined on positive integers we write $f(n)=O(g(n))$ when $\limsup_{n\to \infty}f(n)/g(n) < \infty$; $f(n)=o(g(n))$ when $\limsup_{n\to \infty}f(n)/g(n)= 0$; $f(n)=\Omega(g(n))$ when $g(n)=O(f(n))$; $f(n)=\omega(g(n))$ when $g(n)=o(f(n))$.

\subsection{Organisation}
The remaining sections of this paper are organised as follows. Section \ref{Sec: Model 2k Asymmetric} introduces the $2k$-choices model with stubborn agents and defines some preliminary objects required for our analysis. In Section \ref{sec:Main Results}, we state and discuss our main results. In Section \ref{Sec:Meta Lemma}, we state and prove a central lemma required throughout our analysis. In Sections~\ref{proof:stationary}-\ref{proof:criticality}, we provide the detailed proofs of the main theorems. Section \ref{Sec:conclusion} concludes the paper.

\section{Model and Preliminaries}
\label{Sec: Model 2k Asymmetric}

We consider a fully connected network of $n\in\mathbb{N}$ agents/nodes where each agent has an opinion in the set $\{0,1\}$ at each instant of time. 
For each opinion $i\in \{0,1\}$, we assume that there are $N_i^n =\ceil{n\gamma_i}$ stubborn agents whose opinion remains $i$ at all times, where  $\gamma_0,\gamma_1\in (0,1)$ satisfy $0<\gamma_0+\gamma_1<1$. The remaining agents are referred to as {\em non-stubborn agents} that update their opinions as follows. Each non-stubborn agent updates their opinion according to a unit-rate Poisson process. At each update, the agent samples $2k$ neighbours uniformly at random (with replacement) from its neighbourhood, with $k \geq 1$ fixed independently of $n$. The agent then updates its opinion by adopting the majority opinion among the group of sampled neighbours and itself.

The opinion dynamics can be described by the continuous-time process $X^n = (X^n(t), t\geq 0)$, where $X^n(t)$ denotes the proportion of agents holding opinion $1$ at time $t\geq 0$. Define $b_n = n/(n-1)$, the set $\mc S = [\gamma_1, 1-\gamma_0]$, and the polynomial $P_k:\mc S\to \RR$ as $P_k(x)=\sum_{r=k+1}^{2k}\binom{2k}{r} x^r \left( 1-x\right)^{2k-r}$. It is easy to see that the transition rates of the chain $X^n$ from state $x\in\mc{S}^n:=\{N_1^n/n, (N_1^n+1)/n,\ldots, 1-N_0^n/n\}$ to states $x\pm 1/n$, denoted respectively by $q_+^n(x)$ and $q_-^n(x)$, satisfy
\begin{align}
    &q^n_+(x) = nf^n_k(x),\label{eq:uprate}\\
    & q^n_-(x) = ng^n_k(x),\label{eq:downrate} 
\end{align}
where $f^n_k, g^n_k:\mc S\to \RR_+$ are given by
\begin{align}
    \nonumber f^n_k(x)&=\left(1-x - \frac{N_0^n}{n}\right)P_k(b_nx),\\
    \nonumber g^n_k(x)&=\left(x - \frac{N_1^n}{n}\right)P_k(b_n(1-x)).
\end{align}
Clearly, the process $X^n$ is ergodic. Hence, $X^n$ has a unique stationary distribution whose probability mass function we denote by $\pi^n$. Our interest lies in studying $\pi^n$ for large $n$ and how quickly (as a function of $n$) the distribution of $X^n$ converges to $\pi^n$ as this rate of convergence determines whether the network is able to maintain its proximity to the initial distribution of opinions (the law of $X^n(0)$) for a long time or not.

For our analysis, it will be useful to define the drift $\Delta^n_k:\mc S \to \RR$ and diffusion coefficient $\sigma_k^n : \mc S \to \RR_+$ as 
\begin{align}
    &\Delta_{k}^n(x) = {f^n_k(x)-g^n_k(x)},\nonumber\\
    &\sigma_k^n(x)=f_k^n(x)+g_k^n(x)\nonumber,
\end{align}
respectively. The drift $\Delta_k^n(x)$ captures the expected rate at which the proportion of agents holding opinion $1$ increases when the current state is $x\in \mc S^n$. Similarly, the diffusion coefficient $\sigma_k^n(x)$ captures the normalised jump rate out of a given state $x\in \mc S^n$. To characterise the rate at which a function $\phi:\mc S\to \RR$ changes along the trajectory of the process $X^n$, we will use the  generator operator $\mc G_{X^n}$ which, applied to the function $\phi$, takes the form
\begin{align}
    \mathcal{G}_{X^n} \phi(x) & = \lim_{h \to 0} \frac{\mathbb{E}[\phi(X^n(t+h))-\phi(X^n(t))|X^n(t) = x]}{h}\nonumber\\
    & \nonumber= q^n_+(x)(\phi(x+1/n) - \phi(x))+ q^n_-(x)(\phi(x-1/n) -\phi(x))\nonumber.
\end{align}
Thus, when the test function $\phi \in C^l(\mc S)$ is at least $l$ times continuously differentiable at $x\in \mc S$, using the Taylor series expansion of $\phi$ around $x$ in the above expression we obtain
\begin{align}
    &\mc G_{X^n} \phi(x)\leq\Delta_k^n(x)\sum_{i=0}^{\floor{(l-2)/2}} \frac{1}{n^{2i}(2i+1)!}\phi^{(2i+1)}(x)+\sigma_k^n(x)\sum_{i=1}^{\floor{(l-1)/2}}\frac{1}{n^{2i-1}(2i)!} \phi^{(2i)}(x)\nonumber\\
    & \hspace{1em}+\sigma_k^n(x)\frac{1}{n^{l-1}l!}\phi^{(l)}_{\max}\label{eq:generator expansion},
\end{align}
where $\phi^{(i)}$ denotes the $i^{\textrm{th}}$ derivative of $\phi$ and $\phi^{(i)}_{\max}=\sup_{x\in \mc S}|\phi^{(i)}(x)|$ for odd $i$ and $\phi^{(i)}_{\max}=\sup_{x\in \mc S}\phi^{(i)}(x)$ for even $i$.
Also define the limiting drift $\Delta_k:\mc S\to \RR$ and the limiting diffusion coefficient $\sigma_k:\mc S\to \RR_+$ as $\Delta_{k}(x) = f_k(x)-g_k(x)$ and 
$\sigma_{k}(x) = f_k(x)+g_k(x)$,
where 
\begin{align}
    f_k(x)&=\left(1-x - \gamma_0\right)P_k(x)\nonumber,\\
    g_k(x)&=\left(x - \gamma_1\right)P_k(1-x)\nonumber,
\end{align}
are the pointwise limits of $f_k^n$ and $g_k^n$, respectively. It is easy to see that there exists a constant $C>0$ such that $|f_k^n(x)-f_k(x)|<C/n, |g_k^n(x)-g_k(x)|<C/n, |\Delta_k^n(x)-\Delta_k(x)|< C/n$ for all $x\in \mc S$ and all sufficiently large $n$. In the lemma below (whose proof is given in Section \ref{appendix_sec:lemma1} of the Appendix), we characterise some important properties of the limiting drift  $\Delta_k$.

\begin{lemma}
\label{lem:Meta Lemma}
The limiting drift $\Delta_k$ satisfies the following properties for all $k\geq 1$.
\begin{enumerate}
    \item \label{prop:num_roots} $\Delta_k(\gamma_1)>0>\Delta_k(1-\gamma_0)$ and $\Delta_k$ has at least one and at most three real roots (counting multiplicity) in the interval $\mc S$.
    \item \label{prop:root_distance} Let $r_1$ and $r_2$ denote the smallest and the largest real roots of $\Delta_k$ in $\mc S$, respectively, with the possibility of $r_1=r_2$ when the root is unique. Then, for $\gamma_0\neq\gamma_1$ we have
    \begin{align}
        (\gamma_1-\gamma_0)(r_1+r_2-1+\gamma_0-\gamma_1) > 0,
        \label{eq:root_distance}
    \end{align}
    Similarly, for $\gamma_0=\gamma_1$, we have $r_1+r_2=1$ and we always have a root of $\Delta_k$ at $1/2$.
\end{enumerate}
\end{lemma}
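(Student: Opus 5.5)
The plan is to prove Part~\ref{prop:num_roots} by a change of variables that turns $\Delta_k$ into a polynomial on $(0,\infty)$, and then apply Descartes' rule of signs. Part~\ref{prop:root_distance} will follow from a reflection argument about the midpoint of $\mc S$.

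\textbf{Part~\ref{prop:num_roots}.} The endpoint signs are direct: $\Delta_k(\gamma_1)=(1-\gamma_0-\gamma_1)P_k(\gamma_1)>0$ and $\Delta_k(1-\gamma_0)=-(1-\gamma_0-\gamma_1)P_k(\gamma_0)<0$, since $P_k>0$ on $(0,1)$. The intermediate value theorem then gives at least one root. For the upper bound I substitute $t=x/(1-x)$, a bijection $(0,1)\to(0,\infty)$ that preserves root multiplicities. Write $C_r=\binom{2k}{r}$. Using $1=(1-x)(1+t)$ and $x=(1-x)t$, we get $1-x-\gamma_0=(1-x)\bigl((1-\gamma_0)-\gamma_0t\bigr)$ and $x-\gamma_1=(1-x)\bigl((1-\gamma_1)t-\gamma_1\bigr)$. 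We also get $P_k(x)=(1-x)^{2k}\sum_{r=k+1}^{2k}C_rt^r$ and $P_k(1-x)=(1-x)^{2k}\sum_{s=0}^{k-1}C_st^s$. Hence $\Delta_k(x)=(1-x)^{2k+1}Q(t)$, where
\begin{align*}
Q(t)=\bigl((1-\gamma_0)-\gamma_0t\bigr)\sum_{r=k+1}^{2k}C_rt^r-\bigl((1-\gamma_1)t-\gamma_1\bigr)\sum_{s=0}^{k-1}C_st^s .
\end{align*}
The coefficients of $Q$ are as follows:
\begin{itemize}
\item $\gamma_1C_0>0$ at $t^0$;
\item $\gamma_1C_s-(1-\gamma_1)C_{s-1}$ at $t^s$ for $1\le s\le k-1$;
\item $-(1-\gamma_1)C_{k-1}<0$ at $t^k$;
\item $(1-\gamma_0)C_{k+1}>0$ at $t^{k+1}$;
\item $(1-\gamma_0)C_r-\gamma_0C_{r-1}$ at $t^r$ for $k+2\le r\le 2k$;
\item $-\gamma_0C_{2k}<0$ at $t^{2k+1}$.
\end{itemize}
The binomial ratio $C_s/C_{s-1}=(2k-s+1)/s$ is decreasing in $s$. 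So within the block $0\le s\le k$ the signs go from $+$ to $-$ exactly once. Likewise $C_{r-1}/C_r$ is increasing, so within the block $k+1\le r\le 2k+1$ the signs go from $+$ to $-$ exactly once. Ignoring zero coefficients, the whole sign sequence therefore has the pattern $+\cdots+,-\cdots-,+\cdots+,-\cdots-$, which has at most three sign changes. Descartes' rule then gives at most three positive roots of $Q$ counting multiplicity, and so at most three roots of $\Delta_k$ in $(\gamma_1,1-\gamma_0)\subset(0,1)$. The only step needing care is this monotonicity of the sign pattern, and it is a routine check.

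\textbf{Part~\ref{prop:root_distance}.} Let $m=(1+\gamma_1-\gamma_0)/2$ be the midpoint of $\mc S$ and $w=(1-\gamma_0-\gamma_1)/2$ its half-width. Then $1-(m+u)-\gamma_0=w-u$ and $(m+u)-\gamma_1=w+u$. A direct computation gives
\begin{align*}
\Delta_k(m+u)+\Delta_k(m-u)=(w-u)\bigl[P_k(m+u)-P_k(1-m+u)\bigr]+(w+u)\bigl[P_k(m-u)-P_k(1-m-u)\bigr].
\end{align*}
Suppose $\gamma_1>\gamma_0$, so $m>1-m$. Since $P_k$ is strictly increasing, the right-hand side is strictly positive for all $|u|\le w$; in particular $\Delta_k(m)>0$.

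Now I treat two cases. If $r_1\ge m$, then $\Delta_k(m)>0$ forces $r_1>m$, so $r_1+r_2>2m$. Otherwise set $u=m-r_1\in(0,w]$. Then $\Delta_k(m+u)>-\Delta_k(r_1)=0$. Since $\Delta_k(1-\gamma_0)<0$, there is a root in $(m+u,1-\gamma_0)$, so $r_2>m+u=2m-r_1$. In both cases $r_1+r_2>2m=1+\gamma_1-\gamma_0$, which is~\eqref{eq:root_distance}.

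The case $\gamma_0>\gamma_1$ follows by swapping the roles of the opinions. Let $\tilde\Delta_k$ denote the limiting drift with $\gamma_0$ and $\gamma_1$ interchanged. The identity $\Delta_k(1-x)=-\tilde\Delta_k(x)$ sends the roots $r_1\le r_2$ of $\Delta_k$ to the roots $1-r_2\le 1-r_1$ of $\tilde\Delta_k$, and this reverses the inequality. When $\gamma_0=\gamma_1$ the same identity reads $\Delta_k(1-x)=-\Delta_k(x)$. Hence $\Delta_k(1/2)=0$, the root set is symmetric about $1/2$, and $r_1+r_2=1$.
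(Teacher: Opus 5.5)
Your proof is correct. Part~2 follows the paper's argument. Your symmetrised sum $\Delta_k(m+u)+\Delta_k(m-u)$ is exactly the paper's $D_{2k}(h)=\Delta_k(\gamma_1+h)+\Delta_k(1-\gamma_0-h)$ with $h=w-u$. Its positivity for $\gamma_1>\gamma_0$ comes from the same monotonicity of $P_k$, and evaluating at $r_1$ is the paper's choice $h=r_1-\gamma_1$. Your case split on $r_1\ge m$ is not needed: $2m-r_1$ always lies in $\mc S$, and $\Delta_k(2m-r_1)>0$ already forces $2m-r_1<r_2$. Your reflection identity $\Delta_k(1-x)=-\tilde\Delta_k(x)$ handles $\gamma_0>\gamma_1$ and $\gamma_0=\gamma_1$ more cleanly than the paper's ``follows similarly''. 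It also makes the root at $1/2$ explicit.

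Part~1 takes a genuinely different route. The paper computes $\Delta_k''(x)=k\binom{2k}{k}x^{k-2}(1-x)^{k-2}P_{\gamma_0,\gamma_1}(x)$ through incomplete-beta identities. It then shows that the cubic $P_{\gamma_0,\gamma_1}$ has at most one root in $\mc S$, using sign evaluations at $\gamma_1$, $1/2$ and $1-\gamma_0$, split by whether $\gamma_0$ or $\gamma_1$ exceeds $1/2$. Rolle's theorem applied twice finishes the argument.

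Your substitution $t=x/(1-x)$ followed by Descartes' rule of signs needs no derivatives and no case distinctions. It treats all $(\gamma_0,\gamma_1)\in\Gamma$ uniformly and proves slightly more. $\Delta_k$ has at most three roots in the whole of $(0,1)$, and by the parity part of Descartes' rule exactly one or three, counting multiplicity. What the paper's route gives in return is shape information. $\Delta_k''$ vanishes at most once on $\mc S$, so $\Delta_k$ has at most one inflection point there, which is a stronger statement on $\mc S$ than the root count itself.
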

Thus, according to the above lemma, depending on the values of the parameters $\gamma_0$ and $\gamma_1$, the limiting drift $\Delta_k$
can have either one, two or three real roots in the interval $\mc S$. This is a crucial property that holds for the polynomial $\Delta_k$, of degree $2k+1$ for all $k\geq 1$, and allows us to partition
the parameter space $\Gamma=\{(\gamma_0,\gamma_1)\in (0,1)^2: 0<\gamma_0+\gamma_1 <1\}$ of possible values of $(\gamma_0,\gamma_1)$ into three disjoint sets  $\Gamma_{\text{sub}}^k,\Gamma_{\text{crit}}^k,$ and $\Gamma_{\text{sup}}^k$ as defined below.
\begin{definition}
\label{def:set definitions}
    The parameter space $\Gamma=\{(\gamma_0,\gamma_1)\in (0,1)^2: 0<\gamma_0+\gamma_1 <1\}$ is partitioned into three disjoint sets $\Gamma_{\text{sub}}^k,\Gamma_{\text{crit}}^k,$ and $\Gamma_{\text{sup}}^k$ such that for $(\gamma_0,\gamma_1)\in \Gamma_{\text{sup}}^k$,  the polynomial $\Delta_k$ has exactly one real root (with multiplicity one) in $\mc S$, for $(\gamma_0,\gamma_1)\in \Gamma_{\text{sub}}^k$  the polynomial $\Delta_k$ has exactly three distinct real roots (each with multiplicity one) in the interval $\mc S$, and for $(\gamma_0,\gamma_1)\in \Gamma_{\text{crit}}^k$, the polynomial $\Delta_k$ has either one simple root and one double root, or one triple root.
\end{definition}
\begin{figure}
    \centering
    \includegraphics[width=0.7\linewidth]{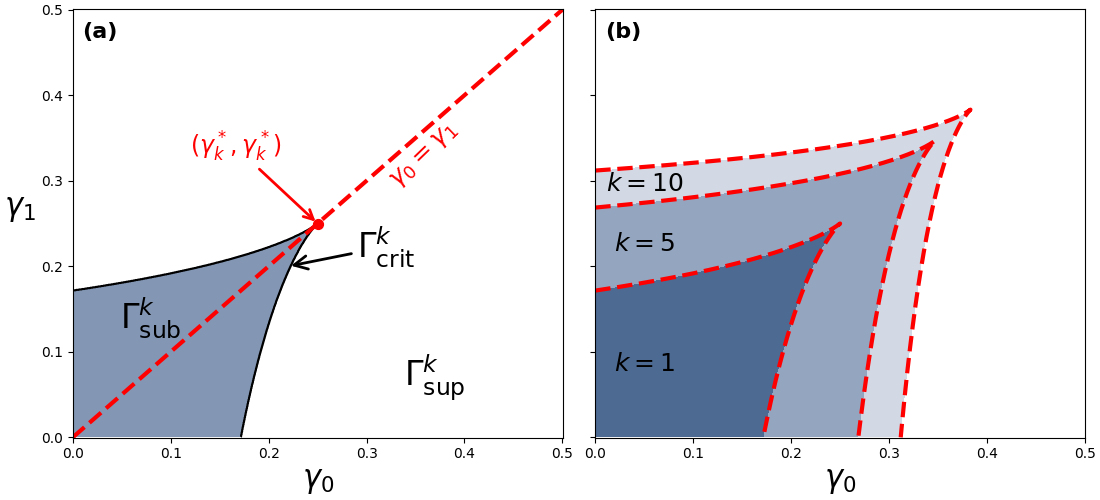}
    \caption{Plot of \textbf{(a)} the regions in the $(\gamma_0, \gamma_1)$ plane for which the function $\Delta_k$ (with $k=1$) has 3 roots (blue) and 1 root (white), and \textbf{(b)} the same plot for different values of $k$. In both plots, the regions were calculated numerically and red-dashed boundary lines displaying $\Gamma_{crit}^k$ were plotted using the parametric form in Proposition \ref{prop:parametric boundaries}, where the parameter ranges were solved numerically for each value of $k$.}
    \label{fig:fixed point regions}
\end{figure}
In Figure \ref{fig:fixed point regions}(a) we show these three regions for $k=1$.
We will see later that the dynamics of $X^n$ depend crucially on the region in which the parameters $(\gamma_0,\gamma_1)$ lie. Figure \ref{fig:fixed point regions}(b) shows how the regions change with $k$. In the proposition below we analytically characterise the region $\Gamma_{\text{crit}}^k$ for any $k\geq 1$. The proof of Proposition \ref{prop:parametric boundaries} is given in Appendix \ref{appendix_sec:prop3}.
\begin{proposition}
\label{prop:parametric boundaries}
    The parametric form of the boundary $\Gamma_{\text{crit}}^k$ is $$\{(\gamma_0(w),\gamma_1(w)), w\in (w_{\min}, w_{\max})\},$$ where
    \begin{align}
        &\nonumber \gamma_1(w)=w - \frac{P_k(w)(P_k(w)+P_{k}(1-w))}{P'_{k}(1-w)P_k(w)+P_k'(w)P_k(1-w)},
    \end{align}
    $\gamma_0(w)=\gamma_1(1-w)$, $w_{\min}$  solves $\gamma_0(w_{\min})=0$, and $w_{\max} = 1 - w_{\min}$. Here, $P'_k$ denotes differentiation with respect to the argument of $P_k$. 
    Furthermore, we have $\gamma_0(1/2)=\gamma_1(1/2)=\gamma_k^*$, where
    \begin{align}
        \gamma^*_{k} = \frac{1}{2}+\frac{1}{4k}-\frac{2^{2k}}{4k\binom{2k}{k}},
    \end{align}
    is the point at which  $\Gamma_{\text{crit}}^k$ intersects the line $\gamma_0=\gamma_1$. 
\end{proposition}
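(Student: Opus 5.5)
The critical set $\Gamma_{\text{crit}}^k$ consists of parameters for which $\Delta_k$ has a repeated root in $\mc S$. So I would characterise it by the pair of equations $\Delta_k(w)=0$ and $\Delta_k'(w)=0$, treating the location $w$ of the repeated root as the parameter. Since $\Delta_k$ is affine in $(\gamma_0,\gamma_1)$ for fixed $w$, these two equations form a $2\times 2$ linear system in $(\gamma_0,\gamma_1)$. Solving it explicitly gives the parametrisation.

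Concretely, write
\begin{align*}
\Delta_k(x)=(1-x-\gamma_0)P_k(x)-(x-\gamma_1)P_k(1-x),
\end{align*}
\begin{align*}
\Delta_k'(x)=-P_k(x)+(1-x-\gamma_0)P_k'(x)-P_k(1-x)+(x-\gamma_1)P_k'(1-x).
\end{align*}
Set $a=1-w-\gamma_0$ and $b=w-\gamma_1$. The conditions at $x=w$ become $aP_k(w)=bP_k(1-w)$ and $aP_k'(w)+bP_k'(1-w)=P_k(w)+P_k(1-w)$. Substituting $a=bP_k(1-w)/P_k(w)$ into the second equation gives
\begin{align*}
b=\frac{P_k(w)\bigl(P_k(w)+P_k(1-w)\bigr)}{P_k'(1-w)P_k(w)+P_k'(w)P_k(1-w)},
\end{align*}
which is exactly the stated formula for $\gamma_1(w)=w-b$. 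By the symmetry $w\mapsto 1-w$ of the expression, $a$ equals the same formula evaluated at $1-w$ minus $\ldots$; more precisely, $1-w-a=\gamma_1(1-w)$, giving $\gamma_0(w)=\gamma_1(1-w)$. The denominator is positive on $(0,1)$ since $P_k'>0$ there, so the map is well defined.

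Next I would argue that every point of $\Gamma_{\text{crit}}^k$ arises this way (a double or triple root $w$ satisfies both equations), and conversely that each $w$ with $(\gamma_0(w),\gamma_1(w))\in\Gamma$ gives a point of $\Gamma_{\text{crit}}^k$. For the converse one must check that $w\in\mc S$, i.e. $a,b\ge 0$, which follows from $a=bP_k(1-w)/P_k(w)$ with $b>0$. The admissible range of $w$ is then determined by positivity of $\gamma_0,\gamma_1$. I expect $\gamma_0(w)$ to be monotone in the relevant region, so $w_{\min}$ is where $\gamma_0$ hits $0$, and by the symmetry $w_{\max}=1-w_{\min}$, where $\gamma_1$ hits $0$. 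Lemma~\ref{lem:Meta Lemma} (at most three roots) guarantees that the repeated root is either a double root accompanied by a simple root or a triple root, matching Definition~\ref{def:set definitions}. The monotonicity and range justification is the step I expect to be the main obstacle. I would try to establish it via the sign of $\gamma_0'(w)$, or else simply define $(w_{\min},w_{\max})$ as the interval on which both coordinates are positive.

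Finally, I would evaluate at $w=1/2$. With $P=P_k(1/2)$ and $P'=P_k'(1/2)$ this gives $\gamma_k^*=\tfrac12-\tfrac{2P^2}{2PP'}=\tfrac12-\tfrac{P}{P'}$. By symmetry of the binomial distribution, $P=\tfrac12\bigl(1-\binom{2k}{k}2^{-2k}\bigr)$. For the derivative, $P_k'(x)=2k\binom{2k-1}{k}x^k(1-x)^{k-1}$, so $P'=2k\binom{2k-1}{k}2^{-(2k-1)}=k\binom{2k}{k}2^{-(2k-1)}$. Then
\begin{align*}
\frac{P}{P'}=\frac{2^{2k}}{4k\binom{2k}{k}}-\frac{1}{4k},
\end{align*}
which yields $\gamma_k^*=\tfrac12+\tfrac1{4k}-\tfrac{2^{2k}}{4k\binom{2k}{k}}$ as claimed.
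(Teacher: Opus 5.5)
Your derivation of the parametrisation and of $\gamma_k^*$ is correct and follows the paper's route. The conditions $\Delta_k(w)=\Delta_k'(w)=0$ form a nonsingular linear system in $(\gamma_0,\gamma_1)$, the denominator is invariant under $w\mapsto 1-w$, and $w=1/2$ gives $\gamma_k^*$. Your value of $\gamma_k^*$ is the right one. The last line of the paper's appendix computation has the signs of the $\tfrac{1}{4k}$ and $\tfrac{2^{2k}}{4k\binom{2k}{k}}$ terms swapped; for $k=1$, $\Delta_1'(1/2)=\tfrac12-2\gamma$, so the triple root occurs at $\gamma=1/4$, as your formula gives.

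Your observation that $b>0$ forces $a>0$ is a useful addition: it puts the repeated root inside $\mc S$ and gives $\gamma_0+\gamma_1<1$ automatically. One small correction: the claim that a double root comes with a simple root needs the sign change $\Delta_k(\gamma_1)>0>\Delta_k(1-\gamma_0)$ from Lemma~\ref{lem:Meta Lemma}, not just the root count.

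The genuine gap is the range of $w$, which you leave open. The proposition asserts that the admissible set $\{w\in(0,1):\gamma_0(w)>0,\ \gamma_1(w)>0\}$ is the interval $(w_{\min},1-w_{\min})$ with $\gamma_0(w_{\min})=0$. Your fallback of simply defining $(w_{\min},w_{\max})$ as the set where both coordinates are positive presupposes exactly this. Nothing in your proposal shows that $\gamma_0$ vanishes anywhere in $(0,1/2)$, or that on the left half it is $\gamma_0$ rather than $\gamma_1$ that reaches zero.

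The paper supplies two facts here. First, $\gamma_1>0$ on $(0,1/2]$. Since $[t(1-t)]^{k-1}$ is increasing on $(0,1/2)$ and $P_k'(t)$ is a constant multiple of $t\,[t(1-t)]^{k-1}$, integrating gives $P_k(w)\le\tfrac w2 P_k'(w)$ for $w\le 1/2$. Combined with $P_k(w)\le P_k(1-w)$, this yields $P_k(w)\bigl(P_k(w)+P_k(1-w)\bigr)\le wP_k'(w)P_k(1-w)<w\bigl(P_k'(1-w)P_k(w)+P_k'(w)P_k(1-w)\bigr)$, i.e.\ $\gamma_1(w)>0$; by symmetry $\gamma_0>0$ on $[1/2,1)$. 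Second, as $w\downarrow 0$ the fraction in $\gamma_0(w)$ has numerator tending to $1$ and denominator of order $w^k$, so $\gamma_0(w)\to-\infty$. With $\gamma_0(1/2)=\gamma_k^*>0$, continuity gives a zero $w_{\min}\in(0,1/2)$. The paper stops at existence and does not prove this zero is unique.

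Your idea of using the sign of $\gamma_0'$ closes that remaining hole. Write $\gamma(w)=(\gamma_0(w),\gamma_1(w))$ and differentiate the identities $\Delta_k(w;\gamma(w))\equiv 0$ and $\Delta_k'(w;\gamma(w))\equiv 0$. This gives $P_k(w)\gamma_0'=P_k(1-w)\gamma_1'$ and $\Delta_k''(w)=P_k'(w)\gamma_0'+P_k'(1-w)\gamma_1'$. Hence $\gamma_0'$ and $\gamma_1'$ share the sign of $\Delta_k''(w)$. By the appendix factorisation of $\Delta_k''$, that is the sign of $(1-2w)G_{\gamma_0,\gamma_1}(w)+(\gamma_1-\gamma_0)(w)$, where $G_{\gamma_0,\gamma_1}(w)\ge 3$ because $\gamma_1(w)<w$ and $\gamma_0(w)<1-w$. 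Next, $\gamma_0'(1/2)=\gamma_1'(1/2)=0$ (combine the first identity at $w=1/2$ with $\gamma_0'(w)=-\gamma_1'(1-w)$), and $(\gamma_1-\gamma_0)'=\gamma_0'\bigl(P_k(w)/P_k(1-w)-1\bigr)$. A continuity argument running down from $w=1/2$ then shows $\gamma_0'>0$ and $\gamma_1>\gamma_0$ on $(0,1/2)$. This gives uniqueness of $w_{\min}$ and the interval structure of the admissible set. It also shows the curve meets the diagonal only at $w=1/2$, which your proposal, like the paper, otherwise takes for granted.
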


\section{Main Results}
\label{sec:Main Results}

In this section, we state and discuss the main results of the paper. Our first main result characterises the stationary distribution $\pi^n$ of the proportion of agents with opinion $1$ as a function of the proportions $\gamma_0$ and $\gamma_1$ of stubborn agents. We use $\Rightarrow$ to denote weak convergence and $\delta_x$ to denote the Dirac measure concentrated at $x\in \RR$.

\begin{theorem}
\label{thm:stationary}
Let $r_1$ and $r_2$ denote the smallest and largest real roots of $\Delta_k$ in $\mc S$, respectively, with the possibility of $r_1=r_2$ in case $\Delta_k$ has a unique root in $\mc S$.
Then, we have $\pi^n\Rightarrow \pi$ as $n\to \infty$ where $\pi$ is given by
        \begin{equation}
            \pi=\begin{cases}
                & \delta_{r_2}, \text{ if } \gamma_1 >\gamma_0,\\
                & \delta_{r_1}, \text{ if } \gamma_1 < \gamma_0,\\
                & \frac{1}{2}\brac{\delta_{r_1}+\delta_{r_2}}, \text{ if } \gamma_1 = \gamma_0.
            \end{cases}
        \end{equation}
\end{theorem}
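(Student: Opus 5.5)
Since $X^n$ is a birth--death chain on $\mc S^n$, I would start from its explicit stationary distribution and use a Laplace-type (large deviations) argument. Detailed balance gives, for $x,y\in\mc S^n$,
\begin{align}
\frac{\pi^n(y)}{\pi^n(x)}=\prod_{z\in[x,y)\cap\mc S^n}\frac{q^n_+(z)}{q^n_-(z+1/n)}=\exp\Big(\sum_{z}\log\frac{f^n_k(z)}{g^n_k(z+1/n)}\Big).
\end{align}
Write $\Phi(x)=\int_{\gamma_1}^{x}\log\frac{f_k(u)}{g_k(u)}\,du$. On compact subintervals of the interior of $\mc S$, the uniform $O(1/n)$ closeness of $f_k^n,g_k^n$ to $f_k,g_k$ and a Riemann-sum argument give $\pi^n(x)=\exp\big(n\Phi(x)+O(1)\big)\cdot\text{const}_n$. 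Near the endpoints $\gamma_1$ and $1-\gamma_0$ the logarithm blows up only logarithmically, which is integrable. Moreover, $\Delta_k(\gamma_1)>0>\Delta_k(1-\gamma_0)$ (Lemma~\ref{lem:Meta Lemma}), so the rate pushes the chain away from the endpoints. Hence the boundary layers carry negligible mass, and I would show this with a direct monotonicity bound on the product ratio near each end.

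The potential $\Phi$ has $\Phi'=\log(f_k/g_k)$, whose sign equals that of $\Delta_k$. So $\Phi$ increases on $[\gamma_1,r_1]$, has its local maxima exactly at $r_1$ and $r_2$ (a local minimum at the middle root when there are three roots), and decreases on $[r_2,1-\gamma_0]$. Its global maxima are therefore in $\{r_1,r_2\}$, and the standard Laplace argument gives the following. For every $\epsilon>0$, $\pi^n$ puts mass $1-e^{-cn}$ on the $\epsilon$-neighbourhoods of the global maximisers. Locally, the mass near each maximiser is of order $\sqrt{n}\,e^{n\Phi(r_i)}$ when the root is simple; at a degenerate root the scaling changes, but that only matters in the tie case. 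The whole theorem then reduces to the sign of
\begin{align}
\Phi(r_2)-\Phi(r_1)=\int_{r_1}^{r_2}\log\frac{f_k(u)}{g_k(u)}\,du.
\end{align}
If $r_1=r_2$, i.e. in the supercritical case or at a triple root, we are done, and the three formulas coincide.

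The main obstacle is showing $\int_{r_1}^{r_2}\log(f_k/g_k)>0$ when $\gamma_1>\gamma_0$. The case $\gamma_1<\gamma_0$ then follows by the symmetry $x\mapsto1-x$, $\gamma_0\leftrightarrow\gamma_1$. For $\gamma_0=\gamma_1$, the symmetry gives $\log(f_k/g_k)(1-u)=-\log(f_k/g_k)(u)$, and $r_1+r_2=1$ by Lemma~\ref{lem:Meta Lemma}, so the integral vanishes. For the tie I must also check that the local masses at $r_1$ and $r_2$ match; this follows from the exact symmetry of the chain, $\pi^n(x)=\pi^n(1-x)$ up to the ceiling effects in $N_i^n$, which are identical here, giving $\tfrac12(\delta_{r_1}+\delta_{r_2})$.

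For the asymmetric case, I would decompose
\begin{align}
\log\frac{f_k(u)}{g_k(u)}=\log\frac{1-u-\gamma_0}{u-\gamma_1}+\log\frac{P_k(u)}{P_k(1-u)}.
\end{align}
The second term is odd about $1/2$. The first is odd about the shifted centre $c=(1+\gamma_1-\gamma_0)/2$. I would compare the integral over $[r_1,r_2]$ with integrals over intervals symmetric about the respective centres. Lemma~\ref{lem:Meta Lemma} gives $r_1+r_2>1-\gamma_0+\gamma_1=2c$ when $\gamma_1>\gamma_0$, so $[r_1,r_2]$ is shifted to the right of $c$, and also of $1/2$ since $c>1/2$. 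On that excess the integrand is positive, because the right part of $[r_1,r_2]$ consists of points beyond the middle root, where $\Phi$ has been increasing toward $r_2$.

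Turning this into a clean inequality is where I expect the difficulty. The first thing I would try is a monotone deformation in the parameter. Fix $\gamma_0$, and view $D(\gamma_1)=\Phi(r_2)-\Phi(r_1)$ as a function of $\gamma_1$ along the segment from $\gamma_1=\gamma_0$, where $D=0$, while staying in $\Gamma^k_{\text{sub}}$. By the envelope theorem, since $\Phi'(r_i)=0$,
\begin{align}
\frac{dD}{d\gamma_1}=\int_{r_1}^{r_2}\frac{\partial}{\partial\gamma_1}\log\frac{f_k}{g_k}\,du=\int_{r_1}^{r_2}\frac{du}{u-\gamma_1}>0.
\end{align}
So $D>0$ for all $\gamma_1>\gamma_0$ in the connected subcritical component. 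Once the path leaves that component, $r_1=r_2$ and there is nothing to prove. This envelope argument seems to bypass the explicit inequality, and it is the route I would commit to.
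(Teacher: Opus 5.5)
Your proposal follows the paper's proof essentially step for step: the detailed-balance/Laplace reduction to the sign of $\int_{r_1}^{r_2}\log(f_k/g_k)\,du$, antisymmetry about $1/2$ when $\gamma_0=\gamma_1$, and differentiation in $\gamma_1$ (boundary terms vanish because $\log(f_k/g_k)(r_i)=0$), leaving $\int_{r_1}^{r_2}du/(u-\gamma_1)>0$. You are more careful than the paper in the tie case, where you use the exact $x\mapsto 1-x$ symmetry of the chain instead of the paper's unjustified equal-weights claim, and you require the deformation to stay inside $\Gamma^k_{\text{sub}}$; but, like the paper, you never prove that the segment $\{\gamma_0\}\times(\gamma_0,\gamma_1]$ meets $\Gamma^k_{\text{sub}}$ in an interval starting at the diagonal, and that fact is exactly what rules out a point with $\gamma_1>\gamma_0$ where the upper pair of roots merges and $D<0$.
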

The above theorem shows that when $\gamma_0\neq \gamma_1$, the stationary distribution of opinions tends to concentrate on a point where the majority opinion among the non-stubborn agents is the opinion with the higher proportion of stubborn followers. To see this, note that for $\gamma_1 >\gamma_0$ (resp. $\gamma_0>\gamma_1$), according to the above theorem, the stationary proportion of nodes with opinion $1$ among the entire population concentrates on $r_2$ (resp. $r_1$) and therefore the proportion of nodes with opinion $1$ among non-stubborn agents concentrates on $(r_2-\gamma_1)/(1-\gamma_0-\gamma_1)$ (resp. $(r_1-\gamma_1)/(1-\gamma_0-\gamma_1)$) which by the second statement of Lemma~\ref{lem:Meta Lemma} satisfies  $(r_2-\gamma_1)/(1-\gamma_0-\gamma_1)\geq(r_1+r_2-2\gamma_1)/2(1-\gamma_0-\gamma_1)> 1/2$ (resp. $(r_1-\gamma_1)/(1-\gamma_0-\gamma_1)\leq(r_1+r_2-2\gamma_1)/2(1-\gamma_0-\gamma_1)< 1/2$).
Hence, at steady-state, the majority of non-stubborn agents follow the opinion which has the higher proportion of stubborn followers. This is natural to expect given that the update rule favours the opinion having the higher number of followers. In contrast, when the proportions of stubborn followers of the two opinions are equal, i.e., $\gamma_0=\gamma_1$, the stationary distribution is symmetric around $1/2$ since in this case we have $r_1=1-r_2$ (as shown in Lemma~\ref{lem:Meta Lemma}, meaning that no opinion strictly dominates at the steady-state.



The result of Theorem~\ref{thm:stationary} seems to indicate that the long-term behaviour of the non-stubborn agents is determined primarily by the larger stubborn group. However, our next set of results show that the speed at which the distribution of $X^n(t)$ converges to $\pi^n$ varies drastically depending on the proportions $\gamma_0$ and $\gamma_1$ of stubborn agents and, as a result, the non-stubborn agents may (in case of slow convergence to $\pi^n$) be able to retain their initial majority opinion for a long period of time before their majority opinion switches to the one with the higher number of stubborn followers. 

To state our results, for any $p\in\mc S$ and any $\delta>0$, we denote $B_{\delta}(p)=(p-\delta,p+\delta)$ and for any $B\subseteq \mc S$ we define $T_{\text{exit}}^n(B)=\inf\{t\geq 0: X^n(t)\notin B\}$ and $T_{\text{enter}}^n(B)=\inf\{t\geq 0: X^n(t)\in B\}$ as the first times the chain $X^n$ exits and enters the set $B$, respectively. For $p\in \mc S$ we denote by $\PP_p(\cdot)$ and $\EE_p[\cdot]$ the probability and expectation, respectively, conditioned on $X^n(0)=\ceil{np}/n$. 
Furthermore, we define the {\em maximum total variation distance} between the distribution of $X^n(t)$ and $\pi^n$ as $d^n(t) = \max_{x \in \mc{S}^n}\max_{\mathcal{X} \in \mc{P}(\mc{S}^n)}|\PP_{x}(X^n(t) \in \mc{X}) - \pi^n(\mc{X})|$ where $\mc P(\mc S^n)$ denotes the power set of $\mc S^n$. For any $\epsilon>0$, we define $t_{\text{mix}}^n(\epsilon)=\inf\{t:d^n(t)\leq \epsilon\}$. Thus, $t_{\text{mix}}^n(\epsilon)$ denotes the first time the maximum total variation distance between the distribution of $X^n(t)$ and $\pi^n$ falls below a threshold of $\epsilon$. In particular, for $\epsilon=1/4$, we define $t_{\text{mix}}^n:=t_{\text{mix}}^n(1/4)$. Thus, a higher value of $t_{\text{mix}}^n$ indicates a slower convergence to the stationary distribution $\pi^n$. We say that an event occurs ``with high probability'' or in short ``w.h.p.'' when its probability of occurrence converges to one as $n\to \infty$. 


The next theorem characterises the process $X^n$ when $(\gamma_0, \gamma_1) \in \Gamma_{\text{sub}}^k$. 
\begin{theorem}
\label{thm:slow_mixing}
    Fix $(\gamma_0,\gamma_1)\in \Gamma_{\text{sub}}^k$ and let $r_1< r_m < r_2$ denote the three distinct real roots (each having a multiplicity of one) of $\Delta_k$ in $\mc S$. Then we have $t_{\text{mix}}^n = \Omega(\exp (\Theta(n)))$. Furthermore, for any  constant $p\neq r_m$ and any sufficiently small $\delta,\epsilon>0$, we have $\PP_p(T_{\text{enter}}^n(B_{\delta}({r^*}))>C\log n/\epsilon)\leq \epsilon$ for some positive constant $C$ and for all sufficiently large $n$  where $r^*=r_1\indic{p<r_m}+r_2\indic{p>r_m}$. For any $i\in \{1,2\}$, we also have $T_{\text{exit}}^n(B_\delta(r_i))=\Omega(\exp(\Theta(n)))$ w.h.p. for any starting state $X^n(0)\in B_{\delta/2}(r_i)$.
\end{theorem}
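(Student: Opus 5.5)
Since $X^n$ is a one-dimensional birth--death chain on $\mc S^n$, I will work with its explicit stationary distribution and a quasi-potential. By detailed balance,
\begin{align}
\pi^n(x) \propto \prod_{y<x,\,y\in\mc S^n} \frac{q^n_+(y)}{q^n_-(y+1/n)} = \exp\Big(-n\,U(x) + O(\log n)\Big),\qquad U(x) = -\int_{\gamma_1}^{x} \log\frac{f_k(y)}{g_k(y)}\,dy,\nonumber
\end{align}
uniformly on compact subsets of the interior of $\mc S$. The $O(1/n)$ bounds on $|f_k^n-f_k|$ and $|g_k^n-g_k|$ give the Riemann-sum approximation. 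The sign of $\log(f_k/g_k)$ is the sign of $\Delta_k$. By Lemma~\ref{lem:Meta Lemma} and the simple-root assumption, $U$ has strict local minima at $r_1,r_2$ and a strict local maximum at $r_m$.

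\textbf{Mixing-time lower bound.} I will use a bottleneck (conductance) argument. Let $A=\{x\le r_m\}$, and suppose without loss of generality that $\pi^n(A)\le 1/2$; otherwise use $A^c$. The escape rate from $A$ under stationarity is $\Phi=\pi^n(r_m)q^n_+(r_m)/\pi^n(A)$. Now $\pi^n(r_m)/\pi^n(r_1)\le \exp(-n(U(r_m)-U(r_1))+O(\log n))$, and $\pi^n(A)\ge\pi^n(r_1)$. Hence $\Phi\le \exp(-\Theta(n))$ and $t^n_{\text{mix}}\ge c/\Phi=\Omega(\exp(\Theta(n)))$. The same bound follows directly by starting from $r_1$ and noting that leaving $A$ takes exponential time.

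\textbf{Exit times.} For $T^n_{\text{exit}}(B_\delta(r_i))$, I reduce to a gambler's-ruin estimate for the birth--death chain. Started in $B_{\delta/2}(r_i)$, the probability of hitting the boundary of $B_\delta(r_i)$ before returning to $\ceil{nr_i}/n$ is at most $\exp(-n\,c_\delta)$, where $c_\delta=\min(U(r_i\pm\delta))-U(r_i)>0$. This follows from the standard formula involving sums of $\prod q_-/q_+$ and the fact that $\Delta_k$ points toward $r_i$ on $B_\delta(r_i)\setminus\{r_i\}$ for small $\delta$. Each excursion takes time at least of order $1/n$, since the total rates are $O(n)$. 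A union bound over $e^{nc_\delta/2}$ excursions then gives $T_{\text{exit}}=\Omega(e^{\Theta(n)})$ w.h.p. The initial trip from $B_{\delta/2}(r_i)$ to $r_i$ also succeeds w.h.p. by the same estimate.

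\textbf{Logarithmic entry time.} Take $p<r_m$; the case $p>r_m$ is symmetric. I will use the Lyapunov function $\phi(x)=(x-r_1)^2$ together with the expansion \eqref{eq:generator expansion}, which gives
\begin{align}
\mc G_{X^n}\phi(x)\le 2(x-r_1)\Delta_k(x)+O(1/n).\nonumber
\end{align}
On $[\gamma_1,r_m-\eta]$ we have $(x-r_1)\Delta_k(x)\le -c\,(x-r_1)^2$, by the simple root at $r_1$ and the sign pattern from Lemma~\ref{lem:Meta Lemma}. So $\phi$ decays exponentially in expectation until it reaches $O(\delta^2)$, which takes $O(\log(1/\delta))$ time. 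The chain must also stay below $r_m-\eta$. I control this with a Doob martingale bound on $X^n(t)-X^n(0)-\int_0^t\Delta^n_k$, whose quadratic variation is $O(t/n)$. Over a horizon of $C\log n/\epsilon$ the fluctuations are $O(\sqrt{\log n/n})$ w.h.p., so the chain tracks the ODE $\dot x=\Delta_k(x)$ from $p$. That ODE enters $B_{\delta/2}(r_1)$ in constant time, since $p\neq r_m$ is fixed. This yields the probability bound $\epsilon$, with considerable slack, because the ODE needs only constant time while the statement allows time $C\log n/\epsilon$.

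The main obstacle is the exponential estimates. I need the discrete products to match $e^{-nU}$ uniformly, including control near the boundaries of $\mc S$, where $q_\pm$ vanish. There I will restrict attention to compact subintervals and use monotonicity of the products.
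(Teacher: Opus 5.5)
Your proof works, but it takes a different route from the paper in all three parts. The paper derives everything from the Lyapunov-function Lemma~\ref{lem:log_hitting_time}:
\begin{itemize}
\item the entry-time bound comes from the drift of $\log(1+n(x-r)^2)$ together with Dynkin's formula and Markov's inequality;
\item the exit-time bound comes from the exponential supermartingale $\exp(n\theta(x-r)^2/2)$, combined with a count of returns to $B_\eta(r)$ and of jumps;
\item the mixing lower bound comes from Theorem~\ref{thm:stationary} (some $r_i$ carries stationary mass at least $1/3$) combined with the exit-time bound from the other well.
\end{itemize}
You instead exploit the explicit birth--death structure. You use a bottleneck-ratio bound through the quasi-potential $U$, gambler's-ruin formulas for single excursions, and a fluid-limit comparison (martingale plus Gronwall).

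Your route gives sharper and more self-contained information. The conductance bound does not need Theorem~\ref{thm:stationary}, since your choice between $A$ and $A^c$ replaces it. It also exhibits the explicit Arrhenius exponent $U(r_m)-U(r_i)$ of the metastable well. The fluid limit shows that from a fixed $p\neq r_m$ the chain actually enters $B_\delta(r^*)$ within an $O(1)$ time w.h.p. It also directly controls the event of leaving the basin of $r^*$ first.

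The paper's route gives robustness and reuse. Its bounds hold uniformly over a whole region of attraction, control expected hitting times, and need no explicit formulas. Lemma~\ref{lem:log_hitting_time}, including its coupling statement, is then reused for Theorems~\ref{thm:fast_mixing} and~\ref{thm:criticality}. Your fluid-limit time, by contrast, is non-uniform and blows up as $p\to r_m$.

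Three details need tightening.
\begin{enumerate}
\item The exponent $c_\delta=\min_{\pm}U(r_i\pm\delta)-U(r_i)$ is right for excursions started at $\ceil{nr_i}/n$. From a starting point near $r_i\pm\delta/2$, however, the escape probability is only of order $e^{-n(U(r_i\pm\delta)-U(r_i\pm\delta/2))}$. This is still exponentially small, so use that constant for the initial trip.
\item Run the fluid comparison only over the constant horizon $T(p,\delta)$ at which the ODE enters $B_{\delta/2}(r^*)$. Over a horizon $C\log n/\epsilon$, the Gronwall factor $e^{LT}$ becomes $n^{LC/\epsilon}$, where $L$ is a Lipschitz constant of $\Delta_k$ on $\mc S$. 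That factor swamps your $O(\sqrt{\log n/n})$ fluctuation bound. Once you do this, the $(x-r_1)^2$ Lyapunov computation is superfluous.
\item The claim ``each excursion takes time of order $1/n$'' should be replaced by a concentration statement. Use the sum of the $e^{nc_\delta/2}$ i.i.d.\ exponential holding times at $\ceil{nr_i}/n$, each with rate $\Theta(n)$, which concentrates around its mean.
\end{enumerate}
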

\begin{figure}
    \centering
    \includegraphics[width=0.8\linewidth]{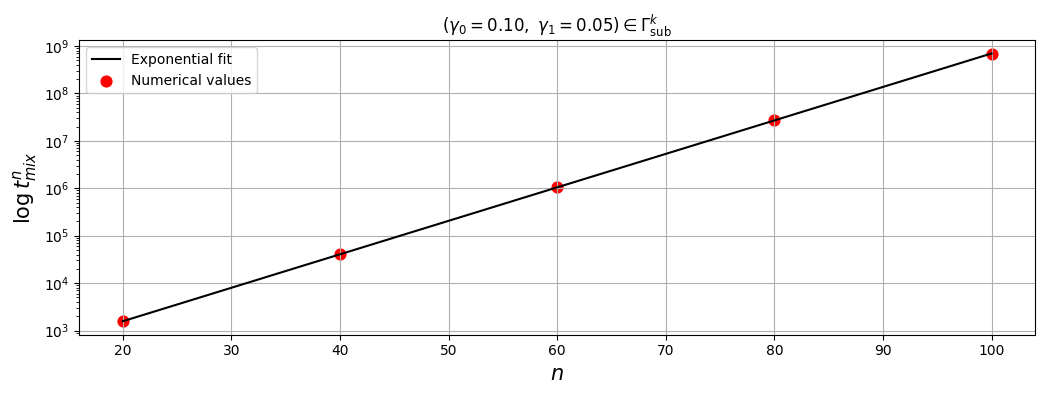}
    \caption{Plot of the logarithm of the mixing time of the process against $n$ in sub-critical parameter regime $(\gamma_0, \gamma_1) \in \Gamma_{sub}^{k}$. To compute the mixing time, the transient distribution of $X^n(t)$ was calculated by solving the forward Kolmogorov equation, and the stationary distribution $\pi^n$ was calculated by solving the detailed balance equations.}
    \label{fig:numerical_validation_sub_critical}
\end{figure}
\textit{Discussion on the result:} This result demonstrates that when $(\gamma_0, \gamma_1) \in \Gamma_{\text{sub}}^k$, the dynamics' mixing time grows exponentially with $n$ which we numerically validate in Figure \ref{fig:numerical_validation_sub_critical}. Specifically, if the system starts with a proportion $p$ of agents holding opinion $1$, the chain $X^n(t)$ rapidly converges in just $O(\log n)$ time to whichever root among $r_1$ and $r_2$ of $\Delta_k$ lies on the same side of $r_m$ as $p$. After reaching one of the roots $r_1$ or $r_2$, the chain remains trapped near this root for an exponentially long duration before switching to the other root. 

Thus, in the sub-critical regime $\Gamma_{\text{sub}}^k$, the opinion that initially holds the majority among non-stubborn agents may continue to do so for an exponentially long period even if the opposite opinion has a larger proportion of stubborn followers. As shown in Figure~\ref{fig:fixed point regions}, the region  $\Gamma_{\text{sub}}^k$ corresponds to small values of both $\gamma_0$ and $\gamma_1$ and a small difference between them. Intuitively, when stubborn populations are small and balanced, their influence is negligible, leaving the majority rule as the primary driver of the network dynamics. 

\textit{Intuitive explanation using the drift:} In Figure \ref{fig:drift_sub_critical}, we see that the limiting drift $\Delta_k$ has three simple roots, $r_1, r_m$, and $r_2$ in $\mc{S}$ when $(\gamma_0,\gamma_1)\in \Gamma_{\text{sub}}^k$. Furthermore, the two extreme roots $r_1$ and $r_2$ are both {\em stable} in the sense that for each of these roots there is an interval $[a,b]$ containing the root such that $\Delta_k(x)>0$ for all $x\in [a,r)$ and $\Delta_k(x)<0$ for all $x\in (r,b]$. This implies that the proportion of agents with opinion $1$ increases when the process $X^n$ is below the stable root and decreases when the process is above the stable root. Hence, the process $X^n$ is driven towards the stable root from either side. The middle root $r_m$ is {\em unstable} in the sense that when the process $X^n$ is sufficiently close (but not equal) to $r_m$, the drift takes it further away from this root. We show that this nature of the drift results in the process $X^n$ visiting the stable root closest to its current state exponentially many times before crossing the unstable root.

\begin{figure}
    \centering
    \includegraphics[width=0.8\linewidth]{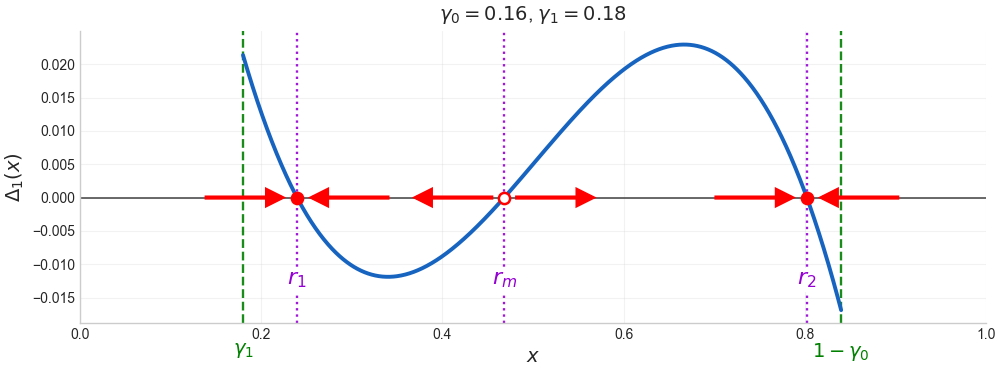}
    \caption{Plot of the limiting drift $\Delta_k$ for $(\gamma_0, \gamma_1) \in \Gamma_{\text{sub}}^k$, with $k=1$. The red arrows indicate the direction of the drift around the roots.}
    \label{fig:drift_sub_critical}
\end{figure}

Next, we analyse the process $X^n$ when $(\gamma_0, \gamma_1) \in \Gamma_{\text{sup}}^k$. Our main result for this regime is the following.

\begin{theorem}
    \label{thm:fast_mixing}
    Fix $(\gamma_0, \gamma_1)\in\Gamma_{\text{sup}}^k$ and let $r$ be the unique simple root of $\Delta_k$ in $\mc S$. Then, for any $\delta>0$ and any $x \in \mc S^n$, we have $\EE_x[T_{enter}^n(B_{\delta}(r))]=O(\log n)$. Furthermore, the mixing time of the process is $t_{mix}^n=O(\log n)$.
\end{theorem}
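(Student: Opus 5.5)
We prove the hitting-time bound with a Lyapunov function built from the drift, and then get the mixing-time bound by coupling two copies of the chain through a contracting neighbourhood of $r$.

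Since $(\gamma_0,\gamma_1)\in\Gamma_{\text{sup}}^k$, Lemma~\ref{lem:Meta Lemma} and Definition~\ref{def:set definitions} give that $\Delta_k$ has exactly one root $r$ in $\mc S$, and it is simple. Also $\Delta_k(\gamma_1)>0>\Delta_k(1-\gamma_0)$. So $\Delta_k>0$ on $[\gamma_1,r)$, $\Delta_k<0$ on $(r,1-\gamma_0]$, and $\Delta_k'(r)<0$. Compactness then gives a constant $c>0$ with
\begin{align}
(x-r)\Delta_k(x)\le -c(x-r)^2 \quad \text{for all } x\in\mc S.\label{eq:sup_contract}
\end{align}
Near $r$ this follows from $\Delta_k'(r)<0$. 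Away from $r$, the ratio $-\Delta_k(x)/(x-r)$ is positive and continuous, hence bounded below. Because $|\Delta_k^n-\Delta_k|<C/n$, inequality \eqref{eq:sup_contract} holds for $\Delta_k^n$ as well, up to an additive error $C|x-r|/n$.

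\textbf{Hitting time of $B_\delta(r)$.} Take $\phi(x)=\log\big((x-r)^2+\eta\big)$ with a small fixed $\eta\ll\delta^2$. Alternatively, work directly with $\phi(x)=(x-r)^2$ and a multiplicative drift argument. Apply the generator expansion \eqref{eq:generator expansion} with $l=3$:
\begin{align*}
\mc G_{X^n}(x-r)^2 = 2(x-r)\Delta_k^n(x)+\frac{\sigma_k^n(x)}{n} \le -2c(x-r)^2+\frac{C'}{n}.
\end{align*}
Outside $B_\delta(r)$ the term $(x-r)^2$ is at least $\delta^2$, so the drift of $(x-r)^2$ there is at most $-c\delta^2$ for large $n$. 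The standard supermartingale (Dynkin) argument then gives $\EE_x[T_{enter}^n(B_\delta(r))]\le \sup_{y\in\mc S}(y-r)^2/(c\delta^2)=O(1)$, which is stronger than the claimed $O(\log n)$. For the finer statement needed below, that the chain enters $B_{n^{-1/2}\log n}(r)$ within $O(\log n)$ time, use instead $\EE[(X^n(t)-r)^2]\le e^{-2ct}(X^n(0)-r)^2+C'/(2cn)$. This comes from Grönwall applied to the displayed inequality, and it yields the $O(\log n)$ scale.

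\textbf{Mixing time.} We couple two copies $X^n,Y^n$ started from arbitrary states. We use the monotone (grand) coupling: both copies are driven by the same Poisson clocks and uniforms, so each copy makes $\pm1/n$ jumps at its own rates, the order $X^n\le Y^n$ is preserved, and the two copies merge once they meet. This works because the chain is birth–death and the rates are comparable on a common state. Run the sandwiching copies started from the minimal state $N_1^n/n$ and the maximal state $1-N_0^n/n$. Then $d^n(t)\le \PP(\tau>t)$, where $\tau$ is the meeting time of these two extremes. By the first step, both reach $B_\delta(r)$ in $O(\log n)$ time with probability close to $1$, and a large-deviation bound keeps them there for polynomially long.

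Inside $B_\delta(r)$ the process is locally contracting: for $x\le y$ in $B_\delta(r)$ with $\delta$ small,
\begin{align*}
\Delta_k^n(y)-\Delta_k^n(x)\le -\tfrac{c}{2}(y-x)+O(1/n).
\end{align*}
So $D=Y^n-X^n\ge0$ satisfies $\frac{d}{dt}\EE D\le -\tfrac c2\EE D+O(1/n)$. After $O(\log n)$ further time, $\EE D=O(1/n)$, meaning the gap is $O(1)$ lattice steps. Coalescence of a nonnegative integer gap with nonnegative-drift-free fluctuations then occurs in $O(1)$ additional time with probability bounded away from zero. For this, use that the jump rates are of order $n$ and that the difference process is a lazy walk pushed toward $0$. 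Iterating gives $\PP(\tau> C\log n)\le 1/4$, hence $t_{mix}^n=O(\log n)$.

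\textbf{Main obstacle.} The hardest step is the coalescence step. The paper notes there is no global monotone contraction: $\Delta_k$ need not be decreasing on all of $\mc S$. That is why the contraction argument is restricted to $B_\delta(r)$ after both copies have entered it, with exit handled by an exponential-martingale bound on leaving $B_{2\delta}(r)$. The final $O(1/n)\to 0$ merge also needs care. I would handle it by observing that once $D=1/n$, the probability that the copy at the lower state jumps up before either copy moves otherwise is bounded below by a constant, because both rates are $\Theta(n)$ near $r$ (recall $r\in(\gamma_1,1-\gamma_0)$ strictly, so $f_k,g_k>0$ there).
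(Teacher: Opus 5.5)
Your route is sound in outline and differs from the paper's in two ways. For the hitting time, the paper invokes the third statement of Lemma~\ref{lem:log_hitting_time}, whose Lyapunov function $\log(1+n(x-r)^2)$ gives $O(\log n)$. Your quadratic function $(x-r)^2$ has drift at most $-c\delta^2$ off $B_\delta(r)$, so it gives $\EE_x[T^n_{\text{enter}}(B_\delta(r))]=O(1)$ directly, which is simpler and sharper for fixed $\delta$.

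For mixing, the paper couples each pair $(x,\bar x)$ independently until meeting. It uses the event $G(t)$ and the exponential exit bound to place both copies in $B_{\delta'}(r)$. It then applies the fourth statement of the lemma: $\log(1+n|\bar x-x|)$ has drift at most $-C_2/4$ whenever the gap is at least $1/n$, so Dynkin's formula gives the coupling time with no separate final step. Your sandwich argument, with one monotone coupling and only the two extreme starting states, is a nice simplification of the worst case over pairs. Note, though, that order preservation under shared clocks and uniforms needs the right construction. For example, uniformise at rate $\Lambda\geq 2\max_x\max(q^n_+(x),q^n_-(x))$, move up iff $U<q^n_+/\Lambda$, and move down iff $U>1-q^n_-/\Lambda$; a naive assignment lets adjacent copies swap.

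The step that fails as written is the final coalescence.
\begin{itemize}
\item Under your shared-uniform coupling, put the copies at $x$ and $x+1/n$. The lower copy's up-jumps coincide with the upper copy's up-jumps except on an event of rate $|q^n_+(x)-q^n_+(x+1/n)|=O(1)$.
\item So the probability that the lower copy jumps up alone before any other move is $O(1/n)$, not bounded below. Your ``rates are $\Theta(n)$'' reasoning is valid only for independent copies.
\item You also only treat a gap of one lattice step, whereas your $L^1$ bound leaves a gap of $K$ steps.
\end{itemize}

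The clean fix is to drop the additive $O(1/n)$ in your contraction inequality. It is an artefact of using $|\Delta^n_k-\Delta_k|<C/n$ twice. Since $(\Delta^n_k)'\to\Delta_k'$ uniformly (as the paper uses for~\eqref{eq:contract}), for large $n$ you have $\Delta^n_k(y)-\Delta^n_k(x)\le-\tfrac c2(y-x)$ exactly, for all $x\le y$ in $B_{2\delta}(r)$.

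Let $\tau$ be the first exit of either copy from $B_{2\delta}(r)$. Then $e^{ct/2}D(t\wedge\tau)$ is a supermartingale, so $\EE[D(t)\indic{\tau>t}]\le 2\delta e^{-ct/2}$. Since $nD$ is a nonnegative integer and coalesced copies stay together, $\PP(D(t)\neq 0)\le \PP(\tau\le t)+2\delta n e^{-ct/2}$. For $t=(4/c)\log n$ this is small by the exponential exit bound. This replaces your coalescence step and the iteration entirely.
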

\begin{figure}[t]
    \centering
    \includegraphics[width=0.8\linewidth]{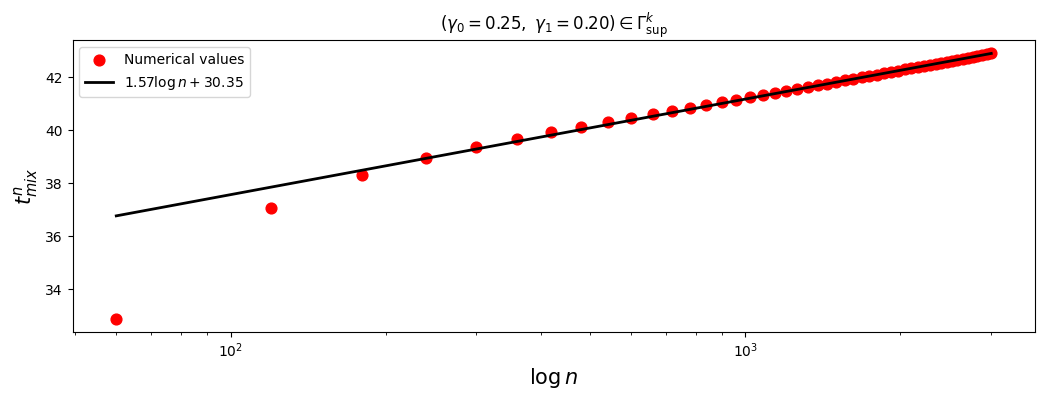}
    \caption{Plot of the mixing time of the process against $\log n$ in super-critical parameter regime $(\gamma_0, \gamma_1) \in \Gamma_{sup}^{k}$. To compute the mixing time numerically, the transient distribution of $X^n(t)$ was calculated by solving the forward Kolmogorov equation, and the stationary distribution $\pi^n$ was calculated by solving the detailed balance equations.}
    \label{fig:numerical_validation_super_critical}
\end{figure}
\textit{Discussion on the result:} The result above shows that when $(\gamma_0, \gamma_1) \in \Gamma_{\text{sup}}^k$, the dynamics converge arbitrarily close to the unique root $r$ of $\Delta_k$ in $\mathcal{S}$ in only $O(\log n)$ time. 
This is numerically validated in Figure \ref{fig:numerical_validation_super_critical}.
From Theorem~\ref{thm:stationary}, $r$ is also the point where the stationary distribution $\pi^n$ concentrates as $n \to \infty$. Consequently, the mixing time of the dynamics is also $O(\log n)$.
This implies that  the majority opinion among non-stubborn agents rapidly shifts toward the opinion with the larger number of stubborn followers, regardless of the network's initial state. Intuitively, as illustrated in Figure~\ref{fig:fixed point regions}, at least one of the stubborn proportions ($\gamma_0$ or $\gamma_1$) is large in $\Gamma_{\text{sup}}^k$. This stronger stubborn presence provides enough momentum to swiftly overcome any initial bias in the population. 
\begin{figure}
    \centering
    \includegraphics[width=0.8\linewidth]{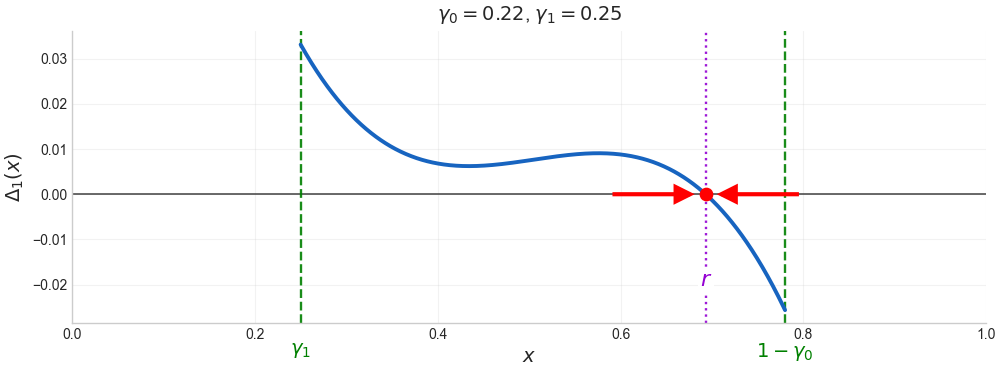}
    \caption{Plot of the limiting drift $\Delta_k$ for $(\gamma_0, \gamma_1) \in \Gamma_{\text{sup}}^k$, with $k=1$. The red arrows indicate the direction of the drift around the roots.}
    \label{fig:drift_sup_critical}
\end{figure}

\textit{Intuitive explanation using the drift:} In Figure \ref{fig:drift_sup_critical}, we see that the unique simple root $r$ of $\Delta_k$ in $\mc{S}$ is globally stable in the sense that for all $x < r$ the drift $\Delta_k$ is positive, and for all $x > r$, the drift $\Delta_k$ is negative. 
As a result, the limiting drift geometrically decreases the distance $|X^n(t)-r|$.
This causes the chain to converge to the root $r$ exponentially fast from anywhere in the state-space.


We now turn to the regime where $(\gamma_0, \gamma_1) \in \Gamma_{\text{crit}}^k$. The analysis of the process $X^n$ is most challenging at this critical boundary since at the boundary the drift $\Delta_k$ has a root with multiplicity higher than one in $\mc S$; for $(\gamma_0, \gamma_1) \in \Gamma_{\text{crit}}^k\cap\{\gamma_0=\gamma_1\}$ there exists a unique root at $1/2$ with multiplicity three (which follows from the second statement of Lemma~\ref{lem:Meta Lemma} and the definition of $\Gamma_{\text{crit}}^k$) and for $(\gamma_0, \gamma_1) \in \Gamma_{\text{crit}}^k\cap\{\gamma_0\neq \gamma_1\}$ there are two distinct roots, one with multiplicity one and the other with multiplicity two (which again follows from the second statement of Lemma~\ref{lem:Meta Lemma} and the definition of $\Gamma_{\text{crit}}^k$). As shown below, the existence of such roots significantly alters the dynamics as compared to the sub- and super-critical regimes.  

\begin{theorem}
\label{thm:criticality}
    Fix $(\gamma_0, \gamma_1)\in \Gamma_{\text{crit}}^k$. The two possibilities are $\gamma_0=\gamma_1$ and $\gamma_0\neq\gamma_1$. For $\gamma_0=\gamma_1$, let $r$ be the unique root of $\Delta_k$ in $\mc S$ with multiplicity three and for $\gamma_0\neq\gamma_1$, let $r$ denote the root of $\Delta_k$ in $\mc S$ with multiplicity one. The following statements hold.
    \begin{enumerate}
        \item If $\gamma_0 \neq \gamma_1$, then, for each sufficiently small $\delta>0$ and $x\in \mc S^n$, we have $\EE_{x}[T_{\text{enter}}^n(B_{\delta}(r))]= O(n^{1/3})$, and the mixing time of the process $X^n$ satisfies $t_{\text{mix}}^n =\Theta(n^{1/3})$.
        \item If $\gamma_0=\gamma_1$, then for any $x \in \mc S^n$, we have $\EE_x[T^n_{\text{enter}}(B_{1/n}(r))]= O(n^{1/2})$ and the mixing time of the process $X^n$ satisfies $t_{\text{mix}}^n =\Theta(n^{1/2})$.
    \end{enumerate}
\end{theorem}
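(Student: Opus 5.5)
The plan is to treat both cases through the local behaviour of the drift near its degenerate root. A degenerate root of multiplicity $m$ has drift of order $|x-r'|^m$ and diffusion of order $1/n$. Balancing $\epsilon^m$ (drift over a window of width $\epsilon$) against $1/(n\epsilon)$ (the diffusive time scale $\epsilon^2/(1/n)$) gives a window $\epsilon\sim n^{-1/(m+1)}$. Time spent there is $\epsilon/\epsilon^m = n^{(m-1)/(m+1)}$, which is $n^{1/3}$ for $m=2$ and $n^{1/2}$ for $m=3$.

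\textbf{Case $\gamma_0\neq\gamma_1$, upper bound.} Let $r'$ be the double root and assume without loss of generality $\gamma_1>\gamma_0$. Lemma~\ref{lem:Meta Lemma} then places $r$ to the right of $r'$. On $\mc S\setminus B_\delta(r)$ we have $\Delta_k\geq 0$ left of $r$, vanishing only at $r'$ quadratically, and $\Delta_k<0$ right of $r$ with $|\Delta_k|\ge c$.
\begin{enumerate}
\item Build a Lyapunov test function $\phi$ with $\phi'\Delta_k^n$ bounded above by $-1$ away from $r'$. The construction is $\phi'(x)=-\min(1/\Delta_k(x), n^{2/3})$ left of $r'$. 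Plugging into the generator expansion \eqref{eq:generator expansion} with $l=3$ gives $\mc G_{X^n}\phi\le -1+O(\sigma\phi''/n)$.
\item Check that the second-order term is $O(1)$ on the $n^{-1/3}$-window, where $\phi''\sim n^{1/3}\cdot n^{2/3}/n^{1/3}$.
\item Dynkin's formula then bounds $\EE_x[T_{\text{enter}}^n]\le C\sup\phi = O(\int \min(1/(x-r')^2,n^{2/3})dx)=O(n^{1/3})$.
\end{enumerate}
An alternative to step 1 is a Stein-type solution of the Poisson equation $\mc G\phi=-1$ via the birth–death explicit formula, bounding the resulting sums by Laplace-type estimates. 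The $\Delta^n_k-\Delta_k=O(1/n)$ error is harmless because it is dominated by $n^{-2/3}$.

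\textbf{Case $\gamma_0\neq\gamma_1$, mixing time.} Once the chain is in $B_\delta(r)$ it is linearly stable and couples with a stationary copy in $O(\log n)$ time, as in the proof of Theorem~\ref{thm:fast_mixing}. Combined with Theorem~\ref{thm:stationary} and Markov's inequality, this gives $t_{\text{mix}}^n=O(n^{1/3})$. For the lower bound, start at $r'-\delta'$ (or just left of $r'$). Use the test function $\psi(x)=x$ and the fact that the drift there is at most $C(x-r')^2$. A second-moment/martingale argument then shows that reaching $r'+n^{-1/3}$ within time $cn^{1/3}$ has small probability. Since $\pi^n$ puts vanishing mass left of $r'$, this forces $d^n(cn^{1/3})>1/4$.

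\textbf{Case $\gamma_0=\gamma_1$.} Here the root is $1/2$ with multiplicity three and is stable, with $\Delta_k\approx -c(x-1/2)^3$.
\begin{enumerate}
\item For the upper bound on entering $B_{1/n}(1/2)$, use the analogous $\phi$ symmetric about $1/2$, with $\phi'=\mathrm{sign}(x-1/2)\min(1/|\Delta_k|, n^{3/4})$. This gives $\EE_x T=O(n^{1/2})$. Reaching exactly distance $1/n$ needs one extra step: from the $n^{-1/4}$ window the chain is essentially a martingale with variance rate $1/n$, so it hits $1/2$ within $O(n^{1/2})$ time by a gambler's-ruin estimate.
\item For mixing, the symmetry $X\mapsto 1-X$ gives a monotone reflection coupling of two copies. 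They meet by the time one crosses $1/2$, so $t_{\text{mix}}^n=O(n^{1/2})$.
\item For the lower bound, show that $\pi^n$ has fluctuations of order $n^{-1/4}$, with density $\propto\exp(-cn(x-1/2)^4)$ from the explicit birth–death stationary measure. A chain started at $1/2+\delta$ needs time $\gtrsim n^{1/2}$ to get within $Cn^{-1/4}$, because the deterministic ODE $\dot y=-cy^3$ takes time $\sim 1/y^2$. Rigorously, control $\EE[(X-1/2)^2]$ through the generator: it decreases at rate at most $C\,\EE(X-1/2)^4+C/n$.
\end{enumerate}

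The main obstacle I expect is the mixing-time lower bounds, and the upper bound in case 1 for starts on the far side of $r'$. There the chain must pass through the bottleneck near the double root, where drift and diffusion balance. This is exactly where a careful generator expansion with a tailored test function is needed, not simple drift arguments.
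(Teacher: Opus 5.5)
\textbf{There is a genuine gap in the first-case upper bound.} Your mixing-time upper bounds follow the paper: enter $B_\delta(r)$ and couple near the linearly stable root, or use reflection coupling when $\gamma_0=\gamma_1$. Your first-case lower bound, if started within $O(n^{-1/3})$ of $r'$, is in the same spirit as the paper's quadratic-functional argument.

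The gap is in the $O(n^{1/3})$ bound for a chain that must \emph{pass through} the double root $r'$.
\begin{enumerate}
\item With $\phi'=-\min(1/\Delta_k,n^{2/3})$, $\phi'$ is constant on the window $|x-r'|\lesssim n^{-1/3}$. So $\phi''=0$ there, and $\mathcal G_{X^n}\phi=-n^{2/3}\Delta_k^n(x)$.
\item This tends to $0$ as $x\to r'$. At $x=r'$ it is of order $n^{-1/3}$ with undetermined sign, because $\Delta_k^n-\Delta_k=O(1/n)$.
\item So $\mathcal G_{X^n}\phi\le -1$ fails exactly inside the bottleneck, and Dynkin's formula gives nothing. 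Checking that the second-order term is $O(1)$ does not help: it would have to be \emph{negative} and bounded away from zero.
\item Just to the right of the window, $\phi''=\Delta_k'/\Delta_k^2>0$, and $\frac{\sigma_k^n}{2n}\phi''\approx \sigma_k/(c\,n(x-r')^3)$ is of order one. It therefore competes with the $-1$.
\end{enumerate}
Your symmetric-case first stage does not suffer from this. There $\phi''<0$ on both sides of $1/2$ and you stop on entering the window. At a saddle-node, by contrast, the window has to be crossed rather than entered.

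\textbf{What is missing, and how the paper supplies it.} Inside the window the negativity must come from the diffusion term. That is, $\phi$ must be strictly concave there, with $\phi''\approx -2n/\sigma_k$, while keeping $\sup\phi=O(n^{1/3})$. This is exactly what solving the Poisson equation $\frac{\sigma}{2n}\phi''+\Delta\phi'=-1$ provides. The paper works in $z=n^{1/3}(x-r_1)$ and uses the uniform bound $\Delta_k\ge\Delta_{\min,\delta}(x-r_1)^2$ on $[\gamma_1,r_1+\delta]$. It then solves the Stein equation $\psi''+\alpha z^2\psi'=-1$. The solution $\psi(z)=\int_z^L\int_{-l}^v e^{-\alpha(v^3-u^3)/3}\,du\,dv$ has the following properties:
\begin{itemize}
\item it is bounded uniformly in $n$, since the double integral reduces to a $\Gamma(1/6)$;
\item $\psi'\le 0$ and $\psi'$ is bounded;
\item it gives $\mathcal G_{X^n}\psi\le -c\,n^{-1/3}$, hence $\EE[\tau_\delta]=O(n^{1/3})$.
\end{itemize}
The alternative you mention only in passing would also work: the exact birth--death solution of $\mathcal G\phi=-1$, with Laplace estimates around the cubic inflection of $\int\log h_k$, is the discrete analogue. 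As written, however, your main route does not prove the bound.

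\textbf{A smaller gap in the symmetric-case lower bound.} Write $Y=X^n-1/2$. The inequality $\frac{d}{dt}\EE Y^2\ge -C\EE Y^4-C/n$ does not close, because Jensen gives $\EE Y^4\ge(\EE Y^2)^2$, which is the wrong direction. There are two ways to repair it.
\begin{itemize}
\item Use the clock $\phi=(x-1/2)^{-2}$, whose generator is $O(1)$ outside the $Kn^{-1/4}$-window, together with $\EE_{\pi^n}Y^4=O(1/n)$ (obtained from $\EE_{\pi^n}[\mathcal G Y^2]=0$).
\item Or argue as the paper does: start at $1/2-n^{-1/4}$, show with $\phi=(x-1/2+n^{-1/4})^2$ that $1/2$ is not reached by time $\delta n^{1/2}$, and use $\pi^n([1/2,1-\gamma])\ge 1/2$ by symmetry.
\end{itemize}
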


\begin{figure}[t]
    \centering
    \includegraphics[width=0.8\linewidth]{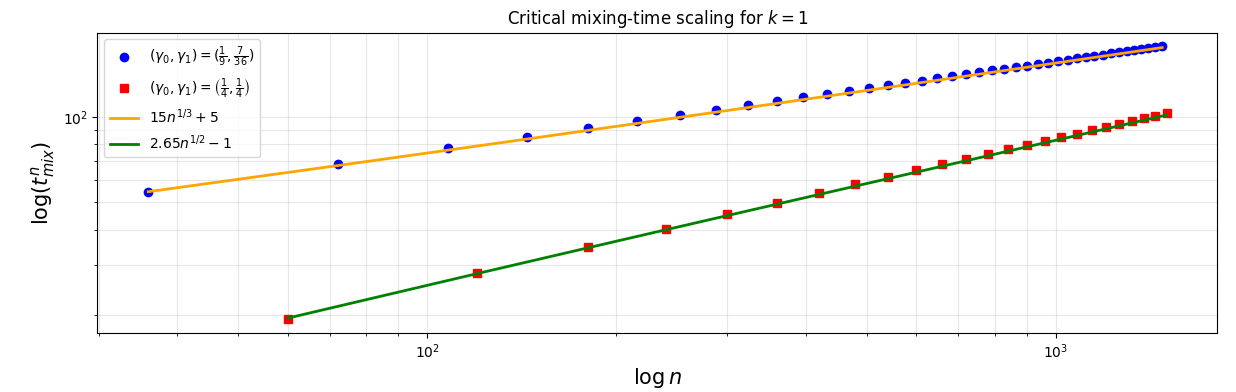}
    \caption{Plot of the logarithm of the mixing time of the process against $\log n$ in sub-critical parameter regime $(\gamma_0, \gamma_1) \in \Gamma_{crit}^{k}$. To compute the mixing time numerically, the transient distribution of $X^n(t)$ was calculated by solving the forward Kolmogorov equation, and the stationary distribution $\pi^n$ was calculated by solving the detailed balance equations.}
    \label{fig:numerical_validation_critical}
\end{figure}

\textit{Discussion on the result:} The above result shows that when $(\gamma_0, \gamma_1) \in \Gamma_{\text{crit}}^k$, unlike the sub- and super-critical regimes, the mixing time of the dynamics grows polynomially with $n$. This is numerically validated in Figure \ref{fig:numerical_validation_critical}. Thus, the non-stubborn agents may retain the majority of the initially prevalent opinion for a duration that grows polynomially with $n$. Although this is smaller than the exponential retention window in the sub-critical case, it is still significantly larger than the logarithmic retention window at supercriticality.

\begin{figure}[!t]
    \centering
    \includegraphics[width=0.8\linewidth]{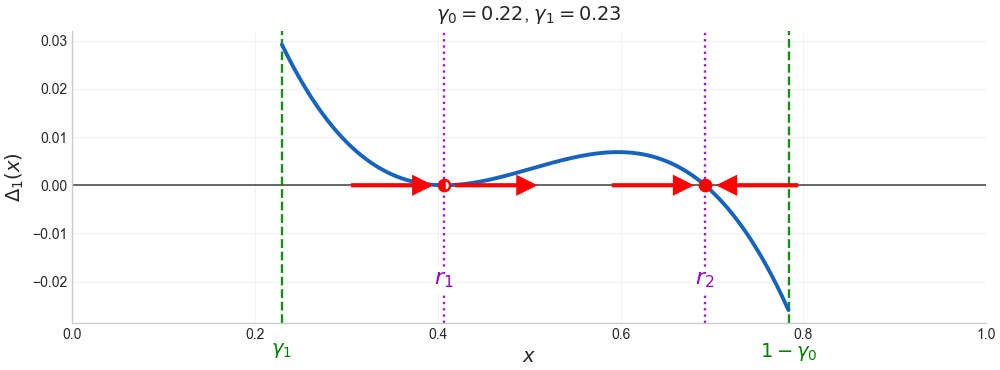}
    \caption{Plot of the limiting drift $\Delta_k$ for $(\gamma_0, \gamma_1) \in \Gamma_{\text{crit}}^k$, with $k=1$. The red arrows indicate the direction of the drift around the roots.}
    \label{fig:drift_critical}
\end{figure}

\textit{Intuitive explanation using the drift and diffusion:} In Figure \ref{fig:drift_critical}, the limiting drift $\Delta_k$ is plotted for $(\gamma_0, \gamma_1) \in \Gamma_{\text{crit}}^k\cap\{\gamma_0< \gamma_1\}$. 
In this case, there exist two distinct real roots of $\Delta_k$ in $\mc S$ of which the higher root $r_2$ has a multiplicity of one and the lower root $r_1$ has a multiplicity of two. We also note that $r_2$ is a stable root since on either side of this root the drift points to the root. However, to reach a neighbourhood of the root $r_2$ from below $r_1$ the process $X^n$ needs to cross the degenerate root $r_1$ where the drift becomes zero. Hence, near the degenerate root $r_1$ the drift alone is not sufficient to push the process $X^n$ toward $r_2$. Near $r_1$, the process is mainly driven by stochastic fluctuations arising from the diffusion term $\sigma_k$. Indeed, choosing $\phi(x)=(x-r_1)^2$ in \eqref{eq:generator expansion} and observing that $\Delta_k(x)\approx c (x-r_1)^2$ for some $c>0$ near the root $r_1$ which has a multiplicity of two we obtain $G_{X^n}\phi(x)\approx c(x-r_1)^3+\sigma_k(r_1)/n$ for all $x$ near the root $r_1$. In particular, when $|x-r_1|\leq 1/n^{1/3}$, the first term $(x-r_1)^3$ becomes $O(1/n)$ which is comparable  in order to the second term $\sigma_k(r_1)/n$. Hence, the rate at which the variance $(x-r_1)^2$ increases in the region $|x-r_1|\leq 1/n^{1/3}$ is $O(1/n)$.
Therefore, crossing the region $|x-r_1|\leq 1/n^{1/3}$ requires at least $\Omega((1/n^{1/3})^2/(1/n))=\Omega(n^{1/3})$ time on average since $\phi$ needs to change by at least $\Omega((1/n^{1/3})^2)$ to cross this region. A similar explanation can be given for the case $\gamma_0=\gamma_1$ where  the drift behaves as $\Delta_k(x)\approx-c(x-r)^3$ for some $c>0$ near the unique root $r=1/2$ and therefore to cross a region of width $\Theta(1/n^{1/4})$ around $r$ it takes at least $\Omega(n^{1/2})$ time on average.

\section{A central lemma}
\label{Sec:Meta Lemma}

Before delving into the proofs of our main results, we state and prove a lemma which is central in all our proofs. The lemma holds for any one-dimensional  birth-death process $X^n$ defined on $\mc S$ whose limiting inifinitesimal drift $\Delta_k$ is a polynomial. We emphasize here that the lemma does not require any global monotonicity property of the limiting inifinitesimal drift $\Delta_k$ and hence is applicable to non-monotone systems.

\begin{lemma}
\label{lem:log_hitting_time}
Suppose that the polynomial $\Delta_k$ has a  simple root $r\in \mc{S}$ that is stable, i.e., there exists an  interval (region of attraction) $[a,b]\subseteq \mc S$ containing the root such that $\Delta_k(x) >0$ for $x\in [a,r)$ and $\Delta_k(x) < 0$ for $x\in (r,b]$. Then the following properties hold.
    \begin{enumerate}

        \item There exist constants $\eta,C_1>0$ such that for all $x\in B_{\eta}(r)$ we have $\Delta_k'(x)<-C_1$.
        Furthermore, there exists a constant $C_2>0$ such that for all $x \in [a,b]$, we have $\Delta_k(x) (x-r)\leq -C_2 (x-r)^2$.

        \item For any $\delta>0$ satisfying $B_\delta(r)\subseteq [a,b]$, there exists $\kappa_\delta>0$  such that $\PP_{x}(T_{\text{exit}}^n(B_{\delta}(r))\leq\exp(\kappa_\delta n))\leq C_4n\exp(-\kappa_\delta n/4)$ for all sufficiently large $n$ and for any $x \in B_{\delta/2}(r)$ where $C_4>0$ is a constant independent of $n$.
        
        \item For any $\delta>0,\epsilon>0$, there exists $n_0(\delta,\epsilon)>0$ such that whenever $n\geq n_0(\delta,\epsilon)$ we have $\EE_{x}[T^n_{\text{enter}}(B_{\delta}(r))\wedge T^n_{\text{exit}}([a,b])]\leq C\log n$ and $\PP_x(T^n_{\text{enter}}(B_\delta(r))>C'\log n/\epsilon)\leq \epsilon$ for all initial state $x\in [a,b]$ where $C$ and $C'$ are positive constants independent of $n$.

        \item  If $X^n$ and $\bar X^n$ are chains evolved independently until their first meeting time and synchronously thereafter, started at $x$ and $\bar x$, respectively, and each following the transition rates~\eqref{eq:uprate}-\eqref{eq:downrate}, then for any $\epsilon>0$ and all $n$ sufficiently large, we have $\PP_{x,\bar x}(X^n(t)\neq \bar X^n(t))\leq \epsilon$ whenever $t> \frac{C_3\log n}{\epsilon}$ for some positive constant $C_3$ and  $x,\bar x\in B_{\eta}(r)$ for sufficiently small $\eta>0$.
    \end{enumerate} 
\end{lemma}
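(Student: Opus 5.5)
We prove the four parts in order, with Lyapunov-type test functions plugged into the generator expansion \eqref{eq:generator expansion}, together with the uniform $O(1/n)$ closeness of $\Delta_k^n$ to $\Delta_k$.

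\emph{Part 1.} Since $r$ is a simple root with $\Delta_k>0$ on $[a,r)$ and $\Delta_k<0$ on $(r,b]$, we have $\Delta_k'(r)<0$. Continuity of the polynomial $\Delta_k'$ then gives $\eta$ and $C_1=|\Delta_k'(r)|/2$. For the second claim, consider $h(x)=\Delta_k(x)/(x-r)$, extended continuously by $h(r)=\Delta_k'(r)$. This $h$ is continuous and strictly negative on the compact set $[a,b]$, so $h\le -C_2$ there. Multiplying by $(x-r)^2$ gives $\Delta_k(x)(x-r)\le -C_2(x-r)^2$.

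\emph{Part 2.} We use the explicit birth--death structure. On $B_\delta(r)$ the ratio $q_-^n(x)/q_+^n(x)=g_k^n/f_k^n$ is at least $1+c$ above $r+\delta/2$ and at most $1-c$ below $r-\delta/2$, by Part 1 and the $O(1/n)$ approximation. Consider the embedded jump chain started in $B_{\delta/2}(r)$. A gambler's-ruin estimate with the product of rate ratios shows that each excursion from $r\pm\delta/2$ reaches $r\pm\delta$ before returning with probability at most $e^{-c'n}$, where $c'$ comes from $\sum\log(1+c)$ over $\Theta(\delta n)$ sites. Set $\kappa_\delta$ to a small multiple of $c'$. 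By a union bound over at most $O(n e^{\kappa_\delta n})$ jumps in time $e^{\kappa_\delta n}$ (total rates are $O(n)$, and a Poisson tail handles the rest), the exit probability is at most $C_4 n e^{-\kappa_\delta n/4}$.

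\emph{Part 3.} Take $\phi(x)=\log\big((x-r)^2+1/n\big)$, or alternatively $\phi(x)=(x-r)^2$ with a multiplicative drift argument. By Part 1 and \eqref{eq:generator expansion}, for $x\in[a,b]\setminus B_\delta(r)$,
\[
\mc G_{X^n}\phi(x)\le \frac{2\Delta_k(x)(x-r)+O(1/n)}{(x-r)^2+1/n}\le -C_2 .
\]
Hence $\phi(X^n(t))+C_2t$ is a supermartingale until $T_{\text{enter}}\wedge T_{\text{exit}}$. Because $\phi$ ranges over an interval of length $O(\log n)$, optional stopping gives $\EE[T]\le C\log n$. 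For the probability bound we would like $\EE[T_{\text{enter}}]$ without the exit truncation. Exit from $[a,b]$ is only a concern near the boundary, where the drift points inward; one can shrink $[a,b]$ slightly inside $\mc S$, or note that $\Delta_k>0$ at $\gamma_1$ by Lemma~\ref{lem:Meta Lemma}. Markov's inequality then gives the $C'\log n/\epsilon$ statement.

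\emph{Part 4 (main obstacle).} The lack of global monotonicity prevents a standard contraction argument, but a local one works on $B_\eta(r)$. Let $D=|X^n-\bar X^n|$ before the chains meet. Under independent evolution, the drift of $D$ is
\[
\operatorname{sign}(X^n-\bar X^n)\big(\Delta_k^n(X^n)-\Delta_k^n(\bar X^n)\big)\le -C_1 D+O(1/n),
\]
by the mean value theorem and Part 1. The variance rate is $\Theta(1/n)$. We therefore apply $\psi(D)=\log(D+1/n)$ and obtain a negative drift bounded by a constant until $D$ reaches $O(1/n)$. This takes $O(\log n)$ expected time. When $D$ is a few multiples of $1/n$, independent $\pm1/n$ jumps with rates $\Theta(n)$ make the chains meet with probability bounded below, so a geometric number of $O(1)$-time attempts suffice. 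Part 2 ensures that neither chain leaves $B_{2\eta}(r)$ on this time scale, except with exponentially small probability. Combining these gives $\EE[\tau_{\text{meet}}]\le C_3\log n$, and Markov's inequality finishes the proof. The delicate step is controlling the diffusion term near $D\sim 1/n$ together with the parity and lattice issue (independent jumps could otherwise let the chains cross without meeting). We handle this by noting that with independent Poisson clocks only one chain jumps at a time, so crossing forces a meeting.
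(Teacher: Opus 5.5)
Your proposal is correct, and it follows the paper closely everywhere except in Part 2. Parts 1 and 3 are the paper's arguments: your $\log\bigl((x-r)^2+1/n\bigr)$ is the paper's $\log\bigl(1+n(x-r)^2\bigr)$ minus $\log n$. Part 4 also uses the paper's function, since $\log(D+1/n)=\log(1+nD)-\log n$.

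\emph{Part 2.} The paper shows that $\exp\bigl(n\tfrac{\theta}{2}(x-r)^2\bigr)$ is a supermartingale on $B_\delta(r)\setminus B_\eta(r)$ for small $\theta$. Optional stopping then gives an $e^{-\kappa_\delta n}$ bound on leaving $B_\delta(r)$ before re-entering $B_\eta(r)$, and the paper compares the number of re-entries with the number of jumps $J(t)$. You get the same per-excursion bound from the exact gambler's-ruin product of rate ratios for the embedded birth--death chain. Your route is more elementary and needs no tuning of $\theta$. The paper's route does not rely on a nearest-neighbour product formula, so it would transfer to chains that lack one.

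\emph{Part 4.} Here you take a longer road than necessary. Bounding $\Delta_k^n(\bar x)-\Delta_k^n(x)$ through $\Delta_k$ introduces an additive $O(1/n)$ error, which is what forces your separate geometric-trials phase once $D=O(1/n)$. The paper avoids this in two ways:
\begin{enumerate}
\item It uses uniform convergence of $(\Delta_k^n)'$ to get $\Delta_k^n(\bar x)-\Delta_k^n(x)\le-\tfrac{C_2}{2}(\bar x-x)$ with no additive term.
\item It uses concavity of $\log(1+nD)$ on $D\ge 0$ to discard the second-order terms.
\end{enumerate}
Together these give a generator bound of $-C_2/4$ for every $D\ge 1/n$, so Dynkin's formula runs straight down to the meeting time. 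The step you call delicate is therefore harmless: concavity makes the noise help rather than hurt.

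\emph{Part 3.} One point should be made precise. Take an endpoint of $[a,b]$ that lies in the interior of $\mc S$. For starts near it, the chain leaves $[a,b]$ on its first jump with probability bounded below, so the exit truncation cannot simply be dropped. The fix is:
\begin{enumerate}
\item Use $\Delta_k(a)>0>\Delta_k(b)$ and continuity to pass to a slightly larger region of attraction intersected with $\mc S$; the chain cannot cross the endpoints of $\mc S$ at all.
\item Run the logarithmic Lyapunov bound on this enlarged interval.
\item Show, by a Part 2-type excursion estimate near the endpoints, that exit from the enlarged interval within time $O(\log n)$ has vanishing probability.
\end{enumerate}
The paper is equally brief here. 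Its appeal to Part 2 only controls exit from $B_\delta(r)$ for starts in $B_{\delta/2}(r)$, not exit from $[a,b]$ for arbitrary starts.
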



\subsection{Proof of the first statement}

Since $\Delta_k$ has exactly one real root $r$ with multiplicity one in $[a,b]$, we must have $\Delta_k(x)=(x-r)R_k(x)$ for some polynomial $R_k$ such that $R_k(r)\neq 0$. Furthermore, since we have $\Delta_k(x)>0$ for $x\in [a, r)$ and $\Delta_k(x)<0$ for $x\in (r,b]$ we must have $R_{k}(x) < 0$ for all $y\in [a,b]-\{r\}$. The continuity of $R_k$ at $r$ combined with the fact $R_k(r)\neq 0$ then implies that we must have $\Delta'(r)=R_k(r)<0$. Let $\Delta_k'(r)=-2C_1$ where $C_1>0$ is some constant. Then, the continuity of $\Delta'_{k}$ at $r$ implies that there exists $\eta>0$ such that for all $y\in B_{\eta}(r)$ we have $\Delta'_{k}(y)< -C_1$.
Now, we define the map $x\mapsto T(x)$ on the compact set $[a,b]$ as
    \begin{equation}
        T(x)=\begin{cases}
            &\frac{\Delta_{k}(x)}{x-r} \text{ for } x\neq r,\\
            &\Delta_{k}'(x) \text{ for } x= r\nonumber.
        \end{cases}
    \end{equation}
Clearly, $T$ is strictly negative and continuous on the compact set $[a,b]$. Hence, there exists $C_2>0$ such that $T(x) \leq-C_2$ for all $x\in [a,b]$. Hence, we have $\Delta_k(x)\leq -C_2(x-r)$ for all $x\in (r,b]$ and $\Delta_k(x)\geq C_2(r-x)$ for $x\in [a,r)$. This completes the proof of the first statement.

\subsection{Proof of the second statement}

Consider the Lyapunov function $\phi^n_\theta(x)=\exp\brac{n\frac{\theta}{2}(x-r)^2}$ for $\theta\in (0,\bar\theta)$. 
It is easy to see that
\begin{equation*}
    \phi^n_\theta\left(x\pm \frac{1}{n}\right)=\phi^n_\theta(x)\exp\brac{\pm\theta(x-r)+\frac{\theta}{2n}}.
\end{equation*}
Hence, applying the generator $\mc G_{X^n}$ to $\phi^n_\theta$ we obtain
\begin{align}
    G_{X^n}\phi^n_\theta(x)&=nf_k^n(x)\phi^n_\theta(x)\brac{\exp\brac{\theta(x-r)+\frac{\theta}{2n}}-1}\nonumber\\
    &\hspace{2em}+ng_k^n(x)\phi^n_\theta(x)\brac{\exp\brac{-\theta(x-r)+\frac{\theta}{2n}}-1}.\nonumber
\end{align}
Now, using the fact that $\exp(z)\leq1+z+C_M z^2$ for some $C_M>0$ for all $z\in [-M,M]$
and the definitions of $\Delta_k^n$ and $\sigma_k^n$ we obtain for all sufficiently large $n$
\begin{align}
    G_{X^n}\phi^n_\theta(x)\leq n\phi^n_\theta(x)\brac{\Delta_k^n(x)\theta(x-r)+\frac{\theta}{2n}\sigma_k^n(x)+C_{\bar \theta,\delta}\theta^2},
    \label{eq:gen_exp_bound}
\end{align}
for some constant $C_{\bar \theta,\delta}>0$ since $\theta|x-r|+\theta/2n\leq \bar2\theta\delta$ for $x\in B_{\delta}(r)$ and $n$ sufficiently large.
But from the first part of the lemma and the fact that $|\Delta_k^n(x)-\Delta_k(x)|< \frac{C}{n}$
we can bound the first term in the parenthesis on the RHS as $\Delta_k^n(x)(x-r)\leq \Delta_k(x)(x-r)+\frac{C}{n}(x-r)\leq -C_2 (x-r)^2+\frac{C}{n}(x-r)\leq -\frac{C_2}{2}(x-r)^2$ where to obtain the last inequality we assume $|x-r|\geq \eta$ for some $\eta \in (0,\delta/2)$ and choose $n>\frac{2C}{\eta C_2}$. Similarly, since $\sigma_k^n$ converges uniformly to $\sigma_k$ and $\sigma_{\min}<\sigma_k(x)<\sigma_{\max}$ for all $x\in \mc S$ for some $\sigma_{\min},\sigma_{\max}>0$ it follows that for all sufficiently large $n$ we have $\sigma_{\min} < \sigma_k^n(x)<\sigma_{\max}$. Hence, from~\eqref{eq:gen_exp_bound} we obtain
\begin{align}
  G_{X^n}\phi^n_\theta(x)\leq  n\phi^n_\theta(x)\brac{-\frac{C_2\theta}{2}(x-r)^2+\frac{\theta}{2n}\sigma_{\max}+C_{\bar \theta,\delta}\theta^2}.\nonumber
\end{align}
Hence, for $|x-r|\geq\eta$ we obtain
\begin{align*}
    G_{X^n}\phi^n_\theta(x)\leq  n\phi^n_\theta(x)\brac{-\frac{C_2\theta}{2}\eta^2+\frac{\theta}{2n}\sigma_{\max}+C_{\bar \theta,\delta}\theta^2}.
\end{align*}
Now choosing $n> \frac{2\sigma_{\max}}{C_2 \eta^2}$ and $\theta<\frac{C_2\eta^2}{4C_{\bar \theta,\delta}}$ we obtain
$$G_{X^n}\phi^n_\theta(x)\leq  n\phi^n_\theta(x)\brac{-\frac{C_2\theta}{4}\eta^2+C_{\bar \theta,\delta}\theta^2}<0.$$
Thus, for suitable choice of $\theta\in \brac{0,\bar\theta \wedge \frac{C_2\eta^2}{4C_{\bar \theta,\delta}}}$, we have
$G_{X^n}\phi^n_\theta(x)\leq 0$ for all $x\in B_{\delta}(r)-B_{\eta}(r)$ and all sufficiently large $n$. This implies that $(\phi^n_\theta(X^n(t\wedge T^n_\text{enter}(B_\eta(r))\wedge T^n_\text{exit}(B_\delta(r)))),t\geq 0)$ is a super-martingale. Therefore, applying the optional sampling theorem we obtain $$\EE_x\sbrac{\phi^n_\theta(X^n(T^n_\text{enter}(B_\eta(r))\wedge T^n_\text{exit}(B_\delta(r))))}\leq \phi^n_\theta(x)\leq \exp\brac{n\frac{\theta\delta^2}{8}},$$ for any starting state $x\in B_{\delta/2}(r)$. But 
$$\EE_x\sbrac{\phi^n_\theta(X^n(T^n_\text{enter}(B_\eta(r))\wedge T^n_\text{exit}(B_\delta(r))))}\geq \exp\brac{n\frac{\theta\delta^2}{2}}\PP_x\brac{T^n_{\text{exit}}(B_{\delta}(r))\leq T^n_{\text{enter}}(B_\eta(r))}.$$
Hence, we have
\begin{equation}
    \PP_x\brac{T^n_{\text{exit}}(B_{\delta}(r))\leq T^n_{\text{enter}}(B_\eta(r))}\leq \exp\brac{-n\kappa_\delta}, \quad \forall x\in B_{\delta/2}(r)-B_\eta(r)\nonumber,
\end{equation}
where $\kappa_\delta=\frac{3\theta\delta^2}{8}$.
Hence, if $N$ denotes the number of times the chain enters the set $B_{\eta}(r)$ before exiting $B_{\delta}(r)$ starting from $x\in B_{\delta/2}(r)$, then $\PP_x(N\leq m)=1-\PP_x(N> m)\leq 1-(1-\exp(-\kappa_\delta n))^{m+1}\leq (m+1)\exp(-\kappa_\delta n)$. Let $J(t)$ denote the number of times the process $X^n(t)$ jumps by time $t$. Since the total jump rate is bounded above by $n\sigma_{\max}$, we have $\EE_x[J(t)]\leq n\sigma_{\max}t$.
Now since for any $m\in \NN$, we have $\{T^n_{\text{exit}}(B_\delta(r))\leq t\}\subseteq \{N\leq J(t)\}\subseteq \{N\leq m\}\cup \{J(t)\geq m\}$ it follows that
$\PP_x(T^n_{\text{exit}}(B_\delta(r))\leq t)\leq \PP_x(N\leq m)+\PP_x(J(t)\geq m)\leq (m+1)\exp(-\kappa_\delta n)+\EE_{x}[J(t)]/m\leq (m+1)\exp(-\kappa_\delta n)+n\sigma_{\max}t/m$. Now choosing $t=\exp(\frac{\kappa_\delta n}{4})$ and $m=\ceil{\exp(\kappa_\delta n/2)}$ gives 
\begin{equation}
\label{eq:exit time bound}
\PP_x\left(T^n_{\text{exit}}(B_\delta(r))\leq \exp\left(\frac{\kappa_\delta n}{4}\right)\right)\leq n\sigma_{\max}\exp\left(-\frac{\kappa_\delta n}{4}\right)+2\exp(-\kappa_\delta n)+\exp\left(-\frac{\kappa_\delta n}{2}\right),
\end{equation}
from which the second statement follows.

\subsection{Proof of the third statement} 

We consider the drift of the Lyapunov function $\phi:\mc S\to \RR$ defined as 
$\phi(x)=\log(1+n(x-r)^2)$. Clearly, we have for $|x-r|\geq \delta$
\begin{align*}
    \phi'(x)&=\frac{2n(x-r)}{1+n(x-r)^2}\\
    \phi''(x)&=\frac{2n(1-n(x-r)^2)}{(1+n(x-r)^2)^2}\leq \frac{2n}{(1+n(x-r)^2)}\leq \frac{2n}{1+n\delta^2}\leq \frac{2}{\delta^2}.
\end{align*}
Hence, applying the generator $G_{X^n}$ on $\phi$ and using the above bound we have for $x\in [a,b]-B_\delta(r)$
\begin{align}
G_{X^n}\phi(x)&\leq \Delta_k^n(x)\frac{2n(x-r)}{1+n(x-r)^2}+\frac{\sigma_k^n(x)}{2n} \frac{2}{\delta^2}\nonumber\\
              &\leq \Delta_k(x)\frac{2n(x-r)}{1+n(x-r)^2}+\frac{2C(x-r)}{1+n(x-r)^2}+\frac{\sigma_{\max}}{n\delta^2}\nonumber\\
              &\overset{(a)}{\leq} -\frac{2nC_2(x-r)^2}{1+n(x-r)^2}+\frac{2C|x-r|}{1+n(x-r)^2}+\frac{\sigma_{\max}}{n\delta^2}\nonumber\\
              &\overset{(b)}{\leq} -\frac{2nC_2(x-r)^2}{2n(x-r)^2}+\frac{2C|x-r|}{n(x-r)^2}+\frac{\sigma_{\max}}{n\delta^2}\nonumber\\
              &\leq -C_2+\frac{2C}{n\delta}+\frac{\sigma_{\max}}{n\delta^2}\nonumber\\
              &\leq -\frac{C_2}{2},\label{eq:negative_drift}
\end{align}
where (a) follows from the first statement of the lemma, (b) follows by choosing  $n\geq 1/\delta^2$ so that $n(x-r)^2\geq n\delta^2\geq 1$, and the last line again follows by choosing $n$ sufficiently large.
Hence, defining $\tau=T^n_{\text{enter}}(B_\delta(r))\wedge T^n_{\text{exit}}([a,b])$ and applying Dynkin's formula~\cite{Kallenberg_book} we obtain for any $t\geq 0$ 
\begin{align*}
\EE_x[\phi(X^n(t\wedge \tau))]&=\EE_x[\phi(X^n(0))]+\EE_x\sbrac{\int_0^{t\wedge \tau}\mc G_{X^n}\phi(X^n(s))ds}\\
&\leq \log(1+n(b-a)^2)-\frac{C_2\EE_x[t\wedge \tau]}{2},
\end{align*}
where in the last inequality we have used $\phi(x)\leq \log(1+n(b-a)^2)$ for any $x\in [a,b]$ and
\eqref{eq:negative_drift}. Now, using $\EE_x[\phi(X^n(t\wedge \tau))]\geq 0$ we obtain
$\EE_x[t\wedge \tau]\leq (2/C_2)\log(1+n(b-a)^2)$. Since this is true for all $t\geq 0$, using dominated convergence theorem it follows that $\EE_x[\tau]\leq (2/C_2)\log(1+n(b-a)^2)\leq C'\log n$ for some $C'>0$ and all sufficiently large $n$ which proves the expectation bound in the statement. To complete the proof we note that using Markov inequality gives $\PP_x(\tau\geq C'\log n/\epsilon)\leq \epsilon$.
But we have
\begin{align*}
\PP_x(\tau\geq t)&=\PP_x(T^n_{\text{enter}}(B_\delta(r))\geq t, T^n_{\text{exit}}([a,b])\geq t)\\
& \geq \PP_x(T^n_{\text{enter}}(B_\delta(r))\geq t)- \PP_x(T^n_{\text{exit}}([a,b])< t).
\end{align*}
Now choosing $t=C'\log n/\epsilon$ and using the previous part of the lemma we obtain
$\PP_x(\tau\geq C'\log n/\epsilon)\leq \PP_x(T^n_{\text{enter}}(B_\delta(r))\geq C'\log n/\epsilon)- o(1)$. Thus, $\PP_x(T^n_{\text{enter}}(B_\delta(r))\geq C'\log n/\epsilon)\leq \epsilon+o(1)$ from which the statement of the lemma follows immediately.

\subsection{Proof of the last statement} 

Consider the joint chain $(X^n,\bar X^n)$ and assume without loss of generality that $X^n(0)=x\leq \bar x =\bar X^n(0)$. Furthermore, assume that both $x$ and $\bar x$ are in $B_{\eta}(r)$ where $\eta>0$ is such that $\Delta_k'(x)<-C_2$ for all $x\in B_{2\eta}(r)$. Note that the existence of such $\eta$ is guaranteed by the first part of the lemma. 
Now, since $(\Delta_k^n)'$ uniformly converges to $\Delta_k'$ and $\Delta_k'(x)<-C_2$ for all $x\in B_{\eta}(r)$, there exists $n_0(C_2)$ such that we have $(\Delta_k^n)'(x)<-C_2/2$ for all $n\geq n_0(C_2)$ and all $x\in B_{\eta}(r)$. Hence, for  all $n\geq n_0(C_2)$ and $x,\bar x\in B_{\eta}(r)$ with $\bar x\geq  x$ we have
\begin{equation}
    \Delta_k^n\brac{\bar x}-\Delta_k^n\brac{x}=\int_{x}^{\bar x} (\Delta_k^n)'(u)du\leq -\frac{C_2}{2}\brac{\bar x-x}.
    \label{eq:contract}
\end{equation}

Now, define the Lyapunov function $\phi:\mc S^2\to \RR$ as $\phi(x,\bar x)=\log(1+n|\bar x - x|)$. If $G_{X^n,\bar X^n}$ denote the generator of the joint chain, then, due to the independence of the two chains $X^n$ and $\bar X^n$, we have
\begin{align}
    \mc G_{X^n,\bar X^n}\phi(x,\bar x)&=\mc G_{\bar X^n} \phi(x,\bar x)+\mc G_{X^n} \phi(x,\bar x)\nonumber\\
    &\leq\Delta_k^n(\bar x)\partial_{\bar x}\phi(x,\bar x)+\frac{\sigma_k^n(\bar x)}{2n}\sup_{(x,\bar x)\in \mc S^2}{\partial_{\bar x}^2\phi(x,\bar x)}\nonumber\\
    &\hspace{2em}+\Delta_k^n(x)\partial_{x}\phi(x,\bar x)+\frac{\sigma_k^n(x)}{2n}\sup_{(x,\bar x)\in \mc S^2}{\partial_{x}^2\phi(x,\bar x)},\nonumber
\end{align}
where $\partial_z^i$ denotes the $i^{\textrm{th}}$ partial derivative w.r.t $z$.
It is easy to see that for all $\bar x \geq x$ we have
\begin{align}
    &\partial_{\bar x}\phi(x,\bar x)=\frac{n}{1+n(\bar x-x)}=-\partial_x\phi(x,\bar x),\nonumber\\
    &\partial_{\bar x}^2\phi(x,\bar x)=-\frac{n^2}{(1+n(\bar x-x))^2}=\partial_x^2\phi(x,\bar x)<0,\nonumber
\end{align}
Hence, for all $x,\bar x$ satisfying $\bar x-x\geq 1/n$ we have
\begin{align}
\mc G_{X^n\bar X^n}\phi(x,\bar x)\leq \frac{n}{1+n(\bar x-x)}(\Delta_k^n(\bar x)-\Delta_k^n(x))\leq -\frac{nC_2(\bar x -x)}{2(1+n(\bar x-x))}\leq -\frac{C_2}{4}\nonumber,
\end{align}
where in the second inequality we use~\eqref{eq:contract} and in the last inequality we use the fact $\bar x-x\geq 1/n$.

We define $T_{\text{couple}}^n=\inf\{t\geq 0: X^n(t)=\bar X^n(t)\}$,
$T_{\text{exit},\eta}^n=\inf\{t\geq 0: X^n(t)\notin B_{2\eta}(r) \text{ or } \bar X^n(t)\notin B_{2\eta}(r)\}$, and $\tau=T_{\text{couple}}^n\wedge T_{\text{exit},\eta}$. Then applying Dynkin's formula as in the previous part of the lemma we obtain
\begin{equation}
    \EE_{x,\bar x}[\tau]\leq\frac{4\log(1+n(\bar x-x))}{C_2}\leq \frac{4\log(1+2n\eta)}{C_2}\leq C_3\log n\nonumber,
\end{equation}
for all $(x,\bar x)\in B_\eta(r)\times B_\eta(r)$ with $\bar x > x$ for all sufficiently large $n$. Now, using the above and the same line of arguments as in the proof of the previous part it can be shown that $\PP_{x,\bar x}(T_{\text{couple}}^n\geq C'\log n/\epsilon)\leq 2\epsilon$ for any $\epsilon>0$ and all sufficiently large $n$. This completes the proof of the lemma.

\section{Proof of Theorem 4}
\label{proof:stationary}

Since $X^n$ is a birth-death process, for any $x\in [\gamma_1,1-\gamma_0]$ we have
\begin{align*}
    \pi^n\brac{\frac{\floor{nx}}{n}} &= \frac{1}{Z^n} \prod_{j=N_1^n}^{\floor{nx}-1}\frac{q_+^n(j/n)}{q_-^n((j+1)/n)}\\
    &=\frac{1}{Z^n} \prod_{j=N_1^n}^{\floor{nx}-1}\frac{f_k^n(j/n)}{g_k^n((j+1)/n)}\\
    &=\frac{1}{Z^n}\exp\brac{\sum_{j=N_1^n}^{\floor{nx}-1}\log\frac{f_k^n(j/n)}{g_k^n((j+1)/n)}},\\
    &=\frac{1}{Z^n}\exp\brac{n\int_{\gamma_1}^{x}\log(h_k(z))dz+O(\log n)},
\end{align*}
where $Z^n$ is a normalising constant and $h_k=f_k/g_k$. 
Hence, we have
\begin{align}
    \pi^n([\gamma_1,x])=\frac{\sum_{N_1^n\leq a\leq \floor{nx}}\exp\brac{n\int_{\gamma_1}^{a/n}\log(h_k(z))dz+O(\log n)}}{\sum_{N_1^n\leq a\leq n-N_0^n}\exp\brac{n\int_{\gamma_1}^{a/n}\log(h_k(z))dz+O(\log n)}}.\nonumber
\end{align}
From the above, it follows that if $y\mapsto \int_{\gamma_1}^y\log(h_k(z))dz$ is maximised uniquely at
$y^*\in[\gamma_1,1-\gamma_0]$, then $\pi^n([\gamma_1,x])\to 0$ for all $x< y^*$ and $\pi^n([\gamma_1,x])\to 1$ for all $x> y^*$ implying that $\pi^n \Rightarrow \delta_{y^*}$. Similarly, if the maximiser is not unique and the integral $\int_{\gamma_1}^y\log(h_k(z))dz$ is maximised at $m$ discrete values of $y$ given by $\{y_1^*,y_2^*,\ldots,y_m^*\}$, then we have $\pi^n\Rightarrow \frac{1}{m}\sum_{j=1}^m \delta_{y_j^*}$. 
Hence, to establish the theorem it is sufficient to prove the following statements.
\begin{enumerate}
    \item For $\gamma_0=\gamma_1$, the integral $\int_{\gamma_1}^y\log(h_k(z))dz$ is maximised both at $y=r_1$ and $y=r_2$ where $r_1$ and $r_2$ are the minimum and the maximum root of $\Delta_k$ in the interval $[\gamma_1,1-\gamma_0]$.
    \item For $\gamma_0<\gamma_1$, the integral $\int_{\gamma_1}^y\log(h_k(z))dz$ is maximised at $y=r_2$.
    \item For $\gamma_0>\gamma_1$, the integral $\int_{\gamma_1}^y\log(h_k(z))dz$ is maximised at $y=r_1$.
\end{enumerate}
Below we prove each of the above statements in the order they appear above.

When $\gamma_0=\gamma_1$, we have $h_k(z)=1/h_k(1-z)$ and Lemma 1 implies $r_2=1-r_1$.
Hence, $\int_{r_1}^{r_2} \log(h_k(z))dz=\int_{r_1}^{1/2} \log(h_k(z))dz+\int_{1/2}^{1-r_1} \log(h_k(z))dz=\int_{r_1}^{1/2} \log(h_k(z))dz-\int_{1/2}^{r_1} \log(h_k(1-u))du=0$. 
Furthermore, by Lemma 1 we have $\Delta_k(z) >0$ when $z <r_1$ and $\Delta_k(z) <0$ for $z>r_2$. But since  $\log h_k(z)$ and $\Delta_k(z)$ have the same sign for all $z\neq r_1,r_2$, it follows that
the integral $\int_{\gamma_1}^y\log(h_k(z))dz$ is maximised at both $y=r_1$ and $y=r_2$. This proves the second statement.

When $\gamma_0<\gamma_1$, we show that $\int_{r_1}^{r_2} \log(h_k(z))dz >0$. This will then establish the third statement following similar line of arguments as used in the proof of the previous statement. We define $I(\gamma_0,\gamma_1)=\int_{r_1}^{r_2} \log(h_k(z))dz$. We study the function $I(\gamma_0,\gamma_1)$ by varying $\gamma_1$ while keeping $\gamma_0$ constant. We note that when $\gamma_1=\gamma_0$, we have already shown that $I(\gamma_0,\gamma_0)=0$. Now, from the definition of $h_k$, we have
\begin{align*}
        \frac{\partial}{\partial{\gamma_1}}I(\gamma_0, \gamma_1)&=-\int_{r_1}^{r_2}\frac{\partial}{\partial{\gamma_1}} \log((y-\gamma_1)I_{1-y}(k+1,k))dy\nonumber\\
        &\hspace{2cm}-\log(h_k(r_2))\frac{\partial r_2}{\partial{\gamma_1}}+\log(h_k(r_1))\frac{\partial r_1}{\partial{\gamma_1}}\\
        &=\int_{r_1}^{r_2}\frac{1}{y-\gamma_1}dy\\
        & = \log\left( {y-\gamma_1}  \right)\bigg ]_{r_1}^{r_2}\\
        &=\log \left( \frac{r_2-\gamma_1}{r_1-\gamma_1} \right) >0,
\end{align*}
where the second line follows since $\log h_k(r_1)=\log h_k(r_2)=0$. Hence, $I(\gamma_0,\gamma_1)$ is strictly increasing with respect to $\gamma_1$. Combined with the fact that $I(\gamma_0,\gamma_0)=0$, it shows that $I(\gamma_0,\gamma_1)>0$ for $\gamma_1>\gamma_0$. The last statement follows using similar line of arguments as above by establishing $\int_{r_1}^{r_2}\log(h_k(z))dz <0$ when $\gamma_0>\gamma_1$. \qed

\section{Proof of Theorem 5}
\label{sec:subcriticality}

By the first statement of Lemma~\ref{lem:Meta Lemma} and the definitions of $r_1, r_2$ it follows that 
both $r_1$ and $r_2$ are simple and stable roots of $\Delta_k$. It also follows from Lemma~\ref{lem:Meta Lemma} that for all sufficiently small $\xi>0$ the  intervals $[\gamma_1,r_m-\xi]$ and $[r_m+\xi, 1-\gamma_0]$ are regions of attraction of $r_1$ and $r_2$, respectively. 
Now, for any constant $p\neq r_m$ there exists $\xi>0$ such that $p$ lies either in $[\gamma_1,r_m-\xi]$ which corresponds to the region of attraction of $r_1$ or it lies in $[r_m+\xi, 1-\gamma_0]$ which corresponds to the region of attraction of $r_2$. Therefore, by the third statement of the Lemma~\ref{lem:log_hitting_time} it follows that  $\PP_p(T_{\text{enter}}^n(B_{\delta}({r^*}))>C\log n/\epsilon)\leq \epsilon$ for some positive constant $C$ and for all sufficiently large $n$  where $r^*=r_1\indic{p<r_m}+r_2\indic{p>r_m}$. Now since $r_i$ is a stable root for each $i\in \{1,2\}$, by the second statement of Lemma~\ref{lem:log_hitting_time} it follows that $\PP_x(T_{\text{exit}}^n(B_\delta(r_i))\leq \exp(\kappa_\delta n))=o(1)$ for any $x\in B_{\delta/2}(r_i)$. This shows that $T_{\text{exit}}^n(B_\delta(r_i))=\Omega(\exp(\Theta(n)))$ w.h.p. for any starting state $X^n(0)\in B_{\delta/2}(r_i)$.

To establish the mixing time bound, we observe that Theorem~\ref{thm:stationary} implies that for some $i\in \{1,2\}$ we have $\pi^n(B_\delta(r_i))\geq 1/3$ for all sufficiently large $n$ and for all sufficiently small $\delta >0$. But from the definition of maximum total variation distance it follows that $d^n(t)\geq \pi^n(B_\delta(r_i))-\PP_x(X^n(t)\in B_\delta(r_i))\geq 1/3-\PP_x(T^n_{\text{enter}}(B_\delta(r_i))\leq t)$. Now, if $r_{-i}$ is the only element of the set $\{r_1,r_2\}-\{r_i\}$ then for $x \in B_{\delta/2}(r_{-i})$ and sufficiently small $\delta$ from the second statement of Lemma~\ref{lem:log_hitting_time} we have $\PP_x(T^n_{\text{enter}}(B_\delta(r_i))\leq \exp(\kappa_\delta n))\leq \PP_x(T^n_{\text{exit}}(B_{\delta}(r_{-i}))\leq \exp(\kappa_\delta n))=o(1)$. Hence, $d^n(\exp(\kappa_\delta n))\geq 1/3-o(1)\geq 1/4$ which implies that $t_{\text{mix}}^n=\Omega(\exp(\Theta(n)))$. This completes the proof of the theorem.\qed

\section{Proof of Theorem 6}
\label{sec:supercriticality}

From the first statement of Lemma~\ref{lem:Meta Lemma} it follows that  $r$ is a stable root and the whole interval $\mc S$ is the region of attraction of $r$. Hence, from the third statement of Lemma \ref{lem:log_hitting_time}, it follows that for any $x \in \mc S^n$ we have $\EE_{x}[T^n_{\text{enter}}(B_{\delta}(r))]=\EE_{x}[T^n_{\text{enter}}(B_{\delta}(r))\wedge T^n_{\text{exit}}(\mc S)]  \leq C\log n.$

We now prove the mixing time upper bound. Let $\eta > 0$ be as in the last statement of Lemma \ref{lem:log_hitting_time}, and let $\delta' \in (0,\eta)$. Consider two chains $X^n$ and $\bar X^n$ initialised at $x,\bar x \in \mc S^n$, respectively, and evolving independently until they first meet after which they move together. Let $T_{\text{couple}}^n:=\inf \{t\geq 0: X^n(t)=\bar X^n(t)\}$ be the first meeting time of the chains. Using the coupling lemma for Markov chain mixing~\cite{peres_book}, we have
\begin{equation}
    d^n(t)\leq \max_{x,\bar x}\PP_{x,\bar x}(X^n(t)\neq \bar X^n(t))=\max_{x,\bar x}\PP_{x,\bar x}(T_{\text{couple}}^n>t).\nonumber
\end{equation}
Thus, to establish the lemma it is sufficient to show that for every $\epsilon>0$ there exists a constant $C\equiv C(\epsilon)>0$ such that $\PP_{x,\bar x}(T_{\text{couple}}>C\log n)<\epsilon$ for all starting states $x, \bar x \in \mc S^n$. Similar to the chain $X^n$ we denote the first entry time to and first exit from set $B\subseteq \mc S$ of the chain $\bar X^n$ as $\bar T^n_{\text{enter}}(B)$ and $\bar T^n_{\text{exit}}(B)$, respectively.

Define the event $G(t):=\{X^n(t/2)\in B_{\delta'}(r), \bar X^n(t/2) \in B_{\delta'}(r)\}$. Then the complement $G(t)^c$ satisfies
$G(t)^c\subseteq \{T^n_{\text{enter}}(B_{\delta'}(r))>t/2\}\cup \{\bar T^n_{\text{enter}}(B_{\delta'}(r))>t/2\}\cup \{T_{\text{enter}}^n (B_{\delta'}(r)) \leq t/2, X^n(t/2)\notin B_{\delta'}(r)\}\cup \{\bar T_{\text{enter}}^n (B_{\delta'}(r)) \leq t/2, \bar X^n(t/2)\notin B_{\delta'}(r)\}$. Now, for $t=2C'\log n/\epsilon$ it follows from the third statement of Lemma~\ref{lem:log_hitting_time} that
for any $x,\bar x\in \mc S$ both $\PP_{x}(T^n_{\text{enter}}(B_{\delta'}(r))>t/2)$ and $\PP_{x}(\bar T^n_{\text{enter}}(B_{\delta'}(r))>t/2)$ are bounded above by $\epsilon$. To bound the probability of the event $\{T_{\text{enter}}^n (B_{\delta'}(r)) \leq t/2, X^n(t/2)\notin B_{\delta'}(r)\}$ we observe that
\begin{align}
        &\nonumber\PP_{x}\brac{T_{\text{enter}}^n (B_{\delta'}(r)) \leq t/2, X^n(t/2)\notin B_{\delta'}(r)}\\
        & \nonumber\leq\PP_{x}\brac{T_{\text{enter}}^n (B_{\delta'}(r)) \leq t/2, X^n(t/2)\notin B_{\delta'}(r), T_{\text{enter}}^n (B_{\delta'/2}(r))\leq t/2} +\PP_{x}\brac{T_{\text{enter}}^n (B_{\delta'/2}(r))> t/2}\\
        & \nonumber\leq \underbrace{\sup_{y \in B_{\delta'/2}(r)} \PP_{y}\brac{T^n_{\text{exit}}(B_{\delta'}(r))\leq t/2}}_{(a)}+\underbrace{\PP_{x}\brac{T_{\text{enter}}^{n}(B_{\delta'/2}(r))> t/2}}_{(b)}.
\end{align}
Using the second statement of Lemma~\ref{lem:log_hitting_time}, we can bound term (a) as follows. 
\begin{align*}
\PP_{y}\brac{T^n_{\text{exit}}(B_{\delta'}(r))\leq t/2}) &=  \PP_{y}\brac{T^n_{\text{exit}}(B_{\delta'}(r))\leq C'\log n/\epsilon}\\
& \leq \PP_{y}\brac{T^n_{\text{exit}}(B_{\delta'}(r))\leq \exp\left(\frac{\kappa_\delta n}{4}\right)}\\
& \overset{(a)}{\leq} n\sigma_{\max}\exp\left(-\frac{\kappa_\delta n}{4}\right)+2\exp(-\kappa_\delta n)+\exp\left(-\frac{\kappa_\delta n}{2}\right),
\end{align*}
where (a) follows from \eqref{eq:exit time bound}. 
Since this bound does not depend on $y$, for all sufficiently large $n$ we have
$\sup_{y\in B_\delta'(r)}\PP_{y}\brac{T^n_{\text{exit}}(B_{\delta'}(r))\leq t/2}\leq \epsilon$.
We use the third statement of Lemma~\ref{lem:log_hitting_time} to bound term (b) above by $\epsilon$. Hence, we have $\PP_{x}\brac{T_{\text{enter}}^n (B_{\delta'}(r)) \leq t/2, X^n(t/2)\notin B_{\delta'}(r)}\leq 2\epsilon$ for all sufficiently large $n$ and all starting states $x$. We can similarly show that $\PP_{\bar x}(\bar T_{\text{enter}}^n (B_{\delta'}(r)) \leq t/2, \bar X^n(t/2)\notin B_{\delta'}(r))\leq 2\epsilon$ for all $\bar x\in \mc S$ and all sufficiently large $n$.
Hence, combining the above, we have that for any $\epsilon >0$
\begin{equation}
    \PP_{x,\bar x}(G^c(2C'\log n/\epsilon))\leq 6\epsilon,\nonumber
\end{equation}
for all sufficiently large $n$ and all initial states $x$ and $\bar x$.

Now, we focus on the event $T_{\text{couple}}^n>t$. We have
\begin{align}
    \nonumber \PP_{x,\bar x}(T_{\text{couple}}^n>2C' \log n/\epsilon) &\leq \PP_{x,\bar x}(T_{\text{couple}}^n>2C \log n/\epsilon, G(2C'\log n/\epsilon))+\PP_{x,\bar x}(G^c(2C'\log n/\epsilon))\\
    &\nonumber \leq \PP_{x,\bar x}(T_{\text{couple}}^n>2C' \log n/\epsilon |  G(2C'\log n/\epsilon))  +6\epsilon\\
    &\nonumber \overset{(a)} \leq \sup_{y,\bar y \in B_{\delta'}(r)}\PP_{y,\bar y}(T_{\text{couple}}^n>C' \log n/\epsilon)  +6\epsilon\\
    &\nonumber = \sup_{y,\bar y \in B_{\delta'}(r)} \PP_{y,\bar y}(X^n(C' \log n /\epsilon) \neq \bar X^n(C' \log n/\epsilon))  +6\epsilon\\
    &\nonumber \overset{(b)} \leq 7\epsilon,
\end{align}
where $(a)$ follows by the Markov property and $(b)$ follows from the last statement of Lemma \ref{lem:log_hitting_time}. This completes the proof of the theorem.\qed

\section{Proof of Theorem~\ref{thm:criticality} (Criticality)}
\label{proof:criticality}
We divide the proof of Theorem \ref{thm:criticality} into two parts, one for each statement of the theorem.

\subsection{Proof of the first statement}
We provide the proof for the case where  $r_1$ has a multiplicity of two and $r_2$ has multiplicity of one which happens when $(\gamma_0,\gamma_1)\in \Gamma_{\text{critical}}\cap\{\gamma_1>\gamma_0\}$. The proof for $(\gamma_0,\gamma_1)\in \Gamma_{\text{critical}}\cap\{\gamma_1<\gamma_0\}$ is identical with the roles of $\gamma_1$ and $\gamma_0$ reversed. Thus, for the case under consideration, it follows from Lemma~\ref{lem:Meta Lemma} that $r_2$ is a stable root with region attraction $[r_2-\delta,1-\gamma_0]$ for all sufficiently small $\delta>0$.  Furthermore, since $r_1$ has a multiplicity of two, we can write
\begin{align}
    \Delta_k(x)=(x-r_1)^2\tilde{\Delta}_k(x)\nonumber,
\end{align}
where $\tilde{\Delta}_k$ is a polynomial with only one real root of multiplicity one at $r_2\in[\gamma_1,1-\gamma_0]$. Furthermore, by the first statement of Lemma~\ref{lem:Meta Lemma} it follows that $\Delta_k$ is non-negative in $[\gamma_1,r_2)$. Hence, $\Tilde{\Delta}_k(x)>0$ for $x\in[\gamma_1,r_2)$ . Hence, for any $\delta\in (0,r_2-r_1)$, there exists $\Delta_{\max,\delta},\Delta_{\min,\delta}>0$ such that $\Delta_{\max,\delta}>\Tilde{\Delta}_k(x)>\Delta_{\min,\delta}$ or equivalently $\Delta_{\max,\delta}(x-{r_1})^2 \geq \Delta_k(x)\geq\Delta_{\min,\delta}(x-{r_1})^2$ for all $x\in [\gamma_1,r_1+\delta]$. Furthermore, we have $|\Delta_k^n(x)-\Delta_k(x)|=O(1/n)$ for all $x\in [\gamma_1,1-\gamma_0]$.
Hence, there exists $C>0$ such that
\begin{equation}
\Delta_{\max,\delta}(x-{r_1})^2+\frac{C}{n}\geq \Delta_k^n(x)\geq \Delta_{\min,\delta}(x-r_1)^2-\frac{C}{n},
\label{eq:delta_bound}
\end{equation}
for all sufficiently large $n$ and all $x\in [\gamma_1,r_1+\delta]$.

Similarly, since $\sigma_k(x)=f_k(x)+g_k(x)>0$ for all $x\in [\gamma_1,1-\gamma_0]$, there exists $\sigma_{\min},\sigma_{\max}>0$ such that $\sigma_{\min}<\sigma_k(x)<\sigma_{\max}$ for all $x\in [\gamma_1,1-\gamma_0]$. Since $\sigma_k^n$ converges uniformly to $\sigma_k$  we have $$\sigma_{\min}<\sigma_k^n(x)<\sigma_{\max},$$ for all $x\in [\gamma_1,1-\gamma_0]$ and all sufficiently large $n$. In the following subsections, we prove the upper and lower bounds in the first statement separately.

\subsubsection{Upper Bound}

Now, we fix $\delta\in [0,r_2-r_1)$ and define $\tau_{\delta}=\inf\{t\geq 0: X^n(t)\geq r_1+\delta\}$. 
Hence, applying the generator $\mc G_{X^n}$ to $\phi\in C^3[\gamma_1,1-\gamma_0]$ we obtain that for each $x\in [\gamma_1,1-\gamma_0]$
\begin{align}
    \mc G_{X^n} \phi(x)\leq\Delta_k^n(x) \phi'(x)+\frac{\sigma_k^n(x)}{2n} \phi''(x)+\frac{\sigma_k^n(x)}{6n^2}\phi'''_{\max}\nonumber,
\end{align}
where $\phi'''_{\max}=\sup_{x\in \mc S}|\phi'''(x)|$.
Now let Let us define $Z^n(t)=n^{1/3}(X^n(t)-r_1)$ and similarly define $z=n^{1/3}(x-r_1)$. Let $\psi(z)=\phi(x)$. Then we have from~\eqref{eq:generator expansion}
\begin{align}
    \mc G_{X^n} \psi(z)&\nonumber\leq\Delta_k^n(x)n^{1/3}\psi'(z)+\frac{\sigma_k^n(x)}{2n} n^{2/3}\psi''(x)+\frac{\sigma_k^n(x)}{6n^2}n\psi'''_{\max}.\nonumber
\end{align}
The following lemma in crucial to proving the upper bound. 
\begin{lemma}
\label{lem:stein_symmetric}
For each $\kappa_1,\kappa_2,\alpha, n>0$, there exists a non-negative function $\psi\in C^3[-n^{1/3}\kappa_1,n^{1/3}\kappa_2]$ such that the following properties hold:
\begin{enumerate}
    \item The function $\psi$ solves the ODE (Stein's equation)
    \begin{equation}
        \psi''(z)+\alpha z^2 \psi'(z)=-1,
        \label{eq:stein_ode}
    \end{equation}
    with boundary conditions $\psi'(-n^{1/3}\kappa_1)=\psi(n^{1/3}\kappa_2)=0$.
     \item We have $\psi'(z)\leq 0$ for all $z\in [-n^{1/3}\kappa_1, n^{1/3}\kappa_2]$ and $\sup_{z\in [-n^{1/3}\kappa_1,n^{1/3}\kappa_2]}|\psi'(z)|<C_6$ for some positive constant $C_6$ independent of $n$.
    \item We have $\sup_{z\in [-n^{1/3}\kappa_1, n^{1/3}\kappa_2]}\psi(z)<C_7$ for some positive constant $C_7$ independent of $n$.
    \item  For all sufficiently large $n$ we have $\sup_{z\in [-n^{1/3}\kappa_1, n^{1/3}\kappa_2]}|\psi'''(z)|=n^{2/3}\alpha\kappa_1^2$.
\end{enumerate}
\end{lemma}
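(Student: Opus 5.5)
The plan is to solve the Stein equation \eqref{eq:stein_ode} explicitly with an integrating factor, and then read off all four properties from the resulting integral formula. Write $L_1=n^{1/3}\kappa_1$ and $L_2=n^{1/3}\kappa_2$. Multiplying \eqref{eq:stein_ode} by $e^{\alpha z^3/3}$ gives $(\psi'(z)e^{\alpha z^3/3})'=-e^{\alpha z^3/3}$. The condition $\psi'(-L_1)=0$ then forces
\begin{equation*}
\psi'(z)=-\int_{-L_1}^{z}\exp\Bigl(-\tfrac{\alpha}{3}(z^3-u^3)\Bigr)\,du .
\end{equation*}
The condition $\psi(L_2)=0$ forces $\psi(z)=\int_z^{L_2}(-\psi'(w))\,dw$. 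This $\psi$ is $C^3$ (indeed smooth) and solves \eqref{eq:stein_ode}, which is property 1. The formula shows immediately that $\psi'\le 0$, hence $\psi$ is non-increasing and $\psi\ge\psi(L_2)=0$, so $\psi$ is non-negative.

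For the uniform bound on $|\psi'|$ in property 2, the key algebraic fact is $z^3-u^3=(z-u)(z^2+zu+u^2)$ together with $z^2+zu+u^2\ge \tfrac34\max(z^2,u^2)$, which follows by completing the square. Hence for $u\le z$ the integrand is at most $\exp(-\tfrac{\alpha}{4}(z-u)\max(z^2,u^2))$. I will split into two cases.
\begin{itemize}
\item For $|z|\ge 1$, the integral is at most $\int_0^\infty e^{-\alpha s z^2/4}\,ds=4/(\alpha z^2)$.
\item For $|z|<1$, the part of the integral with $|u|\ge 2$ is bounded by a Gaussian-type tail. Indeed $(z-u)u^2\gtrsim |u|^3$ there, so that part is at most $\int e^{-c\alpha|u|^3}\,du$. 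The part with $|u|\le 2$ is at most $3$.
\end{itemize}
This gives $|\psi'(z)|\le \min\{C_6,\,C/z^2\}$ with constants depending only on $\alpha$, not on $n$ or $L_1$. Property 3 follows because $\psi(z)\le\int_{\mathbb R}\min\{C_6,C/w^2\}\,dw<\infty$, again independent of $n$.

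For property 4, I would differentiate \eqref{eq:stein_ode} to obtain $\psi'''=-2\alpha z\psi'-\alpha z^2\psi''$. Substituting $\psi''=-1-\alpha z^2\psi'$ gives
\begin{equation*}
\psi'''(z)=\alpha z^2-2\alpha z\psi'(z)+\alpha^2z^4\psi'(z).
\end{equation*}
At the left endpoint $\psi'(-L_1)=0$, so $\psi'''(-L_1)=\alpha L_1^2=n^{2/3}\alpha\kappa_1^2$. This shows the supremum is at least this value. It remains to show $|\psi'''(z)|\le \alpha L_1^2$ elsewhere for large $n$. Here I would refine the estimate of $\psi'$ for large $|z|$ by an integration by parts (Laplace-type expansion) of the integral formula:
\begin{equation*}
\psi'(z)=-\frac{1}{\alpha z^2}+O\bigl(z^{-5}\bigr)+(\text{boundary term from }u=-L_1).
\end{equation*}
Away from the left boundary layer, the leading terms in $\psi'''$ cancel ($\alpha z^2-\alpha z^2$), leaving $\psi'''=O(1/z)$, which is bounded. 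Inside the boundary layer near $-L_1$, the boundary term $e^{-\alpha(z^3+L_1^3)/3}$-type correction must be tracked. There I would show that $\alpha z^2(1+\alpha z^2\psi')$ stays between $0$ and $\alpha z^2\le\alpha L_1^2$, using $-1/(\alpha z^2)\le\psi'\le 0$ for $z<0$, which I would verify via a comparison/maximum-principle argument on the first-order ODE for $\psi'$. The remaining term $-2\alpha z\psi'=O(1/|z|)$ is negligible.

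This last step, pinning the supremum of $|\psi'''|$ to exactly $n^{2/3}\alpha\kappa_1^2$ rather than just $O(n^{2/3})$, is the main obstacle. If exact equality proves delicate, the bound $\sup|\psi'''|\le C n^{2/3}$ is all that the subsequent generator estimate actually uses, since it enters through the term $\sigma_k^n\psi'''_{\max}/(6n)$, which is then $O(n^{-1/3})$.
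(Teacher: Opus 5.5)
Your construction and your proofs of properties 1--3 are correct. For property 4 you follow the paper's line: the identity $\psi'''=\alpha z^2+(\alpha^2z^4-2\alpha z)\psi'$, the value $\alpha L_1^2$ at $z=-L_1$, the bound $|\psi'(z)|\le 1/(\alpha z^2)$ for $z<0$, and a sharp expansion $\psi'(z)=-1/(\alpha z^2)+O(z^{-5})$ for large positive $z$. The paper gets that expansion via the substitution $t=\beta(z^3-u^3)$ with $\beta=\alpha/3$, plus the lower bound $|\psi'(z)|\ge(1-e^{-\alpha z^3})/(\alpha z^2)$. If you integrate by parts instead, do it on $[z/2,z]$, because the weight $1/u^2$ is singular at $u=0$, and bound the rest as exponentially small.

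You genuinely differ on properties 2--3. The paper bounds $\psi(-L_1)$ by the full-plane double integral and evaluates it exactly (a Gaussian in $v$ after $s=v-u$, giving a $\Gamma(1/6)$ constant), and obtains the uniform bound on $|\psi'|$ only region by region inside the proof of property 4. Your single inequality $z^2+zu+u^2\ge\frac34\max(z^2,u^2)$ gives $|\psi'|\le\min\{C_6,4/(\alpha z^2)\}$ at once, and property 3 follows by integrating it. That route is shorter and more elementary.

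However, the step you flag as the main obstacle is not closed by what you wrote, and your fallback does not work. Treating $-2\alpha z\psi'$ as a negligible $O(1/|z|)$ error only gives $\psi'''(z)\le\alpha z^2+2/|z|$. That bound exceeds $\alpha L_1^2$ for $z$ close to $-L_1$, so it does not yield the stated equality.

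The missing observation is a sign, not an estimate. For $z<0$ the coefficient $\alpha^2z^4-2\alpha z$ is positive and $\psi'\le 0$, so $\psi'''(z)\le\alpha z^2\le\alpha L_1^2$ on all of $[-L_1,0)$ from $\psi'\le 0$ alone. No boundary-layer analysis or comparison argument is needed; this is exactly how the paper gets it. The matching lower bound $\psi'''(z)\ge -2/|z|$ uses $|\psi'(z)|\le 1/(\alpha z^2)$. That bound is immediate from the integral formula, because $z^2+zu+u^2\ge 3z^2$ when $u\le z\le 0$.

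The sharp constant is genuinely needed downstream, so your claim that $O(n^{2/3})$ suffices is false. The Taylor remainder in the generator expansion is $\sigma_k^n(x)\sup|\psi'''|/(6n)$. For $\sup|\psi'''|=Cn^{2/3}$ this equals $C\sigma_k^n(x)/(6n^{1/3})$, the same order as the term $-\sigma_k^n(x)/(2n^{1/3})$ supplied by the Stein equation. The paper's argument closes only because $C=\alpha\kappa_1^2\le 2$, giving the factor $-1+\alpha\kappa_1^2/3\le -1/3$. With an unspecified $C$ (anything $\ge 3$) the negative drift would be lost.
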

The proof of this lemma is given in Appendix \ref{sec:stein1}

Choosing $\psi$ according to the above lemma for $\kappa_1=r_1-\gamma_1,\kappa_2=\delta$
and $\alpha=\frac{2\Delta_{\min,\delta}}{\sigma_{\max}}$ we obtain
\begin{align}
    \mc G_{X^n} \psi(z)&\nonumber\leq \Delta_k^n(x)n^{1/3}\psi'(z)+\frac{\sigma_k^n(x)}{2n^{1/3}} \psi''(z)+\frac{\sigma_k^n(x)}{6n}n^{2/3}\alpha \kappa_1^2\\
    & \nonumber\leq \left(\Delta_{\min,\delta}(x-r_1)^2-\frac{C}{n}\right)n^{1/3}\psi'(z)+\frac{\sigma_k^n(x)}{2n^{1/3}} \psi''(z)+\frac{\alpha \kappa_1^2 \sigma_k^n(x)}{6n^{1/3}}\\
    & \nonumber = \frac{\Delta_{\min,\delta}}{n^{1/3}}z^2\psi'(z)+\frac{\sigma_k^n(x)}{2n^{1/3}} \psi''(z)+\frac{\alpha \kappa_1^2 \sigma_k^n(x)}{6n^{1/3}} - \frac{C}{n^{2/3}}\psi'(z)\\
    & \nonumber \leq \frac{\sigma_k^n(x)}{2n^{1/3}}\left(\psi''(z) + \frac{2\Delta_{\min,\delta}}{\sigma_{\max}} z^2\psi'(z) + \frac{\alpha \kappa_1^2}{3}\right) - \frac{C}{n^{2/3}}\psi'(z)\\
    & \nonumber \leq \frac{\sigma_k^n(x)}{2n^{1/3}}\left(-1 + \frac{\alpha \kappa_1^2}{3}\right) + \frac{C C_6}{n^{2/3}}.
\end{align}
where the second line follows from~\eqref{eq:delta_bound} since $\psi'\leq 0$,
and in the last line we have used~\eqref{eq:stein_ode}. 
Then, for all $x \in [\gamma_1, r_1+\delta]$, it holds that
\begin{align}
    \alpha\kappa_1^2&=\frac{2\Delta_{\min,\delta}}{\sigma_{\max}}(\gamma_1-r_1)^2\leq \frac{2\tilde\Delta_k(x)}{\sigma_k(x)}(\gamma_1-r_1)^2=\frac{2\Delta_k(x)}{\sigma_k(x)}\frac{(\gamma_1-r_1)^2}{(x-r_1)^2}\leq 2 \frac{(\gamma_1-r_1)^2}{(x-r_1)^2},\nonumber
\end{align}
and in particular for $x=\gamma_1$ the above inequality reduces to $\alpha \kappa_1^2\leq 2$. Therefore, we can write
\begin{align*}
    \mc G_{X^n} \psi(z) &\leq \frac{\sigma_k^n(x)}{2n^{1/3}}\left(-1 + \frac{2}{3}\right) + \frac{C C_6}{n^{2/3}}\\
    & = -\frac{\sigma_k^n(x)}{6n^{1/3}} + \frac{C C_6}{n^{2/3}}\\
    & \leq -\frac{\sigma_{\min}}{6n^{1/3}} \left(1 -\frac{C C_6}{n^{1/3}}\right)\\
    & < 0,
\end{align*}
for sufficiently large $n$. We are now in a position where we can apply Dynkin's formula. Let $\tau_{\delta}=\inf\{t\geq 0: X^n(t)\geq r_1+\delta\}$. Then applying Dynkin's formula, we can write for any $j>0$
\begin{align}
    0\leq \EE_{\gamma_1}[\psi(Z^n(\tau_\delta\wedge j))]&=\EE_{\gamma_1}[\psi(Z^n(0))]+\EE_{\gamma_1}\sbrac{\int_{0}^{\tau_{\delta}\wedge j}\mc{G}^n\psi(Z^n(s))ds}\nonumber\\
    &\leq \EE_{\gamma_1}[\psi(Z^n(0))]-\frac{\sigma_{\min}}{6n^{1/3}}\left[1 - \frac{6 C C_6}{\sigma_{\min} n^{1/3}}\right]\EE_{\gamma_1}[\tau_{\delta}\wedge j].\nonumber
\end{align}
Now using the fact that $\psi(z)<C'$ for all $z$ for some constant $C'>0$ independent of $n$ we obtain for all sufficiently large $n$ that
\begin{align}
    \EE_{\gamma_1}[\tau_{\delta}\wedge j]\leq \left(\frac{6C'}{\sigma_{\min}-\frac{6CC_6}{n^{1/3}}}\right)n^{1/3}. \nonumber
\end{align}
Since the above holds for any $j>0$, using the dominated convergence theorem we have
\begin{equation}
    \EE_{\gamma_1}[\tau_\delta]=\lim_{j \to\infty}\EE_{\gamma_1}[\tau_{\delta}\wedge j]\leq \left(\frac{6C'}{\sigma_{\min}-\frac{6CC_6}{n^{1/3}}}\right)n^{1/3}.\nonumber
\end{equation}
This shows that $\EE_{\gamma_1}[\tau_{\delta}]=O(n^{1/3})$
which implies that $\EE_{x}[T^n_{\text{enter}}(B_\delta(r))]=O(n^{1/3})$ for any $x<r_2$.
Since $r_2$ is a stable root with region attraction $[r_2-\mu,1-\gamma_0]$ for all sufficiently small $\mu>0$ we have from the third statement of  Lemma~\ref{lem:log_hitting_time} that $\EE_x[T^n_{\text{enter}}(B_\mu(r_2))]=O(\log n)$ for all $x > r_2$. Hence, we have shown $\EE_{x}[T^n_{\text{enter}}(B_\delta(r))]=O(n^{1/3})$ for all $x\in \mc S$.

We now proceed to establish the upper bound on the mixing time. To do this, we employ the same technique as in Theorem \ref{thm:fast_mixing}. We define two independent chains $X^n$ and $\bar X^n$, each evolving according to the rates~\eqref{eq:uprate}-\eqref{eq:downrate} and initialised at $x$ and $\bar x$ respectively. Define $T_{\text{couple}}^n=\inf\{t\geq 0: X^n(t)=\bar X^n(t)\}$. To prove the mixing time upper bound of $O(n^{1/3})$ it suffices to show that 
for every $\epsilon>0$ there exists a constant $C\equiv C(\epsilon)>0$ such that $\PP_{x,\bar x}(T_{\text{couple}}^n>C n^{1/3})\leq\epsilon$ for all starting states $x, \bar x \in \mc S^n$.
We first let $\eta>0$ be such that the last statement of Lemma~\ref{lem:log_hitting_time} holds for any $x,\bar x\in B_{\eta}(r_2)$ and then choose $\delta'\in (0,\eta)$.
Now, similar to the proof of Theorem~\ref{thm:fast_mixing}, we define the event $G(t):=\{X^n(t/2)\in B_{\delta'}(r_2), \bar X^n(t/2) \in B_{\delta'}(r_2)\}$.
Now, similar to the proof of  Theorem~\ref{thm:fast_mixing}, it can be shown that
$\PP_{x,\bar x}(G^c(t))\leq 6\epsilon$ for $t=2Cn^{1/3}/\epsilon$ from which it follows that $\PP_{x,\bar x}(T^n_{couple}>2Cn^{1/3}/\epsilon)\leq 7\epsilon$. This completes the proof of the upper bound.

\subsubsection{Lower Bound}

Choose $\phi(x)=(x-r_1+\epsilon_n)^2$, with $\epsilon_n = n^{-1/3}$. Then for $x \in[r_1-\epsilon_n, r_1]$ and all sufficiently large $n$, we have
\begin{align*}
    \mc G_{X^n} \phi(x) &= \Delta_k^n(x)2(x-r_1+\epsilon_n) + \frac{\sigma_k^n(x)}{n}\\
    & \leq 2\Delta_{\max, \delta}(x-r_1)^2(x-r_1+\epsilon_n) + \frac{\sigma_{\max}}{n}+\frac{2C(x-r_1+\epsilon_n)}{n}\\
    & \leq 2 \Delta_{\max, \delta}\epsilon_n (x-r_1)^2 + \frac{\sigma_{\max}}{n}+\frac{2C\epsilon_n}{n}\\
    & \leq \frac{2\Delta_{\max, \delta} + \sigma_{\max}}{n}\left[1 +\frac{\bar C}{n^{1/3}}\right]\\
    & \leq \frac{C'}{n},
\end{align*}
for sufficiently large $n$. Then, for $x < r_1-\epsilon_n$, we have
\begin{align*}
    \mc G_{X^n} \phi(x) = 2\Delta_k^n(x)(x-r_1+\epsilon_n)+\frac{\sigma_k^n(x)}{n} \leq \frac{\sigma_{\max}}{n}.
\end{align*}
Combining both of the bounds above, we have
\begin{align}
    \mc G_{X^n} \phi(x) < \frac{D}{n}, \quad\forall x \leq r_1,\nonumber
\end{align}
where $D>0$ and is independent of $n$ and $k$. Now define $\tau' = \inf \{ t \geq 0 : |X^n(t)-r_1+\epsilon_n| \geq \epsilon_n \}$ and $\tau'_{r_1} = \inf \{ t \geq 0 : X^n(t) \geq r_1\}$. Clearly, for any $x<r_1$ we have $\EE_x[\tau'_{r_1}]\geq \EE_x[\tau']$. Furthermore, starting from $x=r_1-\epsilon_n$ and applying Dynkin's formula gives
\begin{align*}
    \EE_{r_1-\epsilon_n}[\phi(X^n(\tau' \wedge\delta' n^{1/3}))] &= \phi(X^n(0)) + \EE_{r_1-\epsilon_n}\left[ \int_0^{\tau' \wedge \delta' n^{1/3}} \mc G_{X^n} \phi(X^n(s))ds\right]\\
    & \leq \frac{1}{n^2} +\frac{D}{n}\EE_{r_1-\epsilon_n}[\tau' \wedge \delta' n^{1/3}]\\
    & \leq \frac{1}{n^2} + \frac{D\delta'}{n^{2/3}}.
\end{align*}
We also have
\begin{align}
     \EE_{r_1-\epsilon_n}[\phi(X^n(\tau'\wedge \delta' n^{1/3}))]\geq \EE_{r_1-\epsilon_n}\sbrac{\phi(X^n(\tau'))\indic{\tau'\leq \delta' n^{1/3}}}\geq \epsilon_n^2 \PP_{r_1-\epsilon_n}\brac{\tau'\leq \delta' n^{1/3}}.\nonumber
\end{align}
Now let $\tau'_{r_2-\mu} = \inf \{ t\geq 0 : X^n \geq r_2-\mu\}$, with $0 < \mu < \max \{ r_2-r_1, 1-\gamma_0-r_2\}$. Since $\{ \tau'_{r_2-\mu} \leq \delta n^{1/3}\} \subseteq\{\tau' \leq \delta n^{1/3}\}$, given that $X^n(0) = r_1-\epsilon_n$, then we have that
\begin{align}
\PP_{r_1-\epsilon_n}\brac{\tau'_{r_2-\mu} \leq \delta' n^{1/3}} \leq \PP_{r_1-\epsilon_n}\brac{\tau ' \leq \delta' n^{1/3}} \leq D\delta' + n^{-4/3}.\nonumber
\end{align}
Since $d^n(t)=\max_{x\in [0,1]}\sup_{A\subseteq[\gamma_1,1-\gamma_0]}\abs{\PP_x(X^n(t)\in A)-\pi^n(A)}$.
Hence, choosing $x=(r_1-\epsilon_n)$
and $A=A_{\mu}$ where $A_\mu = [r_2-\mu,r_2+\mu]$, we obtain $$d(t)\geq \abs{\PP_{r_1-\epsilon_n}(X^n(t)\in [r_2-\mu,r_2+\mu])-\pi^n([r_2-\mu,r_2+\mu])}.$$ But, 
since $\pi^n\Rightarrow \delta_{r_2}$ as $n\to \infty$, we have for all sufficiently large $n$
that $\pi^n([r_2-\mu,r_2+\mu])\geq \frac{1}{2}$. Also define $\tau'_{r_2-\mu} = \inf \{ t \geq 0 : X^n(t)\geq r_2-\mu\}$.
Hence,
\begin{align*}
    d(t)&\geq \frac{1}{2}-\PP_{r_1-\epsilon_n}(X^n(t)\in [r_2-\mu,r_2+\mu])\\
    &\geq \frac{1}{2}-\PP_{r_1-\epsilon_n}(\tau'_{r_2-\mu}\leq t).
\end{align*}
Choosing $t=\delta' n^{1/3}$ with $\delta'=1/8D$ in the above we obtain for all sufficiently large $n$
\begin{align*}
    d(\delta' n^{1/3})&\geq \frac{1}{2}-\PP_{r_1-\epsilon_n}(\tau'_{r_2-\mu}\leq \delta' n^{1/3})\geq \frac{1}{2}-D\delta'-n^{-4/3}=\frac{1}{2}-\frac{1}{8} -n^{-4/3} \geq \frac{1}{4}
\end{align*}
This implies that $t_{\text{mix}}^n\geq \frac{n^{1/3}}{8D}=\Omega(n^{1/3})$. 

Therefore, we have shown that $t_{mix} = \Omega(n^{1/3})$ and $t_{mix} = O(n^{1/3})$. This completes the proof of the first statement of Theorem~\ref{thm:criticality}.

\subsection{Proof of the second statement}
Now we turn to the proof of the second statement.
For $(\gamma_0, \gamma_1) \in \Gamma_{\text{crit}}^k$ with $\gamma_0 = \gamma_1=\gamma$ the $\Delta_k$ has a single root of multiplicity three at $r=1/2$, and hence
\begin{align}
    \Delta_k(x)=(x-r)^3\tilde{\Delta}_k(x),\nonumber
\end{align}
where $\tilde{\Delta}_k(x)<0$ for all $x\in [\gamma_1,1-\gamma_0]$. Hence, there exist $\tilde \Delta_{\min},\tilde \Delta_{\max}>0$ such that $-\tilde\Delta_{\max}<\tilde{\Delta}_k(x)<-\tilde \Delta_{\min}<0$. This implies that for all sufficiently large $n$, there exists a $C>0$ such that we have $-\tilde \Delta_{\max}(x-r)^3 + C/n>{\Delta_k^n}(x)>-\tilde \Delta_{\min}(x-r)^3 - C/n\geq0$ for $x\leq r$. As before, the generator acting on a test function $\phi \in C^3(\mc S)$ can be written as
\begin{align}
     \mc G_{X^n} \phi(x)\leq\Delta_k^n(x) \phi'(x)+\frac{\sigma_k^n(x)}{2n} \phi''(x)+\frac{\sigma_k^n(x)}{6n^2}\phi'''_{\max}\nonumber,
\end{align}
where where $\phi'''_{\max}=\sup_{x\in \mc S}|\phi'''(x)|$.

\subsubsection{Upper Bound}
We make the following transinformation $Z^n(t) = n^{1/4}(X^n(t) - r)$ and define $\phi(x)=\psi(z)$ with $z=n^{1/4}(x-r)$. Thus, the generator expansion in terms of $\psi$, can be written as
\begin{align}
    \nonumber  \mc G_{X^n} \psi(z)
    & \leq \Delta_k^n(x)  n^{1/4}\psi'(z)+\frac{\sigma_k^n(x)}{2n^{1/2}} \psi''(z) +\frac{\sigma_k^n(x)}{6n^{5/4}} \max_w|\psi'''(w)|.
\end{align}
Now, we choose $\psi$ according to the following lemma.

\begin{lemma}
\label{lem:stein2}
For each $\kappa,\alpha, n>0$, there exists a non-negative function $\psi\in C^3[-n^{1/4}\kappa,0]$ such that the following properties hold:
\begin{enumerate}
    \item The function $\psi$ solves the ODE (Stein's equation)
    \begin{equation}
        \psi''(z)-\alpha z^3 \psi'(z)=-1,
        \label{eq:stein_ode2}
    \end{equation}
    with boundary conditions $\psi'(-n^{1/4}\kappa)=\psi(0)=0$.
     \item We have $\psi'(z)\leq 0$ for all $z\in [-n^{1/4}\kappa, 0]$ and $\sup_{z\in [-n^{1/4}\kappa, 0]}|\psi'(z)|<C_1$ for some positive constant $C_1$ independent of $n$.
    \item We have $\sup_{z\in [-n^{1/4}\kappa, 0]}\psi(z)<C_2$ for some positive constant $C_2$ independent of $n$.
    \item  For all sufficiently large $n$ we have $\sup_{z\in [-n^{1/4}\kappa,0]}|\psi'''(z)|=\alpha\kappa^3n^{3/4}$.
\end{enumerate}
\end{lemma}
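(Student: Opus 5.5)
The plan is to solve the Stein equation~\eqref{eq:stein_ode2} in closed form by reducing it to a first-order linear ODE for $g:=\psi'$, and then to read off all four properties from the explicit formula. Throughout I write $L:=n^{1/4}\kappa$.

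\textbf{Step 1: explicit solution and property 1.} Setting $g=\psi'$, equation~\eqref{eq:stein_ode2} becomes
\[
g'-\alpha z^3 g=-1, \qquad g(-L)=0 .
\]
The integrating factor is $e^{-\alpha z^4/4}$, and it gives
\[
g(z)=-\int_{-L}^{z}e^{\alpha(z^4-u^4)/4}\,du .
\]
I then define $\psi(z)=\int_z^0|g(u)|\,du$. This $\psi$ is $C^\infty$, satisfies $\psi(0)=0$ and $\psi'=g$, and hence solves~\eqref{eq:stein_ode2} with both boundary conditions. Since $g\le 0$, we get $\psi\ge 0$ and $\psi'\le 0$, which is the sign part of property 2.

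\textbf{Step 2: a uniform bound on $|g|$ (properties 2 and 3).} For $-L\le u\le z\le 0$ we have $|u|\ge|z|$. Convexity of $t\mapsto t^4$ then gives
\[
u^4\ge z^4+4|z|^3(|u|-|z|).
\]
Substituting this into the formula for $g$ yields
\[
|g(z)|\le\int_0^{L-|z|}e^{-\alpha|z|^3 s}\,ds=\frac{1-e^{-\alpha|z|^3(L-|z|)}}{\alpha|z|^3}<\frac{1}{\alpha|z|^3}. \tag{$\ast$}
\]
For $|z|\le 1$ I will instead use $z^4-u^4\le 1-u^4$. This gives $|g(z)|\le e^{\alpha/4}\int_0^\infty e^{-\alpha u^4/4}\,du$, which is a constant.

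Combining the two regimes, $|g|\le C_1$ with $C_1$ independent of $n$, which completes property 2. For property 3,
\[
\psi(z)\le\int_{-\infty}^0\min\bigl(C_1,\,1/(\alpha|u|^3)\bigr)\,du<\infty ,
\]
so $\psi\le C_2$ with $C_2$ independent of $n$.

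\textbf{Step 3: the exact value of $\sup|\psi'''|$ (property 4).} This is the delicate step, because the statement is an equality rather than an order bound. Differentiating the ODE gives
\[
\psi'''=g''=3\alpha z^2 g+\alpha z^3 g' .
\]
At $z=-L$ we have $g(-L)=0$ and $g'(-L)=-1$, so $g''(-L)=\alpha L^3=\alpha\kappa^3 n^{3/4}$. The supremum is therefore at least this value, and it remains to show it is at most this value.

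\begin{itemize}
\item \emph{Sign of $g'$.} We have $g'=\alpha z^3 g-1=\alpha|z|^3|g|-1$. By $(\ast)$ this lies in $(-1,0]$, so $|g'|<1$ away from the endpoint.
\item \emph{Opposite signs of the two terms.} On $[-L,0]$ the term $3\alpha z^2 g$ is $\le 0$. The term $\alpha z^3 g'$ is $\ge 0$, since $z\le0$ and $g'\le 0$.
\item \emph{Bound by the larger term.} Because the two terms have opposite signs,
\[
|g''|\le\max\bigl(3\alpha z^2|g|,\ \alpha|z|^3|g'|\bigr).
\]
The second entry is at most $\alpha L^3$. For the first entry I use $(\ast)$ when $|z|\ge1$, giving $3\alpha z^2|g|\le 3/|z|\le 3$, and the bound $|g|\le C_1$ when $|z|<1$, giving $3\alpha z^2|g|\le 3\alpha C_1$. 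So the first entry is bounded by a constant independent of $n$.
\end{itemize}

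For all sufficiently large $n$ this constant is below $\alpha L^3$. Hence $\sup_{[-L,0]}|\psi'''|=\alpha L^3=\alpha\kappa^3 n^{3/4}$, attained at $z=-L$, which is exactly property 4.

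The main obstacle is this last step. A naive triangle-inequality bound would only give $O(\alpha L^3)$, not the exact identity. The two ingredients that upgrade it to equality are the sharp bound $\alpha|z|^3|g|<1$ from the convexity estimate $(\ast)$ and the opposite-sign observation for the two terms of $g''$.
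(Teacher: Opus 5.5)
Your proof is correct and follows essentially the paper's route: the same explicit solution $\psi'(z)=-\int_{-L}^{z}e^{\alpha(z^4-u^4)/4}\,du$, the same key estimate $\alpha|z|^3|\psi'(z)|\le 1$ away from $0$ with a constant bound near $0$, and your opposite-sign split of $\psi'''=3\alpha z^2 g+\alpha z^3 g'$ is a repackaging of the paper's bounds $3/z\le\psi'''\le-\alpha z^3$. Your Property 3 argument, which integrates $\min(C_1,1/(\alpha|u|^3))$, is a slightly shorter alternative to the paper's Fubini computation, and your evaluation $g''(-L)=\alpha L^3$ makes explicit the attainment of the supremum that the paper's write-up leaves implicit.

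One small slip: $g'=\alpha|z|^3|g|-1$ lies in $[-1,0)$, not $(-1,0]$, since it equals $-1$ at both $z=-L$ and $z=0$; this is harmless because only $-1\le g'\le 0$ is used.
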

The proof of the above lemma is given in Appendix \ref{sec:stein2}

Choosing $\psi$ according to the above lemma for $\kappa=-\gamma$
and $\alpha=\frac{2\Delta_{\min}}{\sigma_{\max}}$ we obtain
\begin{align}
    \nonumber \mc G_{X^n} \psi(z) 
    & \nonumber \leq - \tilde \Delta_{\min}(x-r)^3n^{1/4}\psi'(z) + \frac{\sigma_k^n(x)}{2n^{1/2}} \psi''(z) +\frac{\sigma_k^n(x)}{6n^{5/4}} \max_{w\in[-\kappa n^{1/4},0]}|\psi'''(w)| + \frac{C}{n^{3/4}}|\psi'(z)|\\
    & \leq - \tilde \Delta_{\min}\frac{z^3}{n^{1/2}}\psi'(z) + \frac{\sigma_k^n(x)}{2n^{1/2}} \psi''(z)  +  \frac{\sigma_k^n(x)}{6n^{5/4}}\alpha\kappa^3n^{3/4}+ \frac{CC_1}{n^{3/4}}\nonumber\\
    & = \frac{\sigma_{k}^{n}(x)}{2n^{1/2}}\left[\psi''(z)-\frac{2 \tilde \Delta_{\min}}{\sigma_{\max}}z^3 \psi'(z)+\frac{\alpha\kappa^3}{3}\right] + \frac{CC_1}{n^{3/4}}\nonumber\\
    & = \frac{\sigma_k^n(x)}{2n^{1/2}}\left( \frac{\alpha \kappa^3}{3}-1\right)+ \frac{CC_1}{n^{3/4}}\nonumber. 
\end{align}

Now consider
\begin{align}
    \alpha \kappa^3 = \frac{2 \tilde \Delta_{\min}}{\sigma_{\max}}\cdot (r-\gamma)^3  < \frac{2 \Delta_k(\gamma)}{\sigma_{\max}}\cdot \frac{(r-\gamma)^3}{(r-\gamma)^3} < \frac{2 \sigma_k(\gamma)}{\sigma_{\max}}\leq 2.\nonumber
\end{align}
Hence,
\begin{align*}
    \mc G_{X^n} \psi(z) &\leq \frac{\sigma_k^n(x)}{2n^{1/2}}\left( \frac{2}{3}-1\right)+ \frac{CC_1}{n^{3/4}}\\
    & \leq - \frac{\sigma_{\min}}{6n^{1/2}}+ \frac{CC_1}{n^{3/4}}\\
    & = -\frac{\sigma_{\min}}{6n^{1/2}}\left[ 1 - \frac{6CC_1}{\sigma_{\min}n^{1/4}}\right].
\end{align*}
Now we are in a position to apply Dynkin's formula to bound the time $T_{\text{enter}}^n(r)=\inf\{t\geq 0: (r-X^n(0))(r-X^n(t))\leq 0\}$, i.e. the first time the root $r$ is reached or crossed. Following similar line of arguments as in the first part we obtain
\begin{align}
    \EE_{\gamma}[T_{\text{enter}}^n(r)]  \leq  \left(\frac{6C_2}{\sigma_{\min}-\frac{6CC_1}{n^{1/4}}}\right)n^{1/2}=C'n^{1/2}\nonumber.
\end{align}
for some constant $C'>0$. Using symmetry we can conclude that $\EE_{1-\gamma}[T_{\text{enter}}^n(r)] \leq C'n^{1/2}$ for all sufficiently large $n$. This implies that $\max_{x\in S^n}\EE_{x}[T^n_{\text{enter}}(r)]\leq C'n^{1/2}$ for all sufficiently large $n$

Now to prove the mixing time upper bound using the hitting time upper bound, we use the following lemma.

\begin{lemma}
    Suppose that $N_0^n=N_1^n=\ceil{n\gamma}$ for some $\gamma\in(0,1/2)$. Let $X^n$ and $\bar X^n$ be two chains, each following the rates in~\eqref{eq:uprate}-\eqref{eq:downrate} and started at $x\in \mc S^n$ and $\bar x\in \mc S^n$, respectively. Assume without loss generality that $|x-1/2|\geq |\bar x -1/2|$. Then we have
    \begin{align}
        T_{\text{couple}}^{n}(X^n,\bar X^n) \leq_{st} T_{\text{enter}}^n(r)\nonumber,
    \end{align}
    where $T^n_{\text{couple}}(X^n,\bar X^n)=\inf\{t\geq 0: X^n(t)=\bar X^n(t)\}$ is the first time the two chains meet and
    $T_{\text{enter}}^n(r)=\inf\{t\geq 0: (r-x)(r-X^n(t))\leq 0\}$, as defined before, is the first time the chain $X^n$ crosses $r$.
\end{lemma}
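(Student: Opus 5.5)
The plan is to build an explicit coupling of $X^n$ and $\bar X^n$ that exploits the symmetry of the dynamics when $N_0^n=N_1^n$. Concretely, the map $y\mapsto 1-y$ sends $\mc S^n$ to itself, and $f_k^n(1-y)=g_k^n(y)$, so $1-X^n$ is again a chain with rates~\eqref{eq:uprate}--\eqref{eq:downrate}. Without loss of generality take $x\le \bar x$. Since $|x-1/2|\ge|\bar x-1/2|$, we have $x\le 1/2$ and $\bar x\in[x,1-x]$. The coupling will keep $\bar X^n$ trapped between $X^n$ and its mirror image $1-X^n$ until the chains meet. Once $X^n$ reaches or crosses $r=1/2$, the interval $[X^n,1-X^n]$ has collapsed (or flipped orientation), and this forces a meeting.

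First, I would run $X^n$ with its own law. I would then define $\bar X^n$ via a coupling that is monotone with respect to both $X^n$ and $\tilde X^n:=1-X^n$. For one-dimensional nearest-neighbour birth--death chains a standard device achieves this: let the chains jump independently, so that no two jumps occur simultaneously almost surely. Then two chains whose values differ can only swap order by first coinciding, since every jump has size $1/n$ and the state space is the lattice $\mc S^n$.

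The key observation is as follows. Suppose $\bar X^n$ runs independently of $X^n$ until the first time $\bar X^n(t)\in\{X^n(t),1-X^n(t)\}$. Before that time, continuity of nearest-neighbour paths keeps $X^n(t)<\bar X^n(t)<1-X^n(t)$. If $\bar X^n$ hits $X^n$, the chains have coupled. If $\bar X^n$ hits $1-X^n$ first, I would switch the coupling: from then on I would set $\bar X^n\equiv 1-X^n$. This is legitimate because $1-X^n$ has the same transition rates, so the strong Markov property makes the glued process a valid copy of the chain. Now $\bar X^n=1-X^n$ and $X^n$ meet exactly when $X^n=1/2$, or more precisely when they cross. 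The case of $n$ odd, where $1/2\notin\mc S^n$, needs care, since two lattice chains with odd separation cannot meet under independent single jumps. To handle it, I would interpret meeting as becoming adjacent and then apply one extra synchronous step, or I would simply use the crossing definition of $T^n_{\text{enter}}(r)$ together with the fact that $X^n$ and $1-X^n$ swap order at that moment.

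In all cases, then, $T^n_{\text{couple}}\le T^n_{\text{enter}}(r)$ pathwise under this coupling. Here $T^n_{\text{enter}}(r)$ is computed for the marginal chain $X^n$ started at $x$, whose law is the original one. This pathwise bound yields the stochastic domination claimed in the lemma. The main obstacle is making the gluing rigorous. One must check that the process $\bar X^n$, defined piecewise (independent until a stopping time, then equal to a reflected copy or synchronous with $X^n$), is a Markov chain with the correct rates. This follows from the strong Markov property at the stopping time and from the fact that both continuations have identical generators. The parity issue at $1/2$ must also be handled as described above.
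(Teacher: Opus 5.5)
Your construction matches the paper's: sandwich $\bar X^n$ between $X^n$ and a mirror chain started at $1-x$, run it independently until it hits one of them, then glue. Your justification of the gluing is fine, since each state-dependent rule has the correct marginal rates because $f_k^n(1-y)=g_k^n(y)$. For even $n$ the argument is complete: the mirror pair can only meet at $1/2$, and $X^n$ must visit $1/2$ at $T^n_{\text{enter}}(r)$.

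The gap is the odd-$n$ case. You flag it but do not resolve it, and the claimed pathwise bound $T^n_{\text{couple}}\le T^n_{\text{enter}}(r)$ fails for your coupling there. After gluing, $\bar X^n=1-X^n$ jumps exactly when $X^n$ does, in the opposite direction, so $\bar X^n-X^n$ stays an odd multiple of $1/n$. At $(X^n,\bar X^n)=(r_L,r_U)$, with $r_L=1/2-1/(2n)$ and $r_U=1/2+1/(2n)$, the reflected move is a swap to $(r_U,r_L)$. So on the event that $\bar X^n$ first hits $1-X^n$, the chains never coincide and $T^n_{\text{couple}}=\infty$. Your diagnosis is also slightly off: independent single jumps change the parity of the separation and cause no trouble. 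The obstruction is the simultaneous opposite jumps of the mirror phase.

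Neither of your fixes works:
\begin{itemize}
\item Redefining ``meeting'' as adjacency changes $T^n_{\text{couple}}$ and breaks the coupling inequality $d^n(t)\le\PP(X^n(t)\neq\bar X^n(t))$, which is what the lemma is used for.
\item A synchronous step (both chains making the same move) keeps them at distance $1/n$.
\item The two chains swapping order is not a meeting.
\end{itemize}

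The missing ingredient, which the paper supplies, is a local change of the joint rates at the single state $(r_L,r_U)$. Replace the two reflected transitions by three independent clocks:
\begin{itemize}
\item both chains move outward at rate $ng_k^n(r_L)=nf_k^n(r_U)$;
\item only the lower chain jumps up to $r_U$, at rate $nf_k^n(r_L)$;
\item only the upper chain jumps down to $r_L$, at rate $ng_k^n(r_U)$.
\end{itemize}
Using $g_k^n(r_L)=f_k^n(r_U)$, each marginal keeps exactly its correct up and down rates. Each inward move produces a meeting. If the lower chain $X^n$ jumps to $r_U$, the meeting occurs exactly when $X^n$ first crosses $1/2$. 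If the upper chain jumps to $r_L$, the meeting occurs strictly before that crossing. With this modification inserted into your glued phase, the inequality $T^n_{\text{couple}}\le T^n_{\text{enter}}(r)$ holds pathwise for all $n$.
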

\begin{proof}
    If the two chains $X^n$, $\bar X^n$ are such that $x=1-\bar x$, then we construct a reflection-symmetric coupling, i.e., whenever the lower chain transitions up one state, the upper chain transitions down one state and vice versa. This is possible since we have $N_0^n=N_1^n$ and therefore $f_k^n(x)=g_k^n(1-x)$ for all $x\in \mc S^n$. When $n$ is even, $r\in \mc S^n$ and both chains meet at $r$ and therefore $T_{\text{couple}}^{n}(X^n,\bar X^n)=T_{\text{enter}}^n(r)$. When $n$ is odd, let the two adjacent states to $r$ be $r_L=1/2-1/2n, r_U=1/2+1/2n \in \mc S^n$. It is obvious that if we continue to use the same reflection-symmetric coupling, then the transition of $(r_L, r_U)\to(r_U, r_L)$ means that the chains do not meet. We modify the coupling when the reflection symmetric chain reaches the state $(r_L, r_U)$. Introduce three independent exponential clocks corresponding to the following transitions. First, at rate $g_k^n(r_L)=f_k^n(r_U)$, both chains move away from $r$. At rate $f_k^n(r_L)$ the lower chain jumps from $r_L\to r_U$, i.e. $(r_L, r_U) \to (r_U, r_U)$ and the chains couple at $r_U$. Finally, at rate $g_{k}^n(r_U)$, the upper chains moves from $r_U \to r_L$, i.e. $(r_L,r_U) \to (r_L, r_L)$ and the chains meet at $r_L$. The first exponential clock to ring determines which transition occurs. After the chains couple, they evolve together. Hence, we again have $T_{\text{couple}}^{n}(X^n,\bar X^n)=T_{\text{enter}}^n(r)$.

    If $x\neq 1-\bar x$ is not true, then define the chain $Y^n$ such that $Y^n(0) = 1 - X^n(0)$. Now couple $X^n$ and $Y^n$ with the reflection-symmetric coupling described above. By the choice of $X^n$, the initial state of $\bar X^n$ lies between those of $X^n$ and $Y^n$. Let $\bar X^n$ evolve independently of both $X^n$ and $Y^n$ until it first meets one of them after which the chains that meet move together. Since the chains are one dimensional birth-death processes, $\bar X^n$ cannot leave the interval bounded by $X^n$ and $Y^n$ without first meeting one of them.  Thus, under this constructed coupling, we must have $T^n_{\text{couple}}(X^n,\bar X^n)\leq T^n_{\text{couple}}(X^n,Y^n)=T^n_{\text{enter}}(r)$.
\end{proof}
Hence, using the above lemma and the coupling lemma for Markov chain mixing we obtain
\begin{align*}
    d^n(t) &\leq \max_{x, \bar x} \PP_{x, \bar x}(T^n_{\text{couple}}(X^n,\bar X^n)>t)\\
    &\leq \max_{x} \PP_{x}(T_{\text{enter}}^n(r) > t)\\
    & \leq \max_{x}\frac{\EE_{x}[T_{\text{enter}}^n(r)]}{t}\\
    & \leq \frac{C'n^{1/2}}{t}.
\end{align*}
Now choosing $t = 4C'n^{1/2}$, we have that $d^n(4C'n^{1/2}) \leq 1/4$. Hence, $t_{mix}^n = O(n^{1/2})$. This concludes the proof of the upper bound. 

\subsubsection{Lower bound}

Choose $\phi(x)=(x-r+\epsilon_n)^2$ with $\epsilon_n=n^{-1/4}$. Then for $x\in[r-\epsilon_n,r]$ and all sufficiently large $n$ we have
\begin{align*}
    \mc G_{X^n}\phi(x)&=\Delta_k^n(x) 2(x-r+\epsilon_n)+\frac{\sigma_k^n(x)}{n}\\
    &\overset{(a)}{\leq} -2d(x-r)^3(x-r+\epsilon_n)+\frac{\sigma_{\max}}{n} + \frac{2C(x-r+\epsilon_n)}{n}\\
    &\leq 2d\epsilon_n(r-x)^3+\frac{\sigma_{\max}}{n} + \frac{2C\epsilon_n}{n}\\
    &\overset{(b)}{\leq}\frac{2d+\sigma_{\max}}{n}+\frac{2C}{n^{5/4}}\\
    & = \frac{2d+\sigma_{\max}}{n} \left( 1 + \frac{\bar C}{n^{1/4}} \right)\\
    & \leq \frac{C'}{n},
\end{align*}
for a sufficiently large $n$, where (a) follows since $\Delta_k^n(x)\leq -d(x-r)^3 +C/n, x\geq r-\epsilon_n$, and $\sigma_k^n(x)<\sigma_{\max}$ and (b) follows because $\abs{x-r}\leq n^{-1/4}$.
Similarly, for $x\leq r-\epsilon_n$ we have 
\begin{equation}
    \mc G_{X^n}\phi(x)=2\Delta_k^n(x) (x-r+\epsilon_n)+\frac{\sigma_k^n(x)}{n}\leq \frac{\sigma_{\max}}{n},\nonumber
\end{equation}
where the inequality follows since $\Delta_k^n(x)>0$ and $x-r+\epsilon_n \leq 0$ in the range considered.
Thus, combining the above bounds we have
\begin{equation}
    \mc G_{X^n} \phi(x)\leq \frac{C}{n}, \quad \forall x \leq r,\nonumber
\end{equation}
where $C$ is some positive constant.
Now,  we define $\tau=\inf\{t\geq0: \abs{x-r+\epsilon_n} \geq 1/n^{1/4}\}$ and $\tau_r=\inf\{t\geq 0:x\geq r\}$. Clearly, for any $x< r$ we have $\EE_{x}[\tau_r]\geq \EE_{x}[\tau]$.
Furthermore, starting from $x=r-n^{-1/4}$ and applying Dynkin's formula we obtain the following:
\begin{align*}
    \EE_{r-\epsilon_n}[\phi(X^n(\tau\wedge \delta n^{1/2}))]&=\EE_{r-\epsilon_n}[\phi(X^n(0))]+\EE_{r-\epsilon_n}\sbrac{\int_{0}^{\tau\wedge \delta n^{1/2}} \mc{G}^n \phi(X^n(s))ds}\\
    &\leq\frac{1}{n^2}+\frac{C}{n}\EE_{r-\epsilon_n}[\tau\wedge \delta n^{1/2}]\\
    &\leq \frac{1}{n^2}+\frac{C}{\sqrt n}\delta.
\end{align*}
But we also have
\begin{align*}
     \EE_{r-\epsilon_n}[\phi(X^n(\tau\wedge \delta n^{1/2}))]\geq \EE_{r-\epsilon_n}\sbrac{\phi(X^n(\tau))\indic{\tau\leq \delta n^{1/2}}}\geq \epsilon_n^2 \PP_{r-\epsilon_n}\brac{\tau\leq \delta n^{1/2}}.
\end{align*}
Hence, we have
\begin{align}
\PP_{r-\epsilon_n}\brac{\tau\leq \delta n^{1/2}}\leq C\delta+n^{-3/2} . \nonumber
\end{align}
Now, we recall that $d(t)=\max_{x\in [0,1]}\sup_{A\subseteq[\gamma_1,1-\gamma_0]}\abs{\PP_x(X^n(t)\in A)-\pi^n(A)}$.
Hence, choosing $x=(r-\epsilon_n)$
and $A=[r,1-\gamma_0]$ we obtain $d(t)\geq \abs{\PP_{r-\epsilon_n}(X^n(t)\in [r,1-\gamma_0])-\pi^n([r,1-\gamma_0])}$. But, 
since $\pi^n\Rightarrow \delta_{r}$ as $n\to \infty$, we have for all sufficiently large $n$
that $\pi^n([r,1-\gamma_0])\geq \frac{1}{2}$. Hence,
\begin{align*}
    d(t)&\geq \frac{1}{2}-\PP_{r-\epsilon_n}(X^n(t)\in [r,1-\gamma_0])\\
    &\geq \frac{1}{2}-\PP_{r-\epsilon_n}(\tau\leq t).
\end{align*}
Choosing $t=\delta n^{1/2}$ with $\delta=1/8C$ in the above we obtain for all sufficiently large $n$
\begin{align*}
    d(\delta n^{1/2})&\geq \frac{1}{2}-\PP_{r-\epsilon_n}(\tau\leq \delta n^{1/2})\geq \frac{1}{2}-C\delta-n^{-3/2}=\frac{1}{2}-\frac{1}{8}-n^{-3/2}\geq \frac{1}{4}.
\end{align*}
This implies that $t_{\text{mix}}^n\geq \frac{n^{1/2}}{8C}=\Omega(n^{1/2})$.

To conclude this section, we have shown that $t_{mix}^n = \Omega(n^{1/2})$ and $t_{mix}^n= \mc O(n^{1/2})$. Therefore, it is true that for $\gamma_0=\gamma_1$, we have $t_{mix}^n = \Theta(n^{1/2})$.

This concludes the proof of Theorem \ref{thm:criticality}.

\section{Conclusion and Future Work}
\label{Sec:conclusion}
We studied the $2k$-choices dynamics in the presence of stubborn agents. We have shown that, as the system size increases, the stationary distribution concentrates on the majority state of the opinion with the larger fraction of stubborn followers. We demonstrate that the convergence to the stationary distribution relies completely on the stubborn fractions. When both fractions are \textit{small}, the system exhibits metastability, taking an exponentially long time to reach steady-state. When either the stubborn fractions are both \textit{large} or the difference between them is \textit{large}, the system converges rapidly to its stationary distribution. When the stubborn fractions lie on the phase transition boundary, convergence to the stationary distribution happens in polynomial time.

Future work should aim to study this process on alternative graph families. In our model, the agents are heterogeneous due to stubbornness. Hence, how the stubborn agents are distributed across different neighbourhoods matters, and would render the analysis much more complex. We would expect, however, that similar results will hold when each neighbourhood has a similar distribution of stubborn agents.

\bibliographystyle{unsrt}
\bibliography{opinion}

\newpage
\appendix
\input{appendix}

\end{document}

%% file: preamble.tex
\usepackage{amsmath}
\usepackage{amsthm}
\usepackage{amssymb}

\newcommand{\mc}[1]{\mathcal{#1}}

\newcommand{\brac}[1]{\left(#1\right)}

\newcommand{\sbrac}[1]{\left[#1\right]}

\newcommand{\abs}[1]{\left\lvert #1 \right\rvert}
\newcommand{\floor}[1]{\left\lfloor #1 \right\rfloor}
\newcommand{\ceil}[1]{\left\lceil #1 \right\rceil}

\newcommand{\indic}[1]{\mathbf{1}_{\brac{#1}}}

\theoremstyle{plain}
\newtheorem{theorem}{Theorem}
\newtheorem{lemma}[theorem]{Lemma}
\newtheorem{proposition}[theorem]{Proposition}

\theoremstyle{definition}
\newtheorem{definition}[theorem]{Definition}

\theoremstyle{remark}

\newcommand{\EE}{\mathbb{E}}

\newcommand{\NN}{\mathbb{N}}
\newcommand{\PP}{\mathbb{P}}
\newcommand{\RR}{\mathbb{R}}

\makeatletter
\newsavebox{\@brx}

\newcommand{\llangle}[1][]{%
  \savebox{\@brx}{\(\m@th{#1\langle}\)}%
  \mathopen{%
    \copy\@brx
    \mkern2mu
    \kern-0.9\wd\@brx
    \usebox{\@brx}%
  }%
}

\newcommand{\rrangle}[1][]{%
  \savebox{\@brx}{\(\m@th{#1\rangle}\)}%
  \mathclose{%
    \copy\@brx
    \mkern2mu
    \kern-0.9\wd\@brx
    \usebox{\@brx}%
  }%
}
\makeatother



%% file: appendix.tex
\section{Proof of Lemma 1}
\label{appendix_sec:lemma1}

%
The function $\Delta_k$ can also be written as
\begin{align}
    \Delta_k(x)&= (1-x-\gamma_0) I_x(k+1,k) - (x-\gamma_1)I_{1-x}(k+1,k),\nonumber
\end{align}
where 
\begin{align}
    I_x(\alpha, \beta) = \sum_{r=a}^{a+b-1}\binom{a+b-1}{r} x^r \left( 1-x\right)^{a+b-1-r}\nonumber,
\end{align}
is the regularized incomplete beta function with parameters $y \in [0,1]$ and $\alpha, \beta \in \mathbb{Z}^+$. This can also be written as $I_x(\alpha,\beta) = B(x;\alpha,\beta)/B(\alpha,\beta)$, where
\begin{align}
    B(x;\alpha,\beta) = \int_{0}^xt^{\alpha-1}(1-t)^{\beta-1} dt\nonumber,
\end{align}
with $B(\alpha,\beta) = B(1;\alpha,\beta)$, where $B(x;\alpha,\beta)$ is the incomplete beta function and $B(\alpha,\beta)$ is the complete beta function. The function $\Delta_k$ then has the following derivative properties:
\begin{enumerate}
    \item The first derivative is
        \begin{align}
            \frac{d}{dx}\Delta_k(x) = \binom{2k}{k}x^{k-1}(1-x)^{k-1}\left[ (2k+1)x(1-x) - k(\gamma_0 x + \gamma_1 (1-x))\right] - 1\nonumber.
        \end{align}
    \item The second derivative is
        \begin{align}
            \frac{d^2}{dx^2}\Delta_k(x) = k \binom{2k}{k}x^{k-2}(1-x)^{k-2}P_{\gamma_0,\gamma_1}(x),
            \label{eq:second_derivative}
        \end{align}
        where $P_{\gamma_0,\gamma_1}$ is a cubic polynomial given by
        \begin{align}
            \nonumber P_{\gamma_0,\gamma_1}(x) = (1-2x)&[(2k+1)x(1-x)-(k-1)(\gamma_0x + \gamma_1(1-x))]\\
            &- (\gamma_0-\gamma_1)x(1-x)\nonumber.
        \end{align}
\end{enumerate}
To prove the first statement, we need the derivative of the regularized incomplete beta function $I_x(a,b)$. It is known that 
\begin{align}
    \frac{d}{dx}I_x(a,b) = \frac{x^{a-1}(1-x)^{b-1}}{B(a,b)}\nonumber, 
\end{align}
where $B(a,b)$ is the complete beta function. Setting $a=k+1$ and $b=k$, we get
\begin{align}
    &\frac{d}{dx}I_x(k+1,k) = \frac{x^{k}(1-x)^{k-1}}{B(k+1,k)},\nonumber\\
    &\frac{d}{dx}I_{1-x}(k+1,k) = -\frac{(1-x)^{k}x^{k-1}}{B(k+1,k)}\nonumber.
\end{align}
Also, note that $B(k+1,k) = (k \binom{2k}{k})^{-1}$. Now the derivative of $\Delta_k$ is
\begin{align}
    \frac{d}{dx}\Delta_k(x) &= -I_x(k+1,k) + (1-x-\gamma_0) \frac{d}{dx}I_x(k+1,k)\nonumber\\
    & \qquad \qquad- I_{1-x}(k+1,k) - (x-\gamma_1)\frac{d}{dx}I_{1-x}(k+1,k)\nonumber\\
    & = -(I_x(k+1,k) + I_{1-x}(k+1,k))\nonumber\\
    & \qquad \qquad + \binom{2k}{k}\left[k (1-x-\gamma_0)x^k(1-x)^{k-1} + k(x-\gamma_1)x^{k-1}(1-x)^k)\right]\nonumber\\
    & = -(I_x(k+1,k) + I_{1-x}(k+1,k))\nonumber\\
    & \qquad \qquad + \binom{2k}{k}x^{k-1}(1-x)^{k-1}\left[k (1-x-\gamma_0)x + k(x-\gamma_1)(1-x)\right].\nonumber
\end{align}
It is true that $I_{1-x}(k+1,k) = 1 - I_x(k,k+1)$. Now we also need an expression for 
\begin{align}
    I_x(k+1,k) + I_{1-x}(k+1,k) & = 1 + I_x(k+1,k) -I_x(k,k+1)\nonumber\\
    & = 1 + k\binom{2k}{k}\int_0^x t^{k}(1-t)^{k-1} - t^{k-1}(1-t)^k dt\nonumber\\
    & = 1 + k\binom{2k}{k}\int_0^x t^{k-1}(1-t)^{k-1}(1-2t)dt\nonumber\\
    & = 1 - \binom{2k}{k}x^k(1-x)^k\nonumber.
\end{align}
Plugging this expression into the derivative equation, we get
\begin{align}
    \frac{d}{dx}\Delta_k(x) &= -1 + \binom{2k}{k}x^{k-1}(1-x)^{k-1}\left[k (1-x-\gamma_0)x + k(x-\gamma_1)(1-x) + x(1-x)\right]\nonumber\\
    & = \binom{2k}{k}x^{k-1}(1-x)^{k-1}\left[(2k+1)x(1-x) -k(\gamma_0x + \gamma_1(1-x))\right] - 1\nonumber\\
    & = \binom{2k}{k}x^{k-1}(1-x)^{k-1}M_{\gamma_0, \gamma_1}(x)-1.\nonumber
\end{align}
This establishes the first derivative result. We will need the derivative of the function $M_{\gamma_0, \gamma_1}$:
\begin{align}
    \frac{d}{dx}M_{\gamma_0, \gamma_1}(x) = (2k+1)(1-2x)+k(\gamma_1-\gamma_0)\nonumber.
\end{align}

The second derivative is then as follows
\begin{align}
    \nonumber \frac{d^2}{dx^2}\Delta_k(x) &= \binom{2k}{k}x^{k-2}(1-x)^{k-2}(k-1)(1-2x)M_{\gamma_0, \gamma_1}(x)\\
    & \hspace{5cm} + \binom{2k}{k}x^{k-1}(1-x)^{k-1}\frac{d}{dx}M_{\gamma_0, \gamma_1}(x)\nonumber\\
    & = \binom{2k}{k}x^{k-2}(1-x)^{k-2}\left( (k-1)(1-2x)M_{\gamma_0, \gamma_1}(x) + x(1-x)\frac{d}{dx}M_{\gamma_0, \gamma_1}(x) \right)\nonumber\\
    & \nonumber = \binom{2k}{k}x^{k-2}(1-x)^{k-2}\bigg[(k-1)(1-2x)\bigg( (2k+1)x(1-x) -k(\gamma_0x + \gamma_1(1-x))\bigg)\nonumber\\
    & \hspace{11.5em} + x(1-x)\bigg((2k+1)(1-2x)\bigg) + k(\gamma_1-\gamma_0)x(1-x)\bigg]\nonumber\\
    & \nonumber =  k\binom{2k}{k}x^{k-2}(1-x)^{k-2}\bigg[ (1-2x)((2k+1)x(1-x\nonumber)\\
    & \hspace{5cm}- (\gamma_0x + \gamma_1(1-x))) + k(\gamma_1-\gamma_0)x(1-x)\bigg]\nonumber\\
    & = k\binom{2k}{k}x^{k-2}(1-x)^{k-2}P_{\gamma_0, \gamma_1}(x)\nonumber.
\end{align}
 
    \subsection{Proof of the first statement}
    Plugging in $\gamma_1$ and $1-\gamma_0$ into $\Delta_k$ gives
    \begin{align}
        &\Delta_k(\gamma_1) = (1-\gamma_0-\gamma_1)I_{\gamma_1}(k+1,k) >0\nonumber\\
        &\Delta_k(1-\gamma_0) =-(1-\gamma_0-\gamma_1)I_{\gamma_0}(k+1,k) < 0\nonumber,
    \end{align}
    since $\gamma_0+\gamma_1 < 1$ and $I_x(k,k+1)$ is a strictly positive function for $x>0$. This also shows that $\Delta_k$ must have at least one real root in $[\gamma_1,1-\gamma_0]$. Below we show that the number of roots of $\Delta_k$ in $[\gamma_1,1-\gamma_0]$ is at most three.
    
    Note that to prove that $\Delta_k$ can have at most three real roots in $[\gamma_1, 1-\gamma_0]$, it is sufficient to prove that $\Delta_k''$ has at most one real root in $[\gamma_1, 1-\gamma_0]$. To see this, assume that $\Delta_k$ has at least four real roots in $[\gamma_1, 1-\gamma_0]$. This by Rolle's theorem implies that $f'_{2k}$ has at least three real roots in $[\gamma_1, 1-\gamma_0]$ which again by Rolle's theorem implies that $f''_{2k}$ has at least two real roots in $[\gamma_1, 1-\gamma_0]$ contradicting the fact $f''_{2k}$ has only one real root in $[\gamma_1, 1-\gamma_0]$.
    
    We now proceed to prove that $\Delta_k''$ has at most one real root in $[\gamma_1, 1-\gamma_0]$. Note from~\eqref{eq:second_derivative}  that to prove this result it is sufficient to prove that  the cubic polynomial $P_{\gamma_0,\gamma_1}$ which can be rewritten as
    \begin{align}
        P_{\gamma_0,\gamma_1}(x) &= (1-2x)[(2k+1)x(1-x)-(k-1)(\gamma_0x + \gamma_1(1-x))] - (\gamma_0-\gamma_1)x(1-x)\nonumber\\
        &= 2(2k+1)x^3 + [(2k-1)(\gamma_0-\gamma_1)-3(2k+1)]x^2\nonumber\\
        &  \quad \quad+((2k+1)-k\gamma_0+(3k-2)\gamma_1)x-(k-1)\gamma_1\nonumber,
    \end{align}
    has at most one real root in $[\gamma_1,1-\gamma_0]$.
    Note that the coefficient of $x^3$ is strictly positive, i.e. $2(2k+1) > 0$ for every $k \geq 1$. Hence, $\lim_{x \to -\infty} P_{\gamma_0,\gamma_1}(x) = - \infty$ and $\lim_{x \to \infty}P_{\gamma_0,\gamma_1}(x) = \infty$. Also note that $P_{\gamma_0,\gamma_1}$ can be written as $P_{\gamma_0,\gamma_1}(x)=x(1-x)((1-2x)G_{\gamma_0,\gamma_1}(x)-(\gamma_0-\gamma_1))$ where 
    \begin{equation}
        G_{\gamma_0,\gamma_1}(x)=(2k+1)-(k-1)\brac{\frac{\gamma_0}{1-x}+\frac{\gamma_1}{x}}.\nonumber
    \end{equation}
    For $x \in [\gamma_1, 1-\gamma_0]$ we have 
    \begin{align}
        G_{\gamma_0,\gamma_1}(x)\geq (2k+1)-(k-1)\brac{\frac{\gamma_0}{\gamma_0}+\frac{\gamma_1}{\gamma_1}}=3.\nonumber
    \end{align}
    Consider the case $\gamma_0<1/2$. Evaluating $P_{\gamma_0, \gamma_1}$ at $1-\gamma_0$ gives
    \begin{align}
        P_{\gamma_0,\gamma_1}(1-\gamma_0) &= \gamma_0(1-\gamma_0)(-(1-2\gamma_0)G_{\gamma_0, \gamma_1}(1-\gamma_0)-(\gamma_0 - \gamma_1))\nonumber\\
        & =- \gamma_0(1-\gamma_0)((1-2\gamma_0)G_{\gamma_0, \gamma_1}(1-\gamma_0)+(\gamma_0 - \gamma_1))\nonumber\\
        & \leq -\gamma_0(1-\gamma_0)(3(1-2\gamma_0) + \gamma_0-\gamma_1)\nonumber\\
        & = -\gamma_0(1-\gamma_0)(3 - 4\gamma_0 - (\gamma_0 + \gamma_1))\nonumber\\
        & < -\gamma_0(1-\gamma_0)(2-4\gamma_0)\nonumber\\
        & < 0\nonumber.
    \end{align}
    If $\gamma_1 < 1/2$, then
    \begin{align}
        P_{\gamma_0,\gamma_1}(\gamma_1) &= \gamma_1(1-\gamma_1)((1-2\gamma_1)G_{\gamma_0,\gamma_1}(\gamma_1) - (\gamma_0 - \gamma_1))\nonumber\\
        & \geq \gamma_1(1-\gamma_1)(3(1-2\gamma_1) - (\gamma_0 - \gamma_1))\nonumber\\
        & = \gamma_1(1-\gamma_1)(3 - 4\gamma_1 -(\gamma_0 + \gamma_1))\nonumber\\
        & > \gamma_1(1-\gamma_1)(2 - 4\gamma_1)\nonumber\\
        & > 0\nonumber.
    \end{align}
    Hence, for $\gamma_0,\gamma_1 < 1/2$, there is at least 1 root of the function in the interval $[\gamma_1, 1-\gamma_0]$, via the IVT. The IVT can also be applied to conclude there is at least 1 root in $(-\infty, \gamma_1)$, since $\lim_{x \to -\infty} P_{\gamma_0,\gamma_1}(x) = - \infty$ and $P_{\gamma_0,\gamma_1}(\gamma_1)>0$. The same can be concluded between $(1-\gamma_0, \infty)$, since $\lim_{x \to \infty} P_{\gamma_0,\gamma_1}(x) = \infty$ and $P_{\gamma_0,\gamma_1}(1-\gamma_0)<0$. Since a cubic has at most three roots, we conclude there is exactly one root in $[\gamma_1, 1-\gamma_0]$ when $\gamma_0, \gamma_1<1/2$.
    
    Alternatively, if $\gamma_1 > 1/2$, either $P_{\gamma_0,\gamma_1}(\gamma_1)>0$, $P_{\gamma_0,\gamma_1}(\gamma_1)=0$, or $P_{\gamma_0,\gamma_1}(\gamma_1)<0$. Note, we can show that 
    \begin{align}
        P_{\gamma_0, \gamma_1}(1/2) &= \frac{1}{4}(\gamma_1-\gamma_0)\nonumber\\
        &> 0\nonumber. 
    \end{align}
    If $P_{\gamma_0,\gamma_1}(\gamma_1)>0$, then IVT is used identically as before to conclude the existence of exactly 1 root in $[\gamma_1, 1-\gamma_0]$. If $P_{\gamma_0,\gamma_1}(\gamma_1)=0$, then $\gamma_1 $ is the root and IVT can be applied in $(-\infty, 1/2)$ and $(1-\gamma_0, \infty)$ to show that it is the only root in $[\gamma_1, 1-\gamma_0]$. Finally, if $P_{\gamma_0,\gamma_1}(\gamma_1)<0$, then since $\lim_{x \to -\infty}P_{\gamma_0, \gamma_1}(x)=-\infty$ and $P_{\gamma_0, \gamma_1}(1/2)>0$, then there is at least one root in $(-\infty,0)$ by IVT. Since $P_{\gamma_0, \gamma_1}(1/2) > 0$ and $P_{\gamma_0, \gamma_1}(\gamma_1) < 0$, there is at least one root in $(1/2, \gamma_1)$ via IVT. Finally, since $P_{\gamma_0, \gamma_1}(1-\gamma_0)$ and $\lim_{x \to \infty} P_{\gamma_0, \gamma_1}(x) = \infty$, then there is at least 1 root in $(1-\gamma_0, \infty)$. Hence there cannot be a root in $(\gamma_1, 1-\gamma_0)$. The proof for the case of $\gamma_1 < 1/2$ immediately follows since $P_{\gamma_0,\gamma_1}(x)=-P_{\gamma_1,\gamma_0}(1-x)$. This concludes the proof of the first statement. 
    
    \subsection{Proof of the second statement}
    Define the function $D_{2k}$ as
    \begin{align}
        D_{2k}(h) = \Delta_k(\gamma_1+h) +\Delta_k(1-\gamma_0-h),
    \end{align}
    From the definition of $\Delta_k$ it follows that
    \begin{align}
        D_{2k}(h) = (1-\gamma_0&-\gamma_1-h)I_{\gamma_1+h}(k+1,k) - hI_{1-\gamma_1-h}(k+1,k)\nonumber\\
        & + h I_{1-\gamma_0-h}(k+1,k) - (1-\gamma_0-\gamma_1-h)I_{\gamma_0+h}(k+1,k)\nonumber,
    \end{align}
    which simplifies to
    \begin{align}
        D_{2k}(h) = (1-\gamma_0&-\gamma_1-h)[I_{\gamma_1+h}(k+1,k) - I_{\gamma_0+h}(k+1,k)]\nonumber\\
        &+ h[I_{1-\gamma_0-h}(k+1,k) - I_{1-\gamma_1-h}(k+1,k)]\nonumber.
    \end{align}
    Since $I_{x}(a,b)$ is an increasing function of $x$ (the regularized incomplete beta function is just the CDF of a binomial distribution, and hence, is an increasing function of $x$), we can conclude that $D_{2k}(h)>0$ for all $h\in[0, 1-\gamma_0-\gamma_1]$ if $\gamma_1>\gamma_0$; $D_{2k}(h)<0$ for all $h\in[0, 1-\gamma_0-\gamma_1]$ if $\gamma_1< \gamma_0$; and $D_{2k}(h)=0$ for all $h \in [0,1-\gamma_0-\gamma_1]$ when $\gamma_0=\gamma_1$. Hence, for $\gamma_1>\gamma_0$, choosing $h=r_1-\gamma_1$ gives
    \begin{align}
        0< D_{2k}(r_1-\gamma_1) &= \Delta_k(r_1) + \Delta_k(1+\gamma_1-\gamma_0-r_1) = \Delta_k(1+\gamma_1-\gamma_0-r_1).
        \label{eq:positivity}
    \end{align}
    Now, since $\Delta_k(1-\gamma_0)<0$ and $r_2$ is the maximum root of $\Delta_k$ below $1-\gamma_0$ we must have $\Delta_k(y)< 0$ for all $y\in (r_2,1-\gamma_0]$. Hence, \eqref{eq:positivity} implies that $1+\gamma_1-\gamma_0-r < r_2$. This establishes the result for statement 2 when $\gamma_1>\gamma_0$. The proof follows similarly for $\gamma_1<\gamma_0$. For $\gamma_1=\gamma_2$ since $D_{2k}(h)=0$ for all $h\in [0,1-\gamma_0-\gamma_1]$ it follows that for every root $r \in [\gamma_1,1-\gamma_0]$ of $\Delta_k$ there is a root of $\Delta_k$ at $1-r$ (since $D_{2k}(r-\gamma_1)=\Delta_k(1-r)=0$). Hence, if $r_1$ is the smallest root in this interval then the largest root in the interval must be $1-r_1=r_2$.

\section{Proof of Proposition 3}
\label{appendix_sec:prop3}
    The phase transition boundary is parameterised by the values of $\gamma_0(w)$ and $\gamma_1(w)$ such that 
    \begin{align}
        \nonumber \Delta_k(w)=0 \quad \text{and} \quad \partial_{x}\Delta_k(x)\bigg |_{x=w} = 0.
    \end{align}
    Doing the above results in two simultaneous equations, linear in $\gamma_0$ and $\gamma_1$:
    \begin{align}
        &(1-\gamma_0-w)P_k(w) - (w-\gamma_1)P_k(1-w)=0 \label{eq: drift_equal_zero_parametric}\\
        & -(P_k(w) + P_k(1-w)) + (1-\gamma_0-w)P_k'(w) + (w-\gamma_1)P_k'(1-w)=0\label{eq: derivative_drift_equal_zero_parametric}.
    \end{align}
    Rearranging~\eqref{eq: derivative_drift_equal_zero_parametric} for $\gamma_0$ and plugging it into~\eqref{eq: drift_equal_zero_parametric}, we arrive at
    \begin{align}
        &\nonumber \gamma_1(w)=w - \frac{P_k(w)(P_k(w)+P_{k}(1-w))}{P'_{k}(1-w)P_k(w)+P_k'(w)P_k(1-w)}.
    \end{align}
    Plugging this expression back into ~\eqref{eq: drift_equal_zero_parametric}, we obtain an expression for $\gamma_0$:
    \begin{align}
        &\nonumber \gamma_0(w)= (1-w) - \frac{P_k(1-w)(P_k(w)+P_{k}(1-w))}{P'_{k}(1-w)P_k(w)+P_k'(w)P_k(1-w)}.
    \end{align}
    Hence, we have that $\gamma_1(w) = \gamma_0(1-w)$. It is true that $P_k(1-w) >P_k(w)$ for $w \in (0,1/2)$. Now consider the derivative of
    \begin{align}
        \nonumber \frac{d}{dt}[t(1-t)]^{k-1} = (k-1)[t(1-t)]^{k-2}(1-2t).
    \end{align}
    Hence, $\frac{d}{dt}[t(1-t)]^{k-1}>0$ for $t\in(0, 1/2)$. Therefore, for $t \leq w$, using this increasing property we have 
    \begin{align}
        \nonumber [t(1-t)]^{k-1} \leq [w(1-w)]^{k-1},
    \end{align}
    and therefore
    \begin{align}
        \nonumber t^k(1-t)^{k-1}\leq \frac{t}{w}w^k(1-w)^{k-1}.
    \end{align}
    Integrating both sides w.r.t $t$ gives
    \begin{align}
        \nonumber \int_0^w t^k(1-t)^{k-1}dt = \frac{w}{2}w^{k}(1-w)^{k-1}.
    \end{align}
    Therefore, using the definition of the regularised incomplete beta function and its derivative, we have that 
    \begin{align}
        \nonumber P_k(w) \leq \frac{w}{2}P_k'(w), 
    \end{align}
    for $w \in (0, 1/2)$. Now we can do the following
    \begin{align}
        \nonumber P_k(w)(P_k(w) + P_k(1-w)) &\leq P_k(w)(P_k(1-w) + P_k(1-w))\\
        &\nonumber = 2P_k(w)P_k(1-w)\\
        &\nonumber \leq wP_k(1-w)P_k'(w)\\
        &\nonumber > w(P_k(1-w)P_k'(w)+P_k(w)P_k'(1-w).
    \end{align}
    Hence we can conclude that
    \begin{align}
        \nonumber \gamma_1(w) &> w - \frac{w(P_k(1-w)P_k'(w)+P_k(w)P_k'(1-w)}{P_k(1-w)P_k'(w)+P_k(w)P_k'(1-w)}\\
        &\nonumber = 0,
    \end{align}
    for $w \in (0,1/2)$. Now we consider $w \to 0$. We have that $P_k(1-w)\to 0$, $P_k(w) = O(w^{k+1})$, and $P_k(1-w)P_k'(w)+P_k(w)P_k'(1-w) = O(w^k)$ as $w \to 0$. Plugging these expressions into $\gamma_0(w)$, we have that $\gamma_0(w) \to -\infty$ as $w \to 0$. Evaluating $\gamma_1(1/2)$, we will need
    \begin{align}
        \nonumber P_k\left(\frac{1}{2}\right) = \frac{1}{2}-\frac{\binom{2k}{k}}{2^{2k+1}}, \qquad P_k'\left(\frac{1}{2}\right) = \frac{k\binom{2k}{k}}{2^{2k-1}}.
    \end{align}
    Then we have
    \begin{align}
        \nonumber\gamma_1\left(\frac{1}{2}\right) &= \frac{1}{2} - \frac{P_k(1/2)(P_k(1/2)+P_{k}(1/2))}{P'_{k}(1/2)P_k(1/2)+P_k'(1/2)P_k(1/2)}\\
        &\nonumber = \frac{1}{2} - \frac{P_k(1/2)}{P'_k(1/2)}\\
        &\nonumber = \frac{1}{2} - \frac{\frac{1}{2}-\frac{\binom{2k}{k}}{2^{2k+1}}}{\frac{k\binom{2k}{k}}{2^{2k-1}}}\\
        &\nonumber = \frac{1}{2} - \frac{1}{4k} + \frac{2^{2k}}{4k\binom{2k}{k}}.
    \end{align}
    Therefore, $\gamma_0(1/2)$ also since $\gamma_1(w) = \gamma_0(1-w)$. Let $\gamma_0(1/2) = \gamma_1(1/2) = \gamma^*_k > 0$. Therefore, as $w$ moves from $1/2$ to $0$, the boundary eventually leaves the valid region $\gamma_0, \gamma_1 > 0$. Since $\gamma_1 > 0$, then it must be that it leaves on the $\gamma_0 = 0$ line. Hence, there exists a minimum value of $w$, i.e. $w_{\min}$ such that $\gamma_0(w_{\min})=0$. Then, we immediately have that $w_{\max} = 1-w_{\min}$, where $\gamma_1(w_{\max})=0$. This completes the proof of the proposition. \qed

\section{Proof of Lemma~\ref{lem:stein_symmetric}}
\label{sec:stein1}
    
For notional simplicity we define $\beta=\alpha/3, l=n^{1/3}\kappa_1, L=n^{1/3}\kappa_2$. Then, it is easy to verify that $z\mapsto\psi(z)$ defined by
\begin{equation}
    \psi(z)=\int_{z}^{L}\int_{-l}^v\exp\brac{-\beta(v^3-u^3)}du dv,
    \label{eq:stein_sol}
\end{equation}
solves~\eqref{eq:stein_ode} with the boundary conditions stated in the lemma. Furthermore, it is easy to see that $\psi\in C^3[-l, L]$ and $\psi\geq 0$. This proves the first property of the lemma.

Taking the derivative of~\eqref{eq:stein_sol} w.r.t $z$ on both sides, we obtain
\begin{equation}
    \psi'(z)=-\int_{-l}^{z}\exp\brac{-\beta(z^3-u^3)}du .
    \label{eq:1d}
\end{equation}
This immediately shows that $\psi'\leq 0$.

To prove the third property, it is sufficient to show that $\psi(-l)$ is bounded above by a constant independent of $l$ and $L$ since by the previous property we know that $\psi$ is non-increasing in the interval $[-l,L]$. Now, by definition we have
\begin{align*}
    \psi(-l)=\int_{-l}^{L}\int_{-l}^v\exp\brac{-\beta(v^3-u^3)}du dv\leq \int_{-\infty}^{\infty}\int_{-\infty}^v \exp\brac{-\beta(v^3-u^3)}du dv,
\end{align*}
where the inequality follows since the integrand is non-negative throughout the entire range of integration. We now show that the integral $I=\int_{-\infty}^{\infty}\int_{-\infty}^v \exp\brac{-\beta(v^3-u^3)}du dv$ is finite.
Substituting $v-u=s$ we obtain
\begin{align*}
    I&=\int_{-\infty}^{\infty}\int_{0}^\infty \exp\brac{-\beta(v^3-(v-s)^3)}ds dv\\
     &=\int_{-\infty}^{\infty}\int_{0}^\infty \exp\brac{-\beta(3v^2s-3vs^2+s^3)}ds dv\\
     &=\int_{0}^\infty \int_{-\infty}^{\infty}\exp\brac{-\beta\brac{3s\brac{v-\frac{s}{2}}^2+\frac{s^3}{4}}}dv ds\\
     &=\int_{0}^{\infty} \exp\brac{-\beta\frac{s^3}{4}}\int_{-\infty}^{\infty}\exp\brac{-3\beta s\brac{v-\frac{s}{2}}^2}dv ds\\
     &=\int_{0}^{\infty} \exp\brac{-\beta\frac{s^3}{4}}\sqrt{\frac{\pi}{3\beta s}}ds\\
     &=\frac{2^{1/3}\sqrt{\pi}}{3\sqrt{3}\beta^{2/3}}\int_{0}^\infty t^{-5/6}\exp(-t)dt\\
     &=\frac{2^{1/3}\sqrt{\pi}}{3\sqrt{3}\beta^{2/3}}\Gamma\brac{\frac{1}{6}}<\infty.
\end{align*}
This establishes the third property.

To establish the last property we note that $\psi'''(z)+2\alpha z\psi'(z)+\alpha z^2\psi''(z)=0$ and $\psi''(z)=-1-\alpha z^2\psi'(z)$. Combining the above two, we obtain
\begin{align}
    \psi'''(z)=\alpha z^2+\alpha^2z^4\psi'(z)-2\alpha z\psi'(z)&=\alpha z^2+(\alpha^2z^4-2\alpha z)\psi'(z)\nonumber\\
    &=\alpha z^2+(2\alpha z-\alpha^2z^4)\abs{\psi'(z)}.
    \label{eq:3d}
\end{align}
Thus, to bound $|\psi'''(z)|$ we first need to bound $|\psi'(z)|$.
Choose  a constant  $c>0$ sufficiently large such that $\alpha^2 z^4-2\alpha z\geq 0$ whenever $z\geq c$.
For $z<-c<0$ using~\eqref{eq:1d} we have
\begin{align}
\abs{\psi'(z)}&=\int_{-l}^{z}\exp\brac{-\beta(z^3-u^3)}du\nonumber\\
&\leq \int_{-\infty}^{z}\exp\brac{-\beta(z^3-u^3)}du\nonumber\\
&= \int_{-\infty}^{z}\exp\brac{-\beta(z-u)(z^2+zu+u^2)}du\nonumber\\
&\leq \int_{-\infty}^{z}\exp\brac{-3\beta z^2(z-u)}du\nonumber\\
&=\int_{0}^{\infty}\exp\brac{-3\beta z^2s}ds\nonumber\\
&=\frac{1}{3\beta z^2}=\frac{1}{\alpha z^2},
\label{eq:bound_left}
\end{align}
where the second inequality follows since for $u\leq z\leq 0$ we have $zu+u^2\geq 2z^2$. Similarly, for $z\in [-c,c]$ we have using~\eqref{eq:1d}
\begin{align}
|\psi'(z)|&=\exp(-\beta z^3)\int_{-l}^z\exp(\beta u^3)du\nonumber\\
&\leq \exp(\beta c^3)\int_{-\infty}^{c}\exp\brac{\beta u^3}du \nonumber\\
&\leq \exp(\beta c^3)\brac{\int_{-\infty}^{0}\exp\brac{\beta u^3}du+\int_0^c \exp\brac{\beta u^3}du}\nonumber\\
&\leq \frac{\exp(\beta c^3)}{3\beta^{1/3}}\int_0^{\infty}s^{-2/3}\exp(-s)ds+c\exp(2\beta c^3)\nonumber\\
&= K_1,
\label{eq:bound_middle}
\end{align}
where $K_1=\frac{\exp(\beta c^3)}{3\beta^{1/3}}\Gamma\brac{\frac{1}{3}}+c\exp(2\beta c^3)$.
For $z\geq c$ using~\eqref{eq:1d} we have
\begin{align}
\abs{\psi'(z)}&\leq\int_{-\infty}^{z}\exp\brac{-\beta(z^3-u^3)}du\nonumber\\
&=\exp(-\beta z^3)\int_{-\infty}^{0}\exp\brac{\beta u^3}du+\int_{0}^{z}\exp\brac{-\beta(z^3 - u^3)}du\nonumber\\
& \leq K_2 \exp(-\beta z^3) +\int_{0}^{z}\exp\brac{-\beta(z^3 - u^3)}du\nonumber\\
& \overset{(a)}{=} K_2 \exp(-\beta z^3) +\frac{1}{3\beta z^2}\int_0^{\beta z^3} \left(1-\frac{t}{\beta z^3}\right)^{-2/3}e^{-t}dt\nonumber\\
& \overset{(b)}{=} K_2 \exp(-\beta z^3) +\frac{1}{3\beta z^2}\int_0^{m} \left(1-\frac{t}{m}\right)^{-2/3}e^{-t}dt\nonumber\\
& = K_2 \exp(-\beta z^3) +\frac{1}{3\beta z^2}\left[\int_0^{m/2} \left(1-\frac{t}{m}\right)^{-2/3}e^{-t}dt + \int_{m/2}^{m} \left(1-\frac{t}{m}\right)^{-2/3}e^{-t}dt\right]\nonumber\\
& \overset{(c)}{\leq} K_2 \exp(-\beta z^3) +\frac{1}{3\beta z^2}\left[\int_0^{m/2} \left(1+\frac{3t}{m}\right)e^{-t}dt + \int_{m/2}^{m} \left(1-\frac{t}{m}\right)^{-2/3}e^{-t}dt\right]\nonumber\\
& \leq K_2 \exp(-\beta z^3) +\frac{1}{3\beta z^2}\left[ \left(1+\frac{3}{m}\right) + \int_{m/2}^{m} \left(1-\frac{t}{m}\right)^{-2/3}e^{-t}dt\right]\nonumber\\
& \leq K_2 \exp(-\beta z^3) +\frac{1}{3\beta z^2}\left[ \left(1+\frac{3}{m}\right) + e^{-\frac{m}{2}} \int_{m/2}^{m} \left(1-\frac{t}{m}\right)^{-2/3}dt\right]\nonumber\\
& = K_2 \exp(-\beta z^3) +\frac{1}{3\beta z^2}\left[ \left(1+\frac{3}{m}\right) +  \frac{3m}{2^{1/3}} e^{-\frac{m}{2}} \right]\nonumber\\
& = K_2 \exp(-\beta z^3) + \frac{1}{3\beta z^2} + \frac{1}{\beta^2 z^5} + 2^{-1/3}ze^{-\beta z^3 / 2}\nonumber\\
& = K_2 \exp(-\beta z^3) + \frac{1}{\alpha z^2} + \frac{1}{\beta^2 z^5} + 2^{-1/3}ze^{-\beta z^3 / 2},
\label{eq:bound_right}
\end{align}
where $K_2=\int_{-\infty}^{0}\exp\brac{\beta u^3}du = \frac{1}{3\beta^{1/3}} \Gamma(1/3)$, (a) follows by substituting $\beta(z^3-u^3)=t$,(b) follows from the letting $m=\beta z^3$, and (c) follows from the fact that $(1-x)^{-2/3} \leq 1+3x$ for $0 \leq x \leq 1/2$.
For $z\geq c$ we also obtain the following lower bound using~\eqref{eq:1d}
\begin{align}
\abs{\psi'(z)}&\geq\int_{0}^{z}\exp\brac{-\beta(z^3-u^3)}du\nonumber\\
&\geq \int_{0}^{z}\exp\brac{-3\beta z^2(z-u)}du\nonumber\\
&=\int_{0}^{z}\exp\brac{-3\beta z^2s}ds=\frac{1-\exp(-\alpha z^3)}{\alpha z^2}\nonumber,
\end{align}
where the second inequality follows
since $z^3-u^3=(z-u)(z^2+zu+u^2)\leq 3z^2(z-u)$ for $0\leq u\leq z$.

Equipped with the above bounds on $|\psi'(z)|$ we now bound $|\psi'''(z)|$.

For $-l\leq z< -c$ using~\eqref{eq:3d} we have $\psi'''(z)\leq \alpha z^2\leq n^{2/3}\alpha \kappa_1^2$ since $2\alpha z-\alpha^2 z^4<0$ for $z<-c<0$. For the same region, using~\eqref{eq:3d} and~\eqref{eq:bound_left} we  have $\psi'''(z)\geq 2/z$ implying $\abs{\psi'''(z)}=\max\{\psi'''(z),-\psi'''(z)\}\leq \max\{\kappa_1^2\alpha n^{2/3},2/|z|\}\leq \max\{\kappa_1^2\alpha n^{2/3},2/c\}\leq \kappa_1^2\alpha n^{2/3}$ for all sufficiently large $n$.

From~\eqref{eq:3d} and~\eqref{eq:bound_middle} we have $$\sup_{z\in[-c,c]}|\psi'''(z)| \leq \sup_{z\in[-c,c]}\alpha z^2+(2\alpha |z|+\alpha^2 z^4)|\psi'(z)|\leq \alpha c^2+(2\alpha c+\alpha^2 c^4) K_1\leq n^{2/3}\alpha \kappa_1^2,$$
for all sufficiently large $n$.

Finally, for $z> c$, using~\eqref{eq:3d} and~\eqref{eq:bound_right} we have
\begin{align*}
    \psi'''(z) &\geq \alpha z^2+(2\alpha z - \alpha^2 z^4)\left(K_2 \exp(-\beta z^3) + \frac{1}{\alpha z^2} + \frac{1}{\beta^2 z^5} + 2^{-1/3}ze^{-\beta z^3 / 2}\right)\\
    & = \frac{18}{\alpha z^4}-\frac{7}{z}+\alpha K_2z(2-\alpha z^3)\exp\left(-\frac{\alpha z^3}{3}\right) + 2^{-1/3}\alpha z^2\left(2-\alpha z^3\right)\exp\left(-\frac{\alpha z^3}{6}\right)\\
    &= \frac{18}{\alpha z^4}-\frac{7}{z} + \alpha z(2-\alpha z^3)\left( K_2 \exp\left(-\frac{\alpha z^3}{3}\right) + 2^{-1/3} z \exp\left(-\frac{\alpha z^3}{6}\right)\right)\\
    & \geq -\frac{7}{z} - (\alpha^2 z^4)\left( K_2 \exp\left(-\frac{\alpha z^3}{3}\right) + 2^{-1/3} z \exp\left(-\frac{\alpha z^3}{6}\right)\right),
\end{align*}
and
\begin{align}
\psi'''(z)\leq \alpha z^2+(2\alpha z-\alpha^2z^4)\frac{1-\exp(-\alpha z^3)}{\alpha z^2}\leq\frac{2}{z}+\alpha z^2\exp(-\alpha z^3)\nonumber.
\end{align}
Therefore, for suitably large $c>0$, for all $z\geq c$ and sufficiently large $n$ we have
\begin{align*}
\sup_{z\in [c,\kappa_2n^{1/3}]}|\psi'''(z)|&\leq \sup_{z\in [c,\kappa_2n^{1/3}]}\max \bigg \{\frac{2}{z}+\alpha z^2\exp(-\alpha z^3),\\
&\hspace{3.5cm}\frac{7}{z} + (\alpha^2 z^4)\left( K_2 \exp\left(-\frac{\alpha z^3}{3}\right) + 2^{-1/3} z \exp\left(-\frac{\alpha z^3}{6}\right)\right)\bigg\}\\
&=\max \bigg \{\frac{2}{c}+\alpha c^2\exp(-\alpha c^3),\\
&\hspace{3.5cm}\frac{7}{c} + (\alpha^2 c^4)\left( K_2 \exp\left(-\frac{\alpha c^3}{3}\right) + 2^{-1/3} c \exp\left(-\frac{\alpha c^3}{6}\right)\right)\bigg\}.\\
\end{align*}
This also concludes the second property of the lemma. Since the RHS of the above is a constant, for all sufficiently large $n$ we have $\sup_{z\in [c,\kappa_2n^{1/3}]}|\psi'''(z)|\leq \kappa_1^2\alpha n^{2/3}$.

Hence, combining the bounds for all regions we obtain 
$\sup_{z\in [-l,L]}|\psi'''(z)|\leq n^{2/3}\alpha \kappa_1^2$ for all sufficiently large $n$.
But at $z=-l$ we have $\psi'(-l)=0$. Hence, $\psi'''(-l)=\alpha l^2=n^{2/3}\alpha \kappa_1^2$. Hence, we have shown that the supremum is achieved and this completes the proof of the last statement.\qed

\section{Proof of Lemma~\ref{lem:stein2}}
\label{sec:stein2}

    It is easy to verify that $\psi$ as defined below satisfies~\eqref{eq:stein_ode2} with the stated boundary conditions.
\begin{align}
    \psi(z)=\int_z^0\int_{-\kappa n^{1/4}}^v\exp\brac{\frac{\alpha}{4}(v^4-u^4)}du dv\nonumber.
\end{align}
Clearly, we have
\begin{align}
    \psi'(z)=-\int_{-\kappa n^{1/4}}^z\exp\brac{\frac{\alpha}{4}(z^4-u^4)}du \leq 0\nonumber.
\end{align}
This verifies the second property.

The second property also implies that $\psi(z)$ takes its maximum value in the range $[-\kappa n^{1/4}, 0]$ when $z=-\kappa n^{1/4}$. Hence, to prove the third property it is sufficient to show that $\sup_{n}\psi(-\kappa n^{1/4})< \infty$. Now, we have
\begin{align*}
    \psi(-\kappa n^{1/4})&=\int_{-\kappa n^{1/4}}^0\int_{-\kappa n^{1/4}}^v\exp\brac{\frac{\alpha}{4}(v^4-u^4)}du dv\\
    &=\int_{-\kappa n^{1/4}}^0 \int_{u}^0 \exp\brac{\frac{\alpha}{4}(v^4-u^4)} dv du\\
    &\overset{(a)}{\leq} \int_{-\infty}^0 \int_{u}^0 \exp\brac{\frac{\alpha}{4}(v^4-u^4)} dv du\\
    &=\int_{-\infty}^0 \int_{u}^0 \exp\brac{\frac{\alpha}{4}(v-u)(v+u)(v^2+u^2)} dv du\\
    &\overset{(b)}{\leq} \int_{-\infty}^0 \int_{u}^0 \exp\brac{\frac{\alpha}{4}(v-u)u^3} dv du\\
    &=\int_{-\infty}^0 \int_{0}^{-u} \exp\brac{\frac{\alpha}{4}s u^3} ds  du\\
    &=\int_{-\infty}^0 \frac{4}{\alpha u^3}\brac{\exp\brac{-\frac{\alpha}{4} u^4}-1}du\\
    &=\int_{0}^{\infty} \frac{4}{\alpha u^3}\brac{1-\exp\brac{-\frac{\alpha}{4} u^4}}du\\
    &\leq \int_{0}^{1} \frac{4}{\alpha u^3}\brac{1-\exp\brac{-\frac{\alpha}{4} u^4}}du+ \int_{1}^{\infty} \frac{4}{\alpha u^3}du\\
    &\overset{(c)}{\leq} \int_{0}^{1} \frac{4}{\alpha u^3}\frac{\alpha}{4}u^4du+\frac{2}{\alpha}\\
    &=\frac{1}{2}+\frac{2}{\alpha},
\end{align*}
where (a) follows since the integrand is non-negative throughout the entire range of integration, (b) follows since $u+v\leq u\leq 0$ and $u^2+v^2\geq u^2\geq 0$ and $v-u\geq 0$, and (c) follows since $e^{-x}\geq 1-x$ for all $x\geq 0$. This establishes the third property of the lemma. 

To establish the final property of the lemma, note that $\psi'''(z) -3\alpha z^2\psi'(z)-\alpha z^3 \psi''(z)=0$ and $\psi''(z) = \alpha z^3 \psi'(z)-1$. Therefore, 

\begin{align}
    \psi'''(z) = -\alpha z^3 + (\alpha^2 z^6 + 3\alpha z^2) \psi'(z) = -\alpha z^3 - (\alpha^2 z^6 + 3\alpha z^2) |\psi'(z)|.\nonumber
\end{align}
It is clear that $\alpha^2 z^6 + 3\alpha z^2\geq0$ for any $z \in [-\kappa n^{1/4}, 0]$. Consider $|\psi'(z)|$ in the region $z \in [-\kappa n^{1/4}, -\epsilon]$. Then we have that
\begin{align*}
    |\psi'(z)| &= \int_{-\kappa n^{1/4}}^{z} \exp \left(\frac{\alpha}{4} (z^4-u^4)\right)du\\
    & \leq \int_{-\infty}^{z} \exp \left(\frac{\alpha}{4} (z^4-u^4)\right)du\\
    & = \int_{-\infty}^{z} \exp \left(\frac{\alpha}{4} (z^2-u^2)(z^2+u^2)\right)du\\
    & \leq \int_{-\infty}^{z} \exp \left(\frac{\alpha}{4} (z^2-u^2)(2z^2)\right)du\\
    & = \int_{-\infty}^{z} \exp \left(\frac{\alpha}{2} z^2(z-u)(z+u)\right)du\\
    & \leq \int_{-\infty}^{z} \exp \left(\frac{\alpha}{2} z^2(z-u)(2z)\right)du\\
    & = \int_{-\infty}^{z} \exp \left(\alpha z^3(z-u)\right)du\\
    & = \int_0^{\infty}\exp(\alpha z^3 s )ds\\
    & = - \frac{1}{\alpha z^3}.
\end{align*}
Therefore, for $z \in [-\kappa n^{1/4}, -\epsilon]$, we have 
\begin{align}
    \psi'''(z) \geq -\alpha z^3 - (\alpha^2 z^6 + 3\alpha z^2)\left(\frac{-1}{\alpha z^3}\right) = \frac{3}{z}.\nonumber
\end{align}
We also have
\begin{align}
    \psi'''(z) \leq - \alpha z^3.\nonumber
\end{align}
Therefore $3/z \leq \psi'''(z) \leq -\alpha z^3$. Hence, $|\psi'''(z)| \leq \max_{z \in [-\kappa n^{1/4}, -\epsilon]}\{\alpha |z|^3, 3/|z|\}$. For $n$, sufficiently large, we can conclude that $\sup_{z \in [-\kappa n^{1/4}, -\epsilon]}|\psi'''(z)| \leq \alpha \kappa^3 n^{3/4}$.

Now we focus on the region $z \in (-\epsilon,0]$. In this region we have that

\begin{align*}
    |\psi'(z)| &= \int_{-\kappa n^{1/4}}^{z} \exp \left(\frac{\alpha}{4} (z^4-u^4)\right)du\\
    & = \exp\left(\frac{\alpha}{4}z^4\right) \int_{-\kappa n^{1/4}}^{z}\exp\left(-\frac{\alpha}{4}u^4\right)du\\
    & \leq \exp\left(\frac{\alpha}{4}\epsilon^4\right) \int_{-\kappa n^{1/4}}^{z}\exp\left(-\frac{\alpha}{4}u^4\right)du\\
    & \leq \exp\left(\frac{\alpha}{4}\epsilon^4\right) \int_{-\infty}^{0}\exp\left(-\frac{\alpha}{4}u^4\right)du\\
    & = \frac{\Gamma(1/4) \exp\left(\frac{\alpha}{4}\epsilon^4\right)}{2\sqrt{2} \alpha^{1/4}}\\
    & = C_{\alpha, \epsilon}.
\end{align*}
Then, for $z \in (-\epsilon,0]$, we can write $\psi'''(z) \leq -\alpha z^3$ and 
\begin{align*}
    \psi'''(z) &\geq -\alpha z^3 - (\alpha^2 z^6 + 3\alpha z^2)C_{\alpha, \epsilon}\\
    & \geq -(\alpha^2 \epsilon^6+3\alpha \epsilon^2)C_{\alpha, \epsilon}\\
    & = -(\alpha^2 \epsilon^6+3\alpha \epsilon^2)\frac{\Gamma(1/4) \exp\left(\frac{\alpha}{4}\epsilon^4\right)}{2\sqrt{2} \alpha^{1/4}}.
\end{align*}
Bringing these 2 bounds together, we can conclude that for every $z \in (-\epsilon, 0]$ and sufficiently large $n$, we have that 
\begin{align}
    |\psi'''(z)| \leq \max \left \{\alpha\epsilon^3,  (\alpha^2 \epsilon^6+3\alpha \epsilon^2)\frac{\Gamma(1/4) \exp\left(\frac{\alpha}{4}\epsilon^4\right)}{2\sqrt{2} \alpha^{1/4}}\right\}.\nonumber
\end{align}
Then bringing together both regions we have that for all $z \in [-\kappa n^{1/4}, 0]$:
\begin{align*}
    |\psi'''(z)| \leq \max \left \{ \alpha \kappa^3 n^{3/4}, \frac{3}{\epsilon}, \alpha\epsilon^3,  (\alpha^2 \epsilon^6+3\alpha \epsilon^2)\frac{\Gamma(1/4) \exp\left(\frac{\alpha}{4}\epsilon^4\right)}{2\sqrt{2} \alpha^{1/4}}\right \},
\end{align*}
which for fixed $\alpha, \kappa, \epsilon > 0$ and sufficiently large $n$,  allows us to conclude that $|\psi'''(z)| \leq \alpha \kappa^3 n^{3/4}$. This concludes the final statement of the lemma. \qed